\providecommand{\U}[1]{\protect\rule{.1in}{.1in}}
\providecommand{\U}[1]{\protect\rule{.1in}{.1in}}
\providecommand{\U}[1]{\protect\rule{.1in}{.1in}}
\providecommand{\U}[1]{\protect\rule{.1in}{.1in}}
\providecommand{\U}[1]{\protect\rule{.1in}{.1in}}
\providecommand{\U}[1]{\protect\rule{.1in}{.1in}}
\providecommand{\U}[1]{\protect\rule{.1in}{.1in}}
\providecommand{\U}[1]{\protect\rule{.1in}{.1in}}
\newtheorem{theorem}{Theorem}[section]
\newtheorem{algorithm}[theorem]{Algorithm}
\newtheorem{corollary}[theorem]{Corollary}
\newtheorem{definition}[theorem]{Definition}
\newtheorem{example}[theorem]{Example}
\newtheorem{lemma}[theorem]{Lemma}
\newtheorem{proposition}[theorem]{Proposition}
\newtheorem{remark}[theorem]{Remark}
\begin{document}
\title[Non-symmetric Cauchy kernel and LPP]{Non symmetric Cauchy kernel,  crystals and last passage percolation}
\date{}
\author{Olga Azenhas, Thomas Gobet, Cédric Lecouvey}

\begin{abstract}
We use non-symmetric Cauchy kernel identities to get the law of last passage
percolation models in terms of Demazure characters. The construction is based
on some restrictions of the RSK correspondence that we rephrase in a unified
way which is compatible with crystal basis theory.

\end{abstract}
\maketitle

\tableofcontents

%\begin{thomas}
%Maybe it could make sense to add "crystals" in the title since here one cannot guess that there is a representation-theoretic part just from the title.
%\end{thomas}
%
%\begin{olga} It can be \emph{Non symmetric Cauchy kernel, crystals  and last passage percolation}.
%\end{olga}

\section{Introduction}

The Cauchy kernel identity is a classical corner stone in the theory of
symmetric functions and characters of the linear groups over the complex
field. Given two sets of indeterminates $X=\{x_{1},\ldots,x_{m}\}$ and
$Y=\{y_{1},\ldots,y_{n}\}$ it asserts that
\[
\prod_{i=1}^{m}\prod_{j=1}^{n}\frac{1}{1-x_{i}y_{j}}=\sum_{\lambda
\in\mathcal{P}_{\min(m,n)}}s_{\lambda}(X)s_{\lambda}(Y)
\]
where $\mathcal{P}_{\min(m,n)}$ is the set of partitions with at most
$\min(m,n)$ parts and, for each such partition $\lambda$, $s_{\lambda}(X)$ and
$s_{\lambda}(Y)$ are the Schur polynomials in the indeterminates $X$ and $Y$,
respectively.\ In fact the Schur functions $s_{\lambda}(X)$ and $s_{\lambda
}(Y)$ can be interpreted as the characters of the irreducible
finite-dimensional representations of highest weight $\lambda$ for the linear
Lie algebras $\mathfrak{gl}_{m}(\mathbb{C})$ and $\mathfrak{gl}_{n}%
(\mathbb{C})$. The aforementioned Cauchy identity can then be regarded as the
character of the $\mathfrak{gl}_{m}\times\mathfrak{gl}_{n}$ bi-module
$S(\mathbb{C}^{m}\times\mathbb{C}^{n})$ where $S(\mathbb{C}^{m}\times
\mathbb{C}^{n})$ is the symmetric tensor space associated to $\mathbb{C}%
^{m}\times\mathbb{C}^{n}$. This can be proved in a very elegant way (see
\cite{Fu,Sta}) by using the Robinson-Schensted-Knuth correspondence. Recall this
is a one-to-one map $\psi$ between the set $\mathcal{M}_{m,n}$ of matrices $M$
with $m$ rows, $n$ columns and entries in $\mathbb{Z}_{\geq0}$, and the pairs
$(P,Q)$ of semistandard tableaux both with the same shape $\lambda$ where $P$
and $Q$ have entries in $\{1,\ldots,m\}$ and $\{1,\ldots,n\}$, respectively.
The RSK correspondence has many interesting properties.\ In particular, for
each matrix $M$ in $\mathcal{M}_{m,n},$ the greatest integer which can be
obtained by summing up the entries in all the possible paths starting at
position $(1,n)$ and ending at position $(m,1)$ with steps $\longleftarrow$ or
$\downarrow$ coincides\footnote{We here consider the paths which are
compatible with the version of RSK that will be used in the paper.} with the
longest row in the tableaux $P,Q$ such that $\psi(M)=(P,Q)$. It is then
natural to study percolation models based on the RSK correspondence where
random matrices whose entries follow independent geometric laws are
considered (see \cite{BDS} for a recent exposition). This type of model has been deeply studied by Johansson in
\cite{joh}, who proved that the fluctuations of the previous last passage
percolation, once correctly normalized, are controlled by the Tracy-Widom
distribution (defined from the study of the largest eigenvalues of random
Hermitian matrices). The RSK correspondence admits various generalizations
which can also be used to get interesting last passage percolation models.
These models involve symmetric polynomials or generalizations of symmetric
polynomials, in particular characters of representations of Lie algebras other
than $\mathfrak{gl}_{n}$ (which are also symmetric with respect to the
associated Weyl group). We refer the reader to \cite{BZ} for a recent survey
and numerous new interesting results in this direction. In a connected area, the various Cauchy identities also yield rich random structures as those studied for instance in the recent papers \cite{BNT,MS, NPS}.

In this paper, we shall follow a different approach and consider percolation
models based on the non-symmetric Cauchy kernel as initially studied by
Lascoux in \cite{Las2}. It was also later considered in \cite{FL} just as computations on polynomials.\ This means that the ordinary Cauchy identity will be
replaced by its non-symmetric analogue%
\begin{equation}
\prod_{1\leq j\leq i\leq n}\frac{1}{1-x_{i}y_{j}}=\sum_{\mu\in\mathbb{Z}%
_{\geq0}^{n}}\overline{\kappa}^{\mu}(X)\kappa_{\mu}(Y) \label{NSCK}%
\end{equation}
where $\overline{\kappa}^{\mu}(X)$ and $\kappa_{\mu}(Y)$ are this time
Demazure atoms and Demazure characters (see \S \ \ref{Subsec_DemaModules}
below for complete definitions) in the indeterminates $X$ and $Y$ (with
$m=n$). It is important to emphasize here that these polynomials are not
symmetric in $X$ and $Y$. They only correspond to characters of
representations for subalgebras of the enveloping algebra $U(\mathfrak{gl}%
_{n})$. It was proved in \cite{Las2} that the identity (\ref{NSCK}) can be
obtained by restricting the RSK correspondence $\psi$ to the set of lower
triangular matrices\footnote{In fact, the convention of our paper differs from
that in \cite{Las2} which considers matrices with nonzero entries in positions
$(i,j)$ with $1\leq i+j\leq n+1$ rather than lower-triangular matrices.}%
.\ Since then, different other proofs have been proposed, in particular in
\cite{AE} (using the combinatorics of semi-skyline augmented fillings) and \cite{CK} (using
the combinatorics of crystal bases). We note  that recently in \cite{AS} an explicit tableau crystal on Mason’s semi-skyline augmented fillings \cite{Mas} has been developed using  the combinatorially equivalent objects, semi-standard key tableaux, introduced by the first author \cite{A}. The seminal paper \cite{Las2} also
established generalizations of the formula (\ref{NSCK}) where positions with
nonzero entries are authorized in the matrices outside their lower
part.\ These so-called extended staircase formulas (see \S \ \ref{Subsec_truncated} and \S \ \ref{Subsec_ASC}) were then obtained just by
computations on polynomials and thus not related to the RSK
correspondence.\ This connection was partially done in \cite{AO2} where other
truncated staircases formulas are also proved to be
compatible with the RSK correspondence using the combinatorics of semi-skyline augmented fillings \cite{Mas0,Mas} and Fomin's growth diagrams \cite{Fom,Sta}. This corresponds to the case where nonzero entries are authorized only in
positions $(i,j)$ with $n-p\leq i\leq j\leq q$, for $p$ and $q$
two nonnegative integers such that $n\geq q\geq p\geq1$.

 The goal of our paper is two-fold. First, we establish all the
existing variants of the non-symmetric Cauchy Kernel identities in the setting
of crystal basis theory and make it compatible with the RSK construction based
on bi-crystals. Recall here that crystals are oriented graphs which can be
interpreted as the combinatorial skeletons of irreducible finite-dimensional
representations of $\mathfrak{gl}_{n}$. We refer the reader to
\cite{BumpSchilling2017} and the references therein for a recent exposition.
Crystal bases were introduced by Lusztig (for any finite root system) \cite{Luca} and
Kashiwara (for classical root systems) \cite{kash0} in 1990. The graph structure arises
from the action of the so-called Kashiwara operators, which are certain
renormalizations of the Chevalley operators. It was later proved that crystals
coincide with Littelmann's graphs defined by using his path model \cite{Lit1}. Crystal
theory allows one to get an illuminating interpretation of the
RSK-correspondence and thus, in particular, of the Cauchy identity.\ A similar
interpretation was discovered by Choi and Kwon in \cite{CK} for the
non-symmetric case (\ref{NSCK}). Here we complete the picture with the
truncated and augmented staircase formulas.\ Our second objective is to use
the previous compatibility of the aforementioned map $\psi$ with the generalized Cauchy
identities to give the law of some last passage percolation models where
constraints are imposed on the locations of nonzero positions in the random
matrices considered.\ \ These laws will be expressed in terms of Demazure
characters and Demazure atoms and thus will have less symmetries than the
existing ones which rather use symmetric polynomials. There is nevertheless an
interesting intersection in the case $x_{i}=y_{i}$ for any $i=1,\ldots,n$.
Then, the identity (\ref{NSCK}) becomes symmetric and can be expanded in terms
of Schur functions by using an identity due to Littlewood (see \cite{CK}). This case yields a
last passage percolation model already studied (see \cite{BZ}). We emphasize that $x_i\neq y_i$ in our case, which explains why we need to consider Demazure characters, which are non-symmetric in general.

\bigskip

The paper is organized as follows. In Section 2, we recall the background on
representation theory of $\mathfrak{gl}_{n}$, the corresponding character
theory (its usual and Demazure versions) and its links with the Coxeter monoid and crystal basis
theory. Some key results for the purposes of this article are established here for which we did not find references in the literature. We also relate the RSK correspondence with bi-crystal structures and
interpret the Cauchy and non-symmetric Cauchy identities in this context. The
non-symmetric Cauchy identity is in particular obtained as the restriction of
the usual RSK to lower triangular matrices. The goal of Section 3 is to prove
that one can also get the truncated staircase Cauchy identity by restriction
of RSK to a relevant subset of matrices. To this end, we consider parabolic
restrictions of Demazure crystals and show that they admit a simple
combinatorial structure. {In particular, \S\ \ref{Subsec_ASC} is devoted to the extended staircase Cauchy
identity which is yet obtained by restriction of RSK.\ The idea here is to use
suitable adaptations of Demazure operators (defined on polynomials) acting on
crystals.\ It is also explained in \S\ \ref{SubsecSEmutilde} how the extended staircase result allows one
to rederive the truncated staircase identity by making more explicit its
formulation and connecting it to the approach proposed in \cite{AE,AO2}.\ Finally
in Section \ref{Secpercolation}, we use the previous combinatorial constructions to get the law
of various percolation models in terms of Demazure characters. In the Appendix \ref{Appe}, for the reader convenience, Coxeter monoids and Coxeter-theoretic techniques are given.

\bigskip
\noindent\textbf{MSC classification:} 05E05, 05E10, 60K35.

\noindent\textbf{Keywords:} Cauchy identity, Demazure characters, crystals, Coxeter monoids, percolation models.

\medskip

\noindent\textbf{Acknowledgments:}  O. A. is partially supported by the Center for Mathematics of the University of Coimbra - UIDB/00324/2020, funded by the Portuguese Government through FCT/MCTES. C. L. is partially supported by the Agence Nationale de la Recherche funding ANR CORTIPOM 21-CE40-001.

\section{Background on representations and characters of $\mathfrak{gl}_{n}$}

In this section, we review some classical results about representation theory
of the linear Lie algebra $\mathfrak{gl}_{n}=\mathfrak{gl}_{n}(\mathbb{C})$
over the field of complex numbers \cite{FH}. Firstly, recall the triangular decomposition
$\mathfrak{gl}_{n}=\mathfrak{gl}_{n}^{+}\oplus\mathfrak{h}\oplus
\mathfrak{gl}_{n}^{-}$ of $\mathfrak{gl}_{n}$ into its upper, diagonal and
lower parts.

\subsection{Representations and characters}

\label{Subsec_representation}Let $\mathcal{P}_{n}$ be the set of partitions
$\lambda=(\lambda_{1}\geq\cdots\geq\lambda_{n}\geq0)$ with at most $n$ parts.
A partition will be identified with its Young diagram written in French
convention (see Example \ref{Example_tableau}).\ The finite-dimensional
irreducible polynomial representations of $\mathfrak{gl}_{n}$ are parametrized
by the partitions in $\mathcal{P}_{n}$. To any $\lambda\in\mathcal{P}_{n}$, we denote by $V(\lambda)$ the corresponding finite-dimensional representation (or $\mathfrak{gl}_{n}$-module). By considering only the action of
the (commutative) Cartan subalgebra $\mathfrak{h}$ on $V(\lambda)$, one gets
the weight space decomposition%
\[
V(\lambda)=%
%TCIMACRO{\tbigoplus \limits_{\mu\in P}}%
%BeginExpansion
{\textstyle\bigoplus\limits_{\mu\in P}}
%EndExpansion
V(\lambda)_{\mu}%
\]
where the weight space $P=\mathbb{Z}_{\geq0}^{n}=\oplus_{i=1}^{n}%
\mathbb{Z}_{\geq0}{\mathbf{e}_{i}}$ is regarded as a subset of $\mathfrak{h}%
^{\ast}$ and for any $\mu\in P$%
\[
V(\lambda)_{\mu}=\{v\in V(\lambda)\mid h(v)=\mu(h)v\text{ for any }%
h\in\mathfrak{h}\}\text{.}%
\]
The symmetric group $\mathfrak{S}_{n}$ (which is the Weyl group of
$\mathfrak{gl}_{n}$) acts on $P$ by permuting the coordinates of the weights
and one then has $\dim V(\lambda)_{\mu}=\dim V(\lambda)_{\sigma(\mu)}$ for any
$\sigma\in\mathfrak{S}_{n}$ and any $\mu\in P$.\ The weight space
decomposition leads to the notion of \textsf{character} of $V(\lambda)$ which is the
polynomial in the indeterminates $x_{1},\ldots,x_{n}$ defined by
\[
s_{\lambda}=\sum_{\mu\in P}\dim V(\lambda)_{\mu}x^{\mu}%
\]
where for any $\mu=(\mu_{1},\ldots,\mu_{n})\in\mathbb{Z}_{\geq0}^{n}$ we use
the notation $x^{\mu}=x_{1}^{\mu_{1}}\cdots x_{n}^{\mu_{n}}$.\ By the previous
considerations, the polynomial $s_{\lambda}$ belongs in fact to the ring
$\mathrm{Sym}_{\mathbb{Z}}[x_{1},\ldots,x_{n}]$ of symmetric polynomials in
the indeterminates $x_{1},\ldots,x_{n}$ with coefficients in $\mathbb{Z}$.
This is the celebrated \textsf{Schur polynomial} which can also be obtained as the
quotient of two skew-symmetric polynomials using the formula%
\[
s_{\lambda}=\frac{\sum_{\sigma\in\mathfrak{S}_{n}}\varepsilon(\sigma
)x^{\sigma(\lambda+\rho)}}{\sum_{\sigma\in\mathfrak{S}_{n}}\varepsilon
(\sigma)x^{\sigma(\rho)}},
\]
{where $\rho=(n-1,n-2,\dots,1,0)$.}

\begin{remark}
Instead of considering the representation theory of $\mathfrak{gl}_{n}$, we
can proceed similarly with the representation theory of its enveloping algebra
$U(\mathfrak{gl}_{n})$.\ Simple finite-dimensional $U(\mathfrak{gl}_{n}%
)$-modules are still parametrized by the elements of $\mathcal{P}_{n}$ and we
will use the same notation for both representation theories.
\end{remark}

\subsection{Bruhat order and Coxeter monoid}

Recall that $\mathfrak{S}_{n}$ is generated by $S=\{s_{1},\ldots
,s_{n-1}\}$ where for any $i=1,\ldots,n-1$, $s_{i}$ is the simple transposition (or simple reflection)
flipping $i$ and $i+1$; this yields a realization of $\mathfrak{S}_n$ as a Coxeter group. We denote by $\ell(\sigma)$ the \textsf{length} of a permutation $\sigma\in \mathfrak{S}_n$, defined as the smallest integer $k\geq 0$ such that $\sigma=s_{i_{1}}\cdots s_{i_{k}}$, where the $s_{i_j}$'s are simple  reflections. A word of the form $s_{i_1} s_{i_2} \cdots s_{i_k}$ representing $\sigma\in\mathfrak{S}_n$ and such that all the $s_{i_j}$'s are simple reflections and $\ell(\sigma)=k$ is called a \textsf{reduced decomposition} of $\sigma$. We refer the reader to~\cite{BB} for basic statements on the symmetric group viewed as a Coxeter group.

The \textsf{(strong) Bruhat order} $\leq$ on $\mathfrak{S}_{n}$
can be defined by $\sigma^{\prime}\leq\sigma$ in $\mathfrak{S}_{n}$ if and
only if there is a reduced decomposition of $\sigma$ admitting a subword (not
necessarily made of consecutive letters) which is a reduced decomposition of
$\sigma^{\prime}$, if and only if every reduced decomposition of $\sigma$
admits a subword which is a reduced decomposition of $\sigma^{\prime}$
(see~\cite[Corollary 2.2.3]{BB}). The longest element of $\mathfrak{S}_{n}$ is
denoted by $\sigma_{0}$. Given any partition $\lambda$ in $\mathcal{P}_{n}$,
we denote by $\mathfrak{S}_{\lambda}$ its stabilizer under the action of
$\mathfrak{S}_{n}$.\ Each coset in $\mathfrak{S}_{n}/\mathfrak{S}_{\lambda}$
contains a unique element of minimal length and the set of elements of minimal
length is denoted by $\mathfrak{S}_{n}^{\lambda}$.\ Then each $\sigma
\in\mathfrak{S}_{n}$ admits a unique decomposition of the form $\sigma=uv$
with $v\in\mathfrak{S}_{\lambda},$ $u\in\mathfrak{S}_{n}^{\lambda}$ and
$\ell(\sigma)=\ell(u)+\ell(v)$. One then has a one-to-one correspondence
between the elements of $\mathfrak{S}_n^{\lambda}$ and the $\mathfrak{S}_{n}%
$-orbit of $\lambda$ which we denote by $\mathfrak{S}_{n}{\lambda}$.

%\begin{olga}
%I suggest $\mathfrak{S}_{n}^{\lambda}$. Later the subindex is important.
%\end{olga}

The elementary bubble sort operator $\pi_{i}$, $1\leq i<n$, on the
weak composition $\alpha=(\alpha_1,\alpha_2,$ $\dots,\alpha_n)$ $\in \mathbb{Z}^n_{\ge 0}$, sorts the entries in positions $i$ and $i+1$ by
swapping $\alpha_{i}$ and $\alpha_{i+1}$ if $\alpha_{i}>\alpha_{i+1}$, and
fixing $\alpha$ otherwise, namely,
\begin{equation}
	\pi_{i}(\alpha)=\left\{
	\begin{array}
		[c]{c}%
		s_{i}\alpha\text{ if }~\alpha_{i}>\alpha_{i+1}\\
		\alpha~\text{if}~\alpha_{i}\leq\alpha_{i+1}.
	\end{array}
	\right.  \label{Bubble}%
\end{equation}
Thus elementary bubble sort operators $\pi_{i}$, $1\leq i<n$, satisfy the
relations%
\begin{equation}
\pi_{i}^{2}=\pi_{i}\;(i=1,\dots,n),\;\pi_{i}\pi_{i+1}\pi_{i}=\pi_{i+1}\pi
_{i}\pi_{i+1}\;(i=1,\dots,n-1),\;\pi_{i}\pi_{j}=\pi_{j}\pi_{i},\;(|i-j|>1).
\label{bubblesort}%
\end{equation}

It follows from Matsumoto's Lemma \cite{Mat, BumpSchilling2017} that for every
$w\in\mathfrak{S}_{n}$, we may write $\pi_{w}$ to mean $\pi_{i_{1}}\pi_{i_{2}%
}\cdots\pi_{i_{k}}$, whenever $s_{i_{1}}s_{i_{2}}\cdots s_{i_{k}}$ is a
reduced word of $w$ in $\mathfrak{S}_{n}$. Later we will see that the above set of relations define the so-called \textsf{Coxeter monoid} $\mathfrak{M}_{n}$ \cite{RS} (see Section~\ref{SubsecPR} )}.

%\begin{olga} I added a reference for Matsumoto's Lemma \end{olga}
\begin{lemma}\label{lem_cox_mon_1}
Let $\lambda\in \mathcal{P}_n$, $w\in \mathfrak{S}_n$, and let $\mu=w\lambda$.
\begin{enumerate}
	\item Let $t=(i ~j)$ be a transposition in $\mathfrak{S}_n$ with $i<j$. If $\mu_i < \mu_j$, then $\ell(tw)<\ell(w)$.
	\item If $s_{i_1} s_{i_2} \cdots s_{i_k}$ is any reduced decomposition of $w$, then $w\lambda= \pi_{i_1} \pi_{i_2} \cdots \pi_{i_k}(\lambda)=\pi_w(\lambda)$.
\end{enumerate}
\end{lemma}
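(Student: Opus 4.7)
The plan is to handle (1) first using a standard characterization of descents for left multiplication on the Bruhat order, and then derive (2) by a short induction on $\ell(w)$ in which the only non-trivial check is precisely what (1) provides.

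For (1), I would unwind the action of $w$: since $\mu=w\lambda$ means $\mu_{k}=\lambda_{w^{-1}(k)}$, the hypothesis $\mu_{i}<\mu_{j}$ with $i<j$ becomes $\lambda_{w^{-1}(i)}<\lambda_{w^{-1}(j)}$. Because $\lambda$ is weakly decreasing, this forces $w^{-1}(i)>w^{-1}(j)$, i.e.\ $j$ occurs to the left of $i$ in the one-line notation of $w$. This is exactly the classical criterion (see e.g.\ \cite{BB}) guaranteeing that the transposition $t=(i~j)$ is a left descent of $w$, hence $\ell(tw)=\ell(w)-1<\ell(w)$. So (1) amounts to unpacking the definition and invoking a standard fact about the Bruhat order.

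For (2), I would proceed by induction on $k=\ell(w)$. The case $k=0$ is immediate. For the inductive step, set $w'=s_{i_{2}}\cdots s_{i_{k}}$, which is a reduced decomposition, and put $\nu=w'\lambda$. By induction, $\nu=\pi_{i_{2}}\cdots\pi_{i_{k}}(\lambda)$, so it remains to show $s_{i_{1}}\nu=\pi_{i_{1}}(\nu)$. Comparing with the definition \eqref{Bubble}, this reduces to verifying that $\nu_{i_{1}}\geq \nu_{i_{1}+1}$: if $\nu_{i_{1}}>\nu_{i_{1}+1}$ then $\pi_{i_{1}}(\nu)=s_{i_{1}}\nu$ by definition, and if $\nu_{i_{1}}=\nu_{i_{1}+1}$ then both $\pi_{i_{1}}$ and $s_{i_{1}}$ fix $\nu$. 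To rule out the case $\nu_{i_{1}}<\nu_{i_{1}+1}$, I would argue by contradiction: applying (1) to the permutation $w'$, the weight $\nu=w'\lambda$, and the simple transposition $t=s_{i_{1}}$ would give $\ell(s_{i_{1}}w')<\ell(w')=k-1$, contradicting $\ell(s_{i_{1}}w')=\ell(w)=k$.

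The only potentially delicate point is the use of the Bruhat-order criterion in (1); once that is granted, (2) follows formally. Since this characterization is standard for the symmetric group as a Coxeter group, I do not expect a real obstacle — the lemma serves mainly to record the compatibility between Weyl-group action on dominant weights and the monoid action $\pi_{w}$ of bubble-sort operators, which will be used throughout the paper.
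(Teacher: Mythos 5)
Your proof is correct. For part (2) your argument coincides with the paper's: induct on $k$, set $\nu=s_{i_2}\cdots s_{i_k}\lambda$, and use part (1) to rule out $\nu_{i_1}<\nu_{i_1+1}$ (which would force $\ell(s_{i_1}s_{i_2}\cdots s_{i_k})=k-1$, contradicting reducedness); in the remaining cases $\nu_{i_1}>\nu_{i_1+1}$ and $\nu_{i_1}=\nu_{i_1+1}$ one has $s_{i_1}\nu=\pi_{i_1}(\nu)$ either by definition of $\pi_{i_1}$ or because both operators fix $\nu$. For part (1), however, you take a genuinely different and shorter route. The paper argues by induction on $\ell(w)$: writing $w=s_ku$ with $\ell(w)=\ell(u)+1$, it invokes the identity $N(uv)=N(u)\,\Delta\,uN(v)u^{-1}$ for left inversion sets to reduce the claim about $t$ and $w$ to the analogous claim about $s_kts_k$ and $u$, followed by a case analysis on how $s_k$ meets the support of $t$. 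You instead unwind the action directly: $\mu_i=\lambda_{w^{-1}(i)}$, so $\mu_i<\mu_j$ combined with $\lambda$ weakly decreasing forces $w^{-1}(i)>w^{-1}(j)$, which is exactly the one-line-notation description of the reflections $t=(i\,j)$ with $\ell(tw)<\ell(w)$, i.e.\ the explicit form of the left inversion set $N(w)$ in type $A$. Your version is more elementary and avoids the induction and case analysis entirely; it is specific to the symmetric group, but the statement (and the paper's own proof, which also manipulates coordinates of $\mu$) is equally type-$A$-specific, so nothing is lost. The paper's recursion-based argument is closer in form to what one would write for a general Coxeter group, which is presumably why the authors chose it, but for the lemma as stated your direct appeal to the standard inversion criterion is entirely adequate.
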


\begin{proof}
Recall that, given an element $w\in \mathfrak{S}_n$, the set $N(w)=\{t\in T~|~\ell(tw)<\ell(w)\}$ is the set of (left) inversions of $w$; it satisfies $|N(w)|=\ell(w)$ and for all $u,v\in \mathfrak{S}_n$, we have the equality (see for instance~\cite[Chapter 1, Exercise 12]{BB}) \begin{equation}\label{n_function} N(uv)=N(u)\Delta u N(v) u^{-1},\end{equation}
where $\Delta$ denotes the symmetric difference (note that, in particular, the product $uv$ does not need to be reduced).

The proof of the first point is by induction on $\ell(w)$. If $\ell(w)=0$, then $w=1$ and the set of transpositions $t$ such that $\ell(tw)<\ell(w)$ is empty. We have $\mu=\lambda$ and $\lambda_i \geq \lambda_j$ for all $j>i$ in this case. Hence assume that $\ell(w)>0$. Let $s=s_k$ be a simple transposition such that $w=s_k u$ and $\ell(w)=\ell(u)+1$. If $s_k=t$, then $i=k$, $j=k+1$, and $\ell(tw)<\ell(w)$, hence we are done. We can thus assume that $t\neq s$. Using~\eqref{n_function} above we have $N(w)=N(su)=N(s)\Delta s N(u) s= \{s\} \Delta s N(u)s $ and using the fact that $s\neq t$, we deduce the equivalence $$\ell(tw)<\ell(w) \Leftrightarrow \ell(stsu)<\ell(u),$$ denoting $sts=(i' ~j')$, by induction it suffices to show that $\nu_{i'}<\nu_{j'}$, where $\nu=u \lambda$. We can assume that $sts\neq t$, otherwise the supports of $s$ and $t$ are disjoint, hence $i'=i$, $j'=j$, and $\nu_i=\mu_i$, $\nu_j=\mu_j$. We can thus assume that $s\in\{s_{i-1}, s_i, s_{j-1}, s_{j}\}$. We treat the case where $s=s_j$, the other cases are similar. We have $sts=(i ~j+1)$, and we have $\nu_j=\mu_{j+1}$, $\nu_{j+1}=\mu_j$. On the other hand, since $i$ is not in the support of $(j~j+1)$, we have $\nu_i=\mu_i$. Hence $\nu_i=\mu_i < \mu_j=\nu_{j+1}$, which by induction yields $\ell(stsu)<\ell(u)$.

Let us prove the second point. We argue by induction on $k$. If $k=0$ then there is nothing to prove. Assume that $k\geq 1$. By induction we have that $s_{i_2} \cdots s_{i_k}\lambda=\pi_{i_2} \cdots \pi_{i_k}(\lambda)$. Now writing $\mu=s_{i_2} \cdots s_{i_k}\lambda$ and $i=i_1$, by the first point we have that $\mu_i \geq \mu_{i+1}$, otherwise we would have $\ell(s_{i_1} s_{i_2}\cdots s_{i_k})= k-1$, contradicting the fact that $s_{i_1} s_{i_2}\cdots s_{i_k}$ is reduced. It follows that $s_{i_1}\mu = \pi_{i_1}(\mu)$, hence that $w\lambda= \pi_{i_1} \pi_{i_2} \cdots \pi_{i_k}(\lambda)$, as required.
\end{proof}

\begin{lemma} Consider the set $\mathfrak{S}_n \lambda$, which is in bijection with $\mathfrak{S}_n^\lambda$ through $w \lambda \mapsto w^\lambda$, where $w^\lambda$ is the representative of minimal length of $w \mathfrak{S}_\lambda$. Then the transitive closure of the relations \[
	\mu<t\mu
	,\;\mbox{if $\mu_i>\mu_{j}$, $i<j$, $t$ is the transposition $(i\,j)\in \mathfrak{S}_n $  and $\mu=(\mu_1,\ldots,\mu_n)\in
		\mathfrak{S}_n\lambda$}
	\]
	yields a partial order on $\mathfrak{S}_n \lambda$, which coincides through the aforementioned bijection with the restriction of the strong Bruhat order on $\mathfrak{S}_n$ to $\mathfrak{S}_n^\lambda$.
\end{lemma}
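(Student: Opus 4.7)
The plan is to introduce the inverse bijection $f : \mathfrak{S}_n\lambda \to \mathfrak{S}_n^\lambda$, $\mu \mapsto w_\mu^\lambda$, and to establish the equivalence $\mu \preceq \nu \Leftrightarrow f(\mu) \leq f(\nu)$, where $\preceq$ denotes the transitive closure of the given covering relations and $\leq$ is the strong Bruhat order. Once both implications are in place, antisymmetry of $\preceq$ follows from antisymmetry of Bruhat via $f$, so $\preceq$ is indeed a partial order and agrees with the Bruhat order under the bijection.

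For the forward direction, it suffices to handle a single generating step $\mu \prec t\mu$ with $t = (i\,j)$, $i < j$, and $\mu_i > \mu_j$. Setting $w = f(\mu)$, the weight $t\mu = tw\lambda$ has entries $(t\mu)_i = \mu_j < \mu_i = (t\mu)_j$, so applying Lemma~\ref{lem_cox_mon_1}(1) with $tw$ (and weight $t\mu$) in place of $w$ (and $\mu$) yields $\ell(w) < \ell(tw)$. By the classical equivalence $u < tu$ in Bruhat iff $\ell(tu) > \ell(u)$ for any reflection $t$ (see \cite{BB}), we have $w < tw$ in Bruhat on $\mathfrak{S}_n$; then by compatibility of Bruhat order with the parabolic quotient $\mathfrak{S}_n \to \mathfrak{S}_n/\mathfrak{S}_\lambda$ (also standard, see \cite{BB}), the minimal representatives satisfy $f(\mu) = w^\lambda \leq (tw)^\lambda = f(t\mu)$.

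For the reverse direction, suppose $f(\mu) \leq f(\nu)$ in Bruhat on $\mathfrak{S}_n$. Choose a saturated Bruhat chain $f(\mu) = u_0 \lessdot u_1 \lessdot \cdots \lessdot u_k = f(\nu)$ in $\mathfrak{S}_n$, with each cover $u_{l+1} = t_l u_l$ given by a reflection $t_l = (a_l\,b_l)$, $a_l < b_l$, and $\ell(u_{l+1}) = \ell(u_l) + 1$. Writing $\mu^{(l)} = u_l \lambda$, so that $\mu^{(0)} = \mu$ and $\mu^{(k)} = \nu$, the contrapositive of Lemma~\ref{lem_cox_mon_1}(1) applied at each step forces $\mu^{(l)}_{a_l} \geq \mu^{(l)}_{b_l}$. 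A strict inequality gives $\mu^{(l)} \prec t_l \mu^{(l)} = \mu^{(l+1)}$, while equality gives $\mu^{(l+1)} = \mu^{(l)}$; in either case $\mu^{(l)} \preceq \mu^{(l+1)}$, and chaining produces $\mu \preceq \nu$. Note that the intermediate $u_l$ need not lie in $\mathfrak{S}_n^\lambda$, which is harmless.

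The main point requiring care is the forward direction, which rests on two classical results about Bruhat order — the reflection-length characterization $u < tu \Leftrightarrow \ell(tu) > \ell(u)$ for arbitrary (not necessarily simple) reflections $t$, and the fact that the minimal-representative map $w \mapsto w^\lambda$ is order-preserving — both standard (cf.~\cite{BB}) and easily cited. With both implications established, antisymmetry of $\preceq$ on $\mathfrak{S}_n\lambda$ follows from antisymmetry of Bruhat via $f$, making $\preceq$ a partial order, and the two orders coincide through $w\lambda \mapsto w^\lambda$ as claimed.
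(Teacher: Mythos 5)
Your proposal is correct and follows essentially the same route as the paper: both directions hinge on Lemma~\ref{lem_cox_mon_1}(1), applied to $t\mu$ for the forward implication to get $\ell(w)=\ell(ttw)<\ell(tw)$, and in contrapositive form along a chain of reflections for the reverse implication, with the order-preservation of $w\mapsto w^\lambda$ closing the argument. The only cosmetic difference is that you work with a saturated Bruhat chain where the paper uses an arbitrary chain of transpositions; this changes nothing of substance.
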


\begin{proof}
Assume that $\mu < t \mu$, and let $w\in\mathfrak{S}_n$ such that $\mu=w\lambda$. Denoting $\mu'=tw\lambda=t \mu$, we have $\mu_i' < \mu_j'$. By point 1 of Lemma~\ref{lem_cox_mon_1}, we have $\ell(w)=\ell(ttw)< \ell(tw)$, which shows that $w < tw$ in the strong Bruhat order. It follows that $w^\lambda < (tw)^\lambda$.

Conversely, let $u,v\in \mathfrak{S}_n^\lambda$ such that $u \leq v$. By definition of the strong Bruhat order, there is a sequence $t_1, t_2, \dots, t_k$ of transpositions such that $u < t_1 u < t_2 t_1 u < \cdots < t_k t_{k-1} \cdots t_2 t_1 u$. Note that the elements in this sequence are in $\mathfrak{S}_n$ but, apart from $u$ and $v$, not necessarily in $\mathfrak{S}_n^\lambda$. To conclude the proof it therefore suffices to show that, if $u < tu$ with $u\in\mathfrak{S}_n$, $t\in T$, then $u \lambda \leq tu \lambda$. Letting $\mu= u \lambda$, if $\mu_i < \mu_j$, then by the first point of Lemma~\ref{lem_cox_mon_1} we have $\ell(tu) < \ell(u)$, contradicting $u < tu$. Hence $\mu_i \geq \mu_j$. If $\mu_i > \mu_j$ then we have $\mu < t \mu$. If $\mu_i=\mu_j$, we have $u \lambda= t u\lambda$. This concludes the proof.
\end{proof}

\begin{lemma}\label{lem_equiv_mu_weyl}
Let $\lambda\in \mathcal{P}_n$ and let $\sigma\in\mathfrak{S}_n^\lambda$. Let $\mu=\sigma \lambda$ and $s_i$ a simple reflection of $\mathfrak{S}_n$. We have the equivalences
\begin{gather*}
	\mu_{i}>\mu_{i+1}\text{ iff }\ell(s_{i}\sigma)=\ell(\sigma)+1\text{ and }%
	s_{i}\sigma\in\mathfrak{S}_{n}^{\lambda},\\
	\mu_{i}=\mu_{i+1}\text{ iff }s_{i}\sigma\notin\mathfrak{S}_{n}^{\lambda
	}~(\text{in which case we must have }\ell(s_{i}\sigma)=\ell(\sigma)+1),\\
	\mu_{i}<\mu_{i+1}\text{ iff }\ell(s_{i}\sigma)=\ell(\sigma)-1~(\text{in which
		case we must have }s_{i}\sigma\in\mathfrak{S}_{n}^{\lambda}).
\end{gather*}
\end{lemma}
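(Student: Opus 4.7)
The plan is to exploit the trichotomy: the three alternatives on the left-hand side ($\mu_i>\mu_{i+1}$, $\mu_i=\mu_{i+1}$, $\mu_i<\mu_{i+1}$) are mutually exclusive and exhaustive, and so are the three right-hand conditions once we read the parenthetical length statements as constraints. Hence it suffices to establish each implication from left to right.

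I would first dispose of the equality case. If $\mu_i=\mu_{i+1}$, then $s_i\mu=\mu$, i.e.\ $\sigma^{-1}s_i\sigma$ fixes $\lambda$ and therefore lies in $\mathrm{Stab}(\lambda)=\mathfrak{S}_\lambda$. Consequently $s_i\sigma$ lies in the same coset $\sigma\mathfrak{S}_\lambda$ as $\sigma$. Since $\sigma$ is by assumption the unique element of minimal length in this coset and $s_i\sigma\neq\sigma$, we deduce both that $s_i\sigma\notin\mathfrak{S}_n^\lambda$ and that $\ell(s_i\sigma)>\ell(\sigma)$; because they differ by the simple reflection $s_i$, the latter gives $\ell(s_i\sigma)=\ell(\sigma)+1$.

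For the two strict cases, the length equalities drop out of Lemma~\ref{lem_cox_mon_1}(1). In the case $\mu_i<\mu_{i+1}$, I apply the lemma directly with $w=\sigma$ and $t=s_i$ to obtain $\ell(s_i\sigma)<\ell(\sigma)$, hence $\ell(s_i\sigma)=\ell(\sigma)-1$. In the case $\mu_i>\mu_{i+1}$, I instead set $w=s_i\sigma$, observe that $w\lambda=s_i\mu$ satisfies $(s_i\mu)_i=\mu_{i+1}<\mu_i=(s_i\mu)_{i+1}$, and apply the lemma with $t=s_i$ to get $\ell(\sigma)=\ell(s_iw)<\ell(w)=\ell(s_i\sigma)$, yielding $\ell(s_i\sigma)=\ell(\sigma)+1$.

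It remains to establish coset membership $s_i\sigma\in\mathfrak{S}_n^\lambda$ in both strict cases, and the clean way is to prove the contrapositive of the equality case: \emph{if $s_i\sigma\notin\mathfrak{S}_n^\lambda$, then $\mu_i=\mu_{i+1}$}. For this I invoke the standard descent characterization of minimal length coset representatives (a basic fact from~\cite{BB} following from the existence and uniqueness of $u\in\mathfrak{S}_n^\lambda$ in the decomposition $\sigma=uv$): $\tau\in\mathfrak{S}_n^\lambda$ if and only if $\tau(j)<\tau(j+1)$ for every $j$ with $\lambda_j=\lambda_{j+1}$. If $s_i\sigma$ violates this, pick such a $j$ with $(s_i\sigma)(j)>(s_i\sigma)(j+1)$; since $\sigma\in\mathfrak{S}_n^\lambda$ forces $\sigma(j)<\sigma(j+1)$, the only way for $s_i$ to reverse this inequality is $\sigma(j)=i$, $\sigma(j+1)=i+1$. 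Then $\mu_i=\lambda_{\sigma^{-1}(i)}=\lambda_j=\lambda_{j+1}=\lambda_{\sigma^{-1}(i+1)}=\mu_{i+1}$, as required. I expect the main (minor) obstacle to be making the descent characterization explicit cleanly, since the paper only mentions the abstract existence of the minimal length representative; but this is a quick unwinding of the definition combined with the standard equivalence $\ell(us_j)>\ell(u)\Leftrightarrow u(j)<u(j+1)$.
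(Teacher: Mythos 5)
Your proof is correct, and its overall skeleton is the same as the paper's: establish the three left-to-right implications and conclude by the trichotomy of the weight comparisons together with the disjointness of the three right-hand conditions. The length statements in the two strict cases are obtained exactly as in the paper, by applying Lemma~\ref{lem_cox_mon_1}~(1) to $\sigma$ (when $\mu_i<\mu_{i+1}$) and to $w=s_i\sigma$ with weight $s_i\mu$ (when $\mu_i>\mu_{i+1}$).

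The one step you handle by a genuinely different mechanism is the implication $s_i\sigma\notin\mathfrak{S}_n^\lambda\Rightarrow\mu_i=\mu_{i+1}$ (equivalently, the coset membership in the strict cases). The paper derives $\sigma^{-1}s_i\sigma\in\mathfrak{S}_\lambda$ from the exchange condition applied to a reduced decomposition of $s_i\sigma$; you instead invoke the explicit window/descent characterization $\tau\in\mathfrak{S}_n^\lambda\Leftrightarrow\tau(j)<\tau(j+1)$ for all $j$ with $\lambda_j=\lambda_{j+1}$, and check directly that $s_i$ can only reverse $\sigma(j)<\sigma(j+1)$ when $\sigma(j)=i$, $\sigma(j+1)=i+1$, which forces $\mu_i=\lambda_j=\lambda_{j+1}=\mu_{i+1}$. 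This is a correct and in fact more concrete argument (it uses that $\mathfrak{S}_\lambda$ is the standard parabolic generated by the $s_j$ with $\lambda_j=\lambda_{j+1}$, which holds because $\lambda$ is weakly decreasing), at the cost of being type-$A$-specific, whereas the paper's exchange-lemma argument is phrased so as to work in any Coxeter group. Your treatment of the equality case via uniqueness of the minimal-length coset representative is also slightly cleaner than the paper's, which asserts $\ell(s_i\sigma)=\ell(\sigma)+1$ up front without comment. No gaps.
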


\begin{proof}
Assume that $w=s_i \sigma \notin \mathfrak{S}_n^\lambda$. Then $\ell(w)= \ell(\sigma)+1$. Since $w\notin \mathfrak{S}_n^\lambda$, there is a simple reflection $s_j\in \mathfrak{S}_\lambda$ such that $\ell(w s_j) < \ell(w)$. Take any reduced decomposition $s_{n_1} \cdots s_{n_\ell}$ of $\sigma$. We have that $s_i s_{n_1} \cdots s_{n_\ell}$ is a reduced decomposition of $w$ and since $\ell(w s_j) < \ell(w)$, by the exchange lemma there is a reduced decomposition of $w s_j$ obtained from $s_i s_{n_1} \cdots s_{n_\ell}$ obtained by just removing a letter. If this letter is not $s_i$, we get that $\ell(\sigma s_j) =\ell(\sigma)-1$, in contradiction with $\sigma\in\mathfrak{S}_n^\lambda$, since $s_j \in \mathfrak{S}_\lambda$. We thus have that $w s_j = s_{n_1} \cdots s_{n_\ell}  =\sigma = s_i w$. It follows that $\sigma^{-1} s_i \sigma = s_j \in \mathfrak{S}_\lambda$. This yields $\sigma^{-1} s_i \sigma \lambda = \lambda$, hence $s_i \mu = \mu$, hence $\mu_i=\mu_{i+1}$. Conversely, assume that $\mu_i=\mu_{i+1}$. We thus have $s_i \sigma \lambda = \sigma \lambda= \mu$. Since $\sigma \in \mathfrak{S}_n^\lambda$, by uniqueness of the element of the element $w\in \mathfrak{S}_n^\lambda$ such that $\mu= w \lambda$, we cannot have $s_i \sigma \in\mathfrak{S}_n^\lambda$. Hence the two statements in the middle line are equivalent.

Assume that $\mu_i > \mu_{i+1}$. Then, since the middle equivalence is already shown, we know that $s_i \sigma \in \mathfrak{S}_n^\lambda$. By Lemma~\ref{lem_cox_mon_1}~(1), we must have $\ell(s_i s_i \sigma) < \ell( s_i \sigma)$, forcing $\ell(s_i \sigma)=\ell(\sigma)+1$. Also by Lemma~\ref{lem_cox_mon_1}~(1), if $\mu_i < \mu_{i+1}$, then $\ell(s_i \sigma) < \ell(\sigma)$, yielding $\ell(s_i \sigma)=\ell(\sigma)-1$.

We thus have shown that, in each line, the left condition implies the right one (we have even shown that we have equivalence in the middle line). Since the three conditions on the right are disjoint, we must have equivalence in each line.
\end{proof}

\begin{lemma}\label{lem_cox_mon_algo}
Let $\sigma\in\mathfrak{S}_n$ and $\alpha=\sigma \lambda$. We can obtain the minimal representative $\hat{\sigma}\in\mathfrak{S}%
_{n}^{\lambda}$ of $\sigma$ from any $\pi_\sigma=\pi_{j_{1}}\pi_{j_{2}}\cdots\pi_{j_{l}%
}\in\mathfrak{M}_{n}$ such that $\pi_{j_{1}}\pi_{j_{2}}\cdots\pi_{j_{l}%
}\lambda=\alpha$ with $s_{j_{1}}s_{j_{2}}\cdots s_{j_{l}}$ a (not necessarily
reduced) word of an element of $\mathfrak{S}_{n}$ as follows: for $r=l,\dots,1$, delete $\pi_{j_{r}}$ in $\pi_{j_{1}}\pi_{j_{2}%
}\cdots\pi_{j_{l}}$ whenever $\mu_{j_{r}}\leq\mu_{j_{r}+1}$ in $(\mu_{1}%
,\dots,\mu_{n})=\pi_{j_{r+1}}\cdots\pi_{j_{l}}(\lambda)$. The resulting decomposition obtained in this way is a reduced
decomposition $\pi_{\hat{\sigma}}$ in $\mathfrak{M}_{n}$ and gives
$\hat{\sigma}\in\mathfrak{S}_{n}^{\lambda}$.

\end{lemma}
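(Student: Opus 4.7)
My plan is to show that the algorithm has two effects: it preserves the image of $\lambda$ under the product, and the remaining word is a reduced decomposition of an element of $\mathfrak{S}_{n}^{\lambda}$. Uniqueness of the minimal length representative in the coset $\sigma\mathfrak{S}_{\lambda}$ will then force this element to be $\hat{\sigma}$.

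First, I would observe that deleting $\pi_{j_{r}}$ when $\mu_{j_{r}}\le\mu_{j_{r}+1}$ does not alter the value of the product applied to $\lambda$. Indeed, by the very definition~\eqref{Bubble} of the bubble sort operators, $\pi_{j_{r}}$ acts as the identity on the weight $\mu=\pi_{j_{r+1}}\cdots\pi_{j_{l}}(\lambda)$ under this hypothesis. Processing from $r=l$ down to $r=1$ guarantees that the $\mu$ tested at step $r$ depends only on the suffix already examined, whose action on $\lambda$ has not been modified by earlier deletions, so the global image $\alpha$ is preserved throughout.

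Next, I would proceed by induction on the number of retained generators, reading them from right to left. Let $\tau_{r}$ denote the product of the $r$ rightmost retained simple reflections. The inductive hypothesis is that $\tau_{r}\in\mathfrak{S}_{n}^{\lambda}$, that the retained expression is a reduced decomposition of $\tau_{r}$, and that $\tau_{r}\lambda$ coincides with the weight $\mu$ recorded by the algorithm just before the next retained letter is prepended (the equality $\tau_{r}\lambda=\pi_{\tau_{r}}(\lambda)$ coming from Lemma~\ref{lem_cox_mon_1}(2)). For the inductive step, the next retained letter $\pi_{j}$ satisfies $(\tau_{r}\lambda)_{j}>(\tau_{r}\lambda)_{j+1}$ by the deletion rule, so Lemma~\ref{lem_equiv_mu_weyl} gives simultaneously $\ell(s_{j}\tau_{r})=\ell(\tau_{r})+1$ and $s_{j}\tau_{r}\in\mathfrak{S}_{n}^{\lambda}$, while~\eqref{Bubble} yields $\pi_{j}(\tau_{r}\lambda)=s_{j}(\tau_{r}\lambda)$, propagating the identification of the $\pi$-action with the Weyl-group action.

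Once the induction terminates, the final element $\tau_{m}$ lies in $\mathfrak{S}_{n}^{\lambda}$, admits the kept word as a reduced decomposition, and satisfies $\tau_{m}\lambda=\alpha=\sigma\lambda$; via the bijection $\mathfrak{S}_{n}^{\lambda}\leftrightarrow\mathfrak{S}_{n}\lambda$ this forces $\tau_{m}=\hat{\sigma}$, which is what we want. The main subtlety I anticipate is the bookkeeping of the weights $\mu$ as letters are deleted: one must check that the $\mu$ computed at position $r$ from the original word and the one computed from the trimmed suffix agree. The right-to-left scan prescribed in the statement is precisely what enforces this agreement, since the deleted letters act trivially at exactly the moments they were deleted, and no re-reading of previous $\mu$'s is ever required.
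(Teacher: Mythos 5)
Your proof is correct and follows essentially the same route as the paper's: preservation of the action on $\lambda$ under deletions, a right-to-left induction on the retained letters using Lemma~\ref{lem_equiv_mu_weyl} to get reducedness and membership in $\mathfrak{S}_{n}^{\lambda}$, and uniqueness of the minimal coset representative to conclude $\tau=\hat{\sigma}$. Your explicit remark that the weight tested at step $r$ agrees whether computed from the original or the trimmed suffix is a point the paper treats more tersely, but it does not change the argument.
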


\begin{proof}
The fact that the resulting decomposition $\pi_{i_1} \pi_{i_2}\cdots \pi_{i_k}$ satisfies $\pi_{i_1} \pi_{i_2}\cdots \pi_{i_k}(\lambda)=\pi_\sigma(\lambda)$ is clear since a letter $\pi_{j_r}$ is removed whenever its action on $\pi_{j_{r+1}}\cdots\pi_{j_{l}}(\lambda)$ is trivial. We show by decreasing induction on $k$ that $s_{i_j} s_{i_{j+1}} \cdots s_{i_k}(\lambda)= \pi_{i_j} \pi_{i_{j+1}} \cdots \pi_{i_k}  (\lambda)$ for all $j$, and that $s_{i_j} \cdots s_{i_k}$ is reduced and lies in $\mathfrak{S}_n^{\lambda}$. If $k=0$ then the result is trivially true. Hence let $j \leq k$ and assume that $\mu:=s_{i_{j+1}} \cdots s_{i_k}(\lambda)= \pi_{i_{j+1}} \cdots \pi_{i_k}  (\lambda)$, and that $s_{i_{j+1}} \cdots s_{i_k}$ is reduced and lies in $\mathfrak{S}_n^{\lambda}$. We must have $\mu_{i_j}> \mu_{i_{j}+1}$, otherwise the letter $\pi_{i_j}$ would have been removed. Hence by definition of the action of the bubble sort operator $\pi_{i_j}$, we have $\pi_{i_j}(\mu)=s_{i_j}(\mu)$, which yields $s_{i_j} s_{i_{j+1}} \cdots s_{i_k}(\lambda)= \pi_{i_j} \pi_{i_{j+1}} \cdots \pi_{i_k}  (\lambda)$. Setting $w=s_{i_{j+1}}\cdots s_k$ we obtain using Lemma~\ref{lem_equiv_mu_weyl}~(1) that $\ell(s_{i_j}w)= \ell(w)+1$ and $s_{i_j} w\in\mathfrak{S}_n^\lambda$, hence $s_{i_j} s_{i_{j+1}}\cdots s_{i_k}$ is still reduced, and defines an element of $\mathfrak{S}_n^\lambda$.

It only remains to show that $\tau:=s_{i_1} s_{i_2} \cdots s_{i_k}$ is equal to $\hat{\sigma}$. But we have $\pi_\tau (\lambda)= \pi_{\sigma}(\lambda)=\pi_{\hat{\sigma}}(\lambda)$, which by Lemma~\ref{lem_cox_mon_1}~(2) yields $\tau(\lambda)=\hat{\sigma}(\lambda)$. Since both $\tau, \hat{\sigma}$ lie in $\mathfrak{S}_n^\lambda$, this forces $\tau=\hat{\sigma}$, which concludes the proof.
%The fact that it is reduced follows from the fact that, for each $j$ in $1, \dots, k-1$, setting $\mu^j:=(\pi_{i_{j+1}} \cdots \pi_{i_k}(\lambda))$, we have $\mu^j_{i_j} > \mu^j_{i_j+1}$ since otherwise the letter $\pi_{i_j}$ would have been removed. It then follows from Lemma~\ref{lem_cox_mon_1}~(1) that $\ell(s_{i_j} s_{i_j} s_{i_{j+1}} \cdots s_{i_k}) < \ell(s_{i_j} s_{i_{j+1}} \cdots s_{i_k}))$, hence by decreasing induction on $j$ (starting with $s_{i_k}$ which is reduced) we get that $s_{i_1} \cdots s_{i_k}$ is reduced.
\end{proof}

\begin{example} Let $n=4$. We have
$${\pi_{2}\pi_{2}}{\pi}_{1}{\pi}_{2}{(3,2,2,1)}={\pi_{2}}{\pi}_{2}{\pi}%
_{1}{(3,2,2,1)}=\pi_{2}\pi_2(2,3,2,1)=\pi_2(2,2,3,1)=(2,2,3,1).$$
Applying the algorithm described in Lemma~\ref{lem_cox_mon_algo} to the word $\pi_2 \pi_2 \pi_1 \pi_2$ and the weight $\lambda=(3,2,2,1)$ yields $ \widehat{\pi_2} \pi_2 \pi_1 \widehat{\pi_2}= \pi_2 \pi_1 = \pi_{s_2 s_1}$, where the hat over the bubble sort operator denotes omission. We indeed have $\pi_2 \pi_2 \pi_1 \pi_2= \pi_\sigma$ with $\sigma= s_2 s_1 s_2$ and $\hat{\sigma}=s_2 s_1$ (here $\mathfrak{S}_\lambda=\{ 1, s_2\}$).
%={\pi_{2}}\pi_{1}{(3,2,2,1)}$, and
%thus $s_{2}s_{1}\in\mathfrak{S}_{4}^{\lambda}$, $\lambda=(3,2,2,1)$.
\end{example}

%We can also define the quotient monoid $\mathfrak{M}_{n}^{\lambda}$ by
%identifying any two elements of $\mathfrak{M}_{n}$ when they have the same
%action on $\lambda$ and thus with the same reduction $\pi_{\hat{\sigma}}$ with
%$\hat{\sigma}\in\mathfrak{S}_{n}^{\lambda}$.

\subsection{Crystals}

\subsubsection{Abstract crystals}

To each partition $\lambda\in\mathcal{P}_{n}$ corresponds a crystal graph
$B(\lambda)$ which can be regarded as the combinatorial skeleton of the simple
module $V(\lambda)$. In particular, its vertices label a distinguished basis
of $V(\lambda)$. Its general structure can be defined using the canonical bases introduced by Lusztig \cite{Luca} and subsequently studied
by Kashiwara under the name of global bases (see \cite{kash} and
\cite{kash2}). It also admits various combinatorial realizations (\textit{i.e.}, vertex
labelings) in terms semistandard tableaux, Littelmann's paths (see
\cite{Lit1}) or semi-skyline (see \cite{Mas}, \cite{AE}).\ We will recall the
tableau realization below.\ The (abstract) crystal $B(\lambda)$ is a graph
whose set of vertices is endowed with a weight function $\mathrm{wt}%
:B(\lambda)\rightarrow P$ and with the structure of a colored and oriented
graph given by the action of the crystal operators $\tilde{f}_{i}$ and
$\tilde{e}_{i}$ with $i\in I=\{1,\dots,n-1\}$. More precisely, we have an
oriented arrow $b\overset{i}{\rightarrow}b^{\prime}$ between two vertices $b$
and $b^{\prime}$ in $B(\lambda)$ if and only if $b^{\prime}=\tilde{f}_{i}(b)$
or equivalently $b=\tilde{e}_{i}(b^{\prime})$. We have $\tilde{f}_{i}(b)=0$
(resp. $\tilde{e}_{i}(b)=0$) when no arrow $i$ starts from $b$ (resp. ends at
$b$). Here the symbol $0$ should be understood as a sink vertex not lying in
$B(\lambda)$.\ For any $i\in I$, the crystal $B(\lambda)$ can be decomposed
into its \textsf{$i$-chains} which are obtained just by keeping the $i$-arrows.$\ $For
such a chain $C$, we denote by $s(C)$ and $e(C)$ its source and target
vertices, respectively. There is a unique vertex $b_{\lambda}$ in $B(\lambda)$
such that $\tilde{e}_{i}(b_{\lambda})=0$ for any $i\in I$ (that is,
$b_{\lambda}$ is the source vertex of each $i$-chain containing $b_{\lambda}$)
called the \textsf{highest weight vertex} of $B(\lambda)$ and we have $\mathrm{wt}%
(b_{\lambda})=\lambda$.\ For any $b\in B(\lambda)$, there is a path
$b=\tilde{f}_{i_{1}}\cdots\tilde{f}_{i_{r}}(b_{\lambda})$ from $b_{\lambda}$
to $b$.\ Let us denote by $S=\{\alpha_{1},\ldots,\alpha_{n-1}\}$ the set of
simple roots of $\mathfrak{gl}_{n}$ where $\alpha_{i}=\mathbf{e}%
_{i}-\mathbf{e}_{i+1}$ for $1\leq i<n$. The weight function $\mathrm{wt}$ satisfies
\[
\mathrm{wt}(b)=\lambda-\sum_{k=1}^{r}\alpha_{i_{k}}.
\]
For any $i\in I$, the crystal $B(\lambda)$ decomposes into $i$-chains.\ Thus,
for any vertex $b\in B(\lambda)$, we can define $\varphi_{i}(b)=\max
\{k\mid\tilde{f}_{i}^{k}(b)\neq0\}$ and $\varepsilon_{i}(b)=\max\{k\mid
\tilde{e}_{i}^{k}(b)\neq0\}$.\ We then have
\[
s_{\lambda}=\sum_{b\in B(\lambda)}x^{\mathrm{wt}(b)}.
\]

%where for any $i=1,\ldots,n-1$, $\omega_{i}=(\underset{i\text{-times}%}{\underbrace{1,\ldots,1}},0,\ldots0)\in\mathbb{Z}^{n}$ is the $i$-th
%fundamental weight of $\mathfrak{gl}_{n}$.
The Weyl group $W$ also acts on the vertices of $B(\lambda)$: the action of
the simple reflection $s_{i}$ on $B(\lambda)$ sends each vertex $b$ on the
unique vertex $b^{\prime}$ in the $i$-chain of $b$ such that $\varphi
_{i}(b^{\prime})=\varepsilon_{i}(b)$ and $\varepsilon_{i}(b^{\prime}%
)=\varphi_{i}(b)$. This simply means that $b$ and $b^{\prime}$ correspond by
the reflection with respect to the center of the $i$-chain containing $b$. We
shall write
\[
O(\lambda)=\{\sigma\cdot b_{\lambda}=b_{\sigma\lambda}\mid\sigma
\in\mathfrak{S}_{n}^{\lambda}\}
\]
for the orbit of the highest weight vertex $b_{\lambda}$ of $B(\lambda
)$.\ Observe that $b_{\sigma\lambda}$ is then the unique vertex in
$B(\lambda)$ of weight $\sigma\lambda$. {The elements of $O(\lambda)$, called
the \textsf{keys} of $B(\lambda)$, }are those vertices of $B(\lambda)$ which are
completely characterized by their weight. Thereby, one has a direct
correspondence between the keys and the elements of $\mathfrak{S}_{n}%
^{\lambda}$. For convenience, we often abuse notation and identify the key
$b_{\sigma\lambda}$ with $\sigma\in\mathfrak{S}_{n}^{\lambda}$.
%Given $b_{\sigma\lambda}$ and $b_{\sigma^{\prime}\lambda}$ in
%$O(\lambda)$, we will write $b_{\sigma\lambda}\leq b_{\sigma^{\prime}\lambda}$
%when $\sigma\leq\sigma^{\prime}$ in the Bruhat order.

In fact, one can associate a crystal to any finite-dimensional $\mathfrak{gl}%
_{n}$-module by considering its decomposition into irreducible components.
This $\mathfrak{gl}_{n}$-crystal is a disjoint union of connected components,
each being isomorphic to a highest weight crystal $B(\lambda), \lambda
\in\mathcal{P}_{n}$.$\ $Given two partitions $\lambda$ and $\mu$ in
$\mathcal{P}_{n}$, the crystal associated to the representation $V(\lambda
)\otimes V(\mu)$ is the crystal $B(\lambda)\otimes B(\mu)$ whose set of
vertices is the direct product of the sets of vertices of $B(\lambda)$ and
$B(\mu)$ and whose crystal structure is given {by }$\mathrm{wt}${$(a\otimes
b)=$}$\mathrm{wt}${$(a)+$}$\mathrm{wt}${$(b)$} and by the following rules%
\begin{equation}
\tilde{e}_{i}(u\otimes v)=\left\{
\begin{array}
[c]{l}%
u\otimes\tilde{e}_{i}(v)\text{ if }\varepsilon_{i}(v)>\varphi_{i}(u)\\
\tilde{e}_{i}(u)\otimes v\text{ if }\varepsilon_{i}(v)\leq\varphi_{i}(u)
\end{array}
\right.  \text{ and }\tilde{f}_{i}(u\otimes v)=\left\{
\begin{array}
[c]{l}%
\tilde{f}_{i}(u)\otimes v\text{ if }\varphi_{i}(u)>\varepsilon_{i}(v)\\
u\otimes\tilde{f}_{i}(v)\text{ if }\varphi_{i}(u)\leq\varepsilon_{i}(v)
\end{array}
\right.  . \label{tens_crys}%
\end{equation}
We adopt the convention that $u\otimes0=0\otimes v=0$. A key result in crystal
theory shows that for any partition $\nu\in\mathcal{P}_{n}$, the tensor
multiplicity $c_{\lambda,\mu}^{\nu}$ of $V(\nu)$ in $V(\lambda)\otimes V(\mu)$
(which is a Littlewood-Richardson coefficient) is equal to the number of
connected components in $B(\lambda)\otimes B(\mu)$ with highest weight vertex
of weight $\nu$.

\subsubsection{Keys and dilatation of crystals}

Consider $k$ a positive integer and $\lambda$ a partition. There exists a
unique embedding of crystals $\psi_{k}:B(\lambda)\hookrightarrow B(k\lambda)$
such that for any vertex $b\in B(\lambda)$ and any path $b=\tilde{f}_{i_{1}%
}\cdots\tilde{f}_{i_{l}}(b_{\lambda})$ in $B(\lambda)$, we have
\[
\psi_{k}(b)=\tilde{f}_{i_{1}}^{k}\cdots\tilde{f}_{i_{l}}^{k}(b_{k\lambda}).
\]
Since the vertex $b_{\lambda}^{\otimes k}$ is of highest weight $k\lambda$ in
$B(\lambda)^{\otimes k}$, one gets a particular realization $B(b_{\lambda
}^{\otimes k})$ of $B(k\lambda)$ in $B(\lambda)^{\otimes k}$ with highest
weight vertex $b_{\lambda}^{\otimes k}$. This thus gives a canonical
embedding
\begin{equation}
\theta_{k}:\left\{
\begin{array}
[c]{c}%
B(b_{\lambda})\hookrightarrow B(b_{\lambda}^{\otimes k})\subset B(b_{\lambda
})^{\otimes k}\\
b\longmapsto b_{1}\otimes\cdots\otimes b_{k}%
\end{array}
\right.  \label{embdedd}%
\end{equation}
with important properties given in the following theorem and illustrated in
Example \ref{Example-Dilatation}.

\begin{theorem}
\label{Th_Dila}(see \cite{kash2})

\begin{enumerate}
\item Let $\sigma\in\mathfrak{S}_{n}^{\lambda}.$ We have $\theta_{k}%
(b_{\sigma\lambda})=b_{\sigma\lambda}^{\otimes k}$.

\item Let $b\in B(\lambda)$. When $k$ has sufficiently many factors, there
exist elements $\sigma_{1},\ldots,\sigma_{k}$ in $\mathfrak{S}_{n}^{\lambda}$
such that $\theta_{k}(b)=b_{\sigma_{1}\lambda}\otimes\cdots\otimes
b_{\sigma_{k}\lambda}$. Moreover, in this case

\begin{enumerate}
\item the elements $b_{\sigma_{1}\lambda}$ and $b_{\sigma_{k}\lambda}$ in
$\theta_{k}(b)$ do not then depend on $k$,

\item up to repetition, the sequence $(\sigma_{1}\lambda,\ldots,\sigma
_{k}\lambda)$ in $\theta_{k}(b)$ does not depend on the realization of the
crystal $B(\lambda)$ and we have $\sigma_{1}\geq\sigma_{2}\geq\cdots\geq
\sigma_{k}$.
\end{enumerate}
\end{enumerate}
\end{theorem}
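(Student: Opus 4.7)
The plan for part~(1) is to use the characterization of keys as the unique vertices of their weight in a given connected component. The map $\psi_k$ scales weights by $k$: for any path $b_{\sigma\lambda}=\tilde f_{i_1}\cdots\tilde f_{i_l}(b_\lambda)$ one has $\mathrm{wt}(\psi_k(b_{\sigma\lambda}))=k\lambda-k\sum_j\alpha_{i_j}=k\sigma\lambda$, so $\theta_k(b_{\sigma\lambda})$ lies in $B(b_\lambda^{\otimes k})$ with weight $k\sigma\lambda$. On the other hand, iterating the tensor-product rule~\eqref{tens_crys} one checks that applying $\tilde f_i$ maximally to $b_\mu^{\otimes k}$ yields $b_{s_i\mu}^{\otimes k}$ whenever $\mu_i>\mu_{i+1}$ (the chain has length $k(\mu_i-\mu_{i+1})$); running this along a reduced word for $\sigma\in\mathfrak{S}_n^\lambda$ via Lemma~\ref{lem_equiv_mu_weyl} shows $b_{\sigma\lambda}^{\otimes k}\in B(b_\lambda^{\otimes k})$, again with weight $k\sigma\lambda$. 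Since $\mathfrak{S}_\lambda=\mathfrak{S}_{k\lambda}$, that weight singles out the unique key $b_{\sigma\cdot k\lambda}$ in $B(k\lambda)$, so $\theta_k(b_{\sigma\lambda})$ and $b_{\sigma\lambda}^{\otimes k}$ must agree under the identification $B(b_\lambda^{\otimes k})\simeq B(k\lambda)$.

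For part~(2), I would write $\theta_k(b)=\tilde f_{i_1}^k\cdots\tilde f_{i_r}^k(b_\lambda^{\otimes k})$ via any path $b=\tilde f_{i_1}\cdots\tilde f_{i_r}(b_\lambda)$. The existence of a decomposition $\theta_k(b)=b_{\sigma_1\lambda}\otimes\cdots\otimes b_{\sigma_k\lambda}$ for $k$ with enough factors is the content of Kashiwara's similarity theorem~\cite{kash2}, which I would invoke; equivalently, it can be read off the Littelmann path model~\cite{Lit1}, where every vertex of $B(\lambda)$ is encoded by a piecewise-linear Bruhat-decreasing path in the Weyl orbit $\mathfrak{S}_n\lambda$ with rational local weights, and clearing denominators by a sufficiently divisible $k$ produces precisely such a tensor of keys. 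The chain condition $\sigma_1\geq\cdots\geq\sigma_k$ in~(2b) is then the defining compatibility of LS paths. For~(2a), by part~(1) and the compatibility of $\theta$ under further dilation, passing from $k$ to a multiple $k\ell$ replaces each factor $b_{\sigma_j\lambda}$ in $\theta_k(b)$ by $b_{\sigma_j\lambda}^{\otimes\ell}$, so the first and last distinct entries of the sequence are stable. The independence from the realization in~(2b) is then immediate since $\theta_k$ is defined purely in terms of the abstract crystal data.

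The substantive step is the existence in part~(2): showing that every tensor factor of $\theta_k(b)$ is a key requires either Kashiwara's similarity theorem or the LS-path machinery. A direct induction on $r$ using~\eqref{tens_crys} and Lemma~\ref{lem_equiv_mu_weyl} is in principle possible---each $\tilde f_i^k$ either moves mass cleanly between adjacent key factors or merges two consecutive keys into a new key, provided $k$ absorbs the relevant chain lengths---but the bookkeeping is substantial, so I would rely on~\cite{kash2} here and use the remainder of the plan to derive~(1), (2a), and the monotonicity assertion of~(2b).
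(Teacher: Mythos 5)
The paper does not prove Theorem~\ref{Th_Dila} at all: it is stated as a quoted result with a bare citation to Kashiwara~\cite{kash2}, so there is no in-text argument to compare yours against. Your reconstruction is sound and goes somewhat further than the paper. The derivation of part~(1) is correct and essentially self-contained: the $i$-string computation on $b_{\mu}^{\otimes k}$ under the convention~\eqref{tens_crys} does give $\tilde{f}_{i}^{k(\mu_i-\mu_{i+1})}(b_{\mu}^{\otimes k})=b_{s_i\mu}^{\otimes k}$ when $\mu_i>\mu_{i+1}$, and combining this with Lemma~\ref{lem_equiv_mu_weyl} along a reduced word places $b_{\sigma\lambda}^{\otimes k}$ in $B(b_{\lambda}^{\otimes k})$, after which the one-dimensionality of the extremal weight space $V(k\lambda)_{k\sigma\lambda}$ forces the identification. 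For part~(2) you defer the existence of the key factorization to~\cite{kash2} (or the LS-path model), which is exactly the paper's stance, so no gap arises there. Two points in your sketch deserve a word of justification rather than assertion: the compatibility $\theta_{k\ell}=(\theta_{\ell}\otimes\cdots\otimes\theta_{\ell})\circ\theta_{k}$ used for~(2a), which follows from the uniqueness of the crystal embedding $B(k\ell\lambda)\hookrightarrow B(\lambda)^{\otimes k\ell}$ sending highest weight vertex to $b_{\lambda}^{\otimes k\ell}$ but is not automatic; and the fact that comparing two arbitrary admissible values $k$ and $k'$ (neither a multiple of the other) requires passing through a common multiple such as $kk'$. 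With those remarks added, your proposal is a legitimate, more explicit substitute for the paper's pure citation.
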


From Assertion 2 of the above theorem, we can define the keys of an element in
$B(\lambda)$\footnote{We dot use the terminology "left" and "right" keys as in
the original definition {\cite{Las1}} based on the tableaux model since it
does not fit with the positions of $b_{\sigma_{1}\lambda}$ and $b_{\sigma
_{k}\lambda}$ in $\theta_{k}(b)$ with the convention of this paper.}.

\begin{definition}
\label{DefKeys}Let $b\in B(\lambda)$, then the keys $K_{+}(b)$ and $K^{-}(b)$
of $b$ are defined as follows:
\[
K_{+}(b)=b_{\sigma_{1}\lambda}\text{ and }K^{-}(b)=b_{\sigma_{k}\lambda}.
\]
{In particular, $K_{+}(b_{\sigma\lambda})=K^{-}(b_{\sigma\lambda}%
)=b_{\sigma\lambda}$ for any $\sigma\in\mathfrak{S}_{n}^{\lambda}$. The orbit
$O(\lambda)$ is simultaneously the set of left and right keys of ${B}%
(\lambda)$.}
\end{definition}

%Following Kashiwara, let us now define for any $\sigma\in\mathfrak{S}%
%^{\lambda}$, the set
%\[
%\overline{B}_{\sigma}(\lambda)=\{b\in B(\lambda)\mid K_{\max}(b)=b_{\sigma
%\lambda}\}\text{ and }\overline{B}^{\sigma}(\lambda)=\{b\in B(\lambda)\mid
%K^{\min}(b)=b_{\sigma\lambda}\}
%\]
%We then have $B(\lambda)=%
%%TCIMACRO{\tbigsqcup \limits_{\sigma\in\mathfrak{S}^{\lambda}}}%
%%BeginExpansion
%{\textstyle\bigsqcup\limits_{\sigma\in\mathfrak{S}^{\lambda}}}
%%EndExpansion
%\overline{B}_{\sigma}(\lambda)=%
%%TCIMACRO{\tbigsqcup \limits_{\sigma\in\mathfrak{S}^{\lambda}}}%
%%BeginExpansion
%{\textstyle\bigsqcup\limits_{\sigma\in\mathfrak{S}^{\lambda}}}
%%EndExpansion
%\overline{B}^{\sigma}(\lambda)$.

\subsubsection{Tableau realization}

\label{Subsubsec_tableaux}Recall that each partition $\lambda$ in
$\mathcal{P}_{n}$ can be identified with its Young diagram.\ A semistandard
tableau $T$ of shape $\lambda$ is then a filling of $\lambda$ by letters in
the ordered alphabet $\mathcal{A}_{n}=\{1<\cdots<n\}$ whose rows weakly
increase from left to right and columns strictly increase from {bottom to
top}. The \textsf{row reading} of $T$ is the word $w(T)$ of $\mathcal{A}_{n}^{\ast}$
obtained by reading each row from right to left starting with the bottom row
and ending with the top row. {The \textsf{weight} of $T$ is the vector }$\mathrm{wt}%
${$(T)\in\mathbb{Z}_{\geq0}^{n}$ whose $i$-th entry records the number of
$i$'s in the filling of $T$, for $i=1,\dots,n$.}

\begin{example}
\label{Example_tableau}For $n=4$ the tableau
\[
T=%
\begin{tabular}
[c]{|l|l|l}\cline{1-2}%
$3$ & $4$ & \\\hline
$2$ & $2$ & \multicolumn{1}{|l|}{$4$}\\\hline
$1$ & $1$ & \multicolumn{1}{|l|}{$2$}\\\hline
\end{tabular}
\ \ \ \ \
\]
is a semistandard tableau of shape $\lambda=(3,3,2,0)$ with row reading
$w(T)=21142243$ {and weight }$\mathrm{wt}${$(T)=(2,3,1,2)$.}
\end{example}

One can realize $B(\lambda)$ using the semistandard tableaux of shape
$\lambda$ just by describing the action of the crystals operators $\tilde
{f}_{i}$ and $\tilde{e}_{i},i=1,\ldots,n-1$ on each such tableau. Assume that
$i$ is fixed in $\{1,\ldots,n-1\}$ and $T$ is a semistandard tableau of shape
$\lambda$. Let $w_{i}(T)$ be the subword of $w(T)$ obtained by keeping
only the letters $i$ and $i+1$ in $w(T)$.\ Now delete recursively all the
\emph{factors} $i(i+1)$ in $w_{i}(T)$. This eventually yields a subword
$\tilde{w}_{i}(T)$ of ${w}(T)$ of the form $\tilde{w}_{i}%
(T)=(i+1)^{a}(i)^{b}$. When $b>0$ (resp. $a>0$), $\tilde{f}_{i}(T)$ (resp.
$\tilde{e}_{i}(T)$) is obtained by replacing in $T$ the letter of
${w}(T)$ corresponding to the leftmost letter $i$ (resp. to the
rightmost $i+1$) surviving in $\tilde{w}_{i}(T)$ by $i+1$ (resp. by $i$). When
$b=0$ (resp. $a=0$), we set $\tilde{f}_{i}(T)=0$ (resp. $\tilde{e}_{i}(T)=0$)
where $0$ is understood as a sink vertex{ as before}. This just means that in
this case, there is no arrow $i$ starting at $T$ (resp. no arrow $i$ ending at
$T$). Observe that with the notation of the previous paragraph one gets

%\begin{thomas} We were using two notations $w(T)$ and $\mathrm{w}(T)$ for the same thing. I kept $w(T)$ but it is a completely arbitrary choice, please change if you prefer $\mathrm{w}(T)$ or if it is the usual convention \end{thomas}%
%\begin{olga} Correct. ${w}(T)$  is the notation for the word of $T$. \end{olga}%

\[
\varepsilon_{i}(T)=a\text{ and }\varphi_{i}(T)=b.
\]
Also, it is easy to compute the action of $s_{i}=(i,i+1)\in\mathfrak{S}_{n}$
on $T:$ the tableau $s_{i}.T$ is obtained by replacing in $T$ the $a-b$
{rightmost} letters $i+1$ (resp. the $b-a$ {leftmost} letters $i$) of
$\tilde{w}_{i}(T)$ by $i$ (resp. by $i+1$) when $a\geq b$ (resp. $a<b$).

%\begin{olga}
%It is preferable to write $s_{i}.T$ instead of $s_{i}(T)$ to be consistent
%with the notation in the definition of $O(\lambda)$, an action of the
%generator $s_{i} $ of the group $\mathfrak{S}_{n}$ on the vertex $T$ of
%$B(\lambda)$.
%\end{olga}

\begin{example}
By resuming Example~\ref{Example_tableau}, {one} gets%
\begin{align*}
\tilde{f}_{1}(T)  &  =0\text{ and }\tilde{e}_{1}(T)=%
\begin{tabular}
[c]{|l|l|l}\cline{1-2}%
$3$ & $4$ & \\\hline
$2$ & $2$ & \multicolumn{1}{|l|}{$4$}\\\hline
$1$ & $1$ & \multicolumn{1}{|l|}{$ 1$}\\\hline
\end{tabular}
\\
\tilde{f}_{2}(T)  &  =%
\begin{tabular}
[c]{|l|l|l}\cline{1-2}%
$3$ & $4$ & \\\hline
$2$ & $2$ & \multicolumn{1}{|l|}{$4$}\\\hline
$1$ & $1$ & \multicolumn{1}{|l|}{$3$}\\\hline
\end{tabular}
\ \ \text{and }\tilde{e}_{2}(T)=0\text{ with }{s_{2}.T}=%
\begin{tabular}
[c]{|l|l|l}\cline{1-2}%
$3$ & $4$ & \\\hline
$2$ & $3$ & \multicolumn{1}{|l|}{$4$}\\\hline
$1$ & $1$ & \multicolumn{1}{|l|}{$3$}\\\hline
\end{tabular}
\\
\tilde{f}_{3}(T)  &  =%
\begin{tabular}
[c]{|l|l|l}\cline{1-2}%
$4$ & $4$ & \\\hline
$2$ & $2$ & \multicolumn{1}{|l|}{$4$}\\\hline
$1$ & $1$ & \multicolumn{1}{|l|}{$2$}\\\hline
\end{tabular}
\ \ \text{ and }\tilde{e}_{3}(T)=%
\begin{tabular}
[c]{|l|l|l}\cline{1-2}%
$3$ & $3$ & \\\hline
$2$ & $2$ & \multicolumn{1}{|l|}{$4$}\\\hline
$1$ & $1$ & \multicolumn{1}{|l|}{$2$}\\\hline
\end{tabular}
\end{align*}

\end{example}

With the above definition of the crystal operators, it is easy to check that
the set of semistandard tableaux of shape $\lambda$ admits the structure of an
oriented and connected graph isomorphic to the abstract crystal $B(\lambda)$
(see \cite{kash}). In particular its unique highest weight vertex is the
Yamanouchi tableau $T_{\lambda}$ whose $i$-th row only contains letters $i$
for any $i=1,\ldots,n$. In fact the orbit $O(\lambda)$ is also easy to
describe in this model: it exactly contains the so-called \textsf{key
tableaux} of shape $\lambda$ which are the semistandard tableaux in which each
column is contained in the column located immediately at its left. Their
weights correspond to the orbit of $\lambda\in\mathbb{Z}^{n}$ under the action
of $\mathfrak{S}_{n}$.

\begin{example}
For $n=3$, the six {key} tableaux (or simply keys) of shape $\lambda=(2,1,0)$
are%
\begin{align*}
&
\begin{tabular}
[c]{|l|l}\cline{1-1}%
$2$ & \\\hline
$1$ & \multicolumn{1}{|l|}{$1$}\\\hline
\end{tabular}
\ \ \ \ ,\quad%
\begin{tabular}
[c]{|l|l}\cline{1-1}%
$3$ & \\\hline
$1$ & \multicolumn{1}{|l|}{$1$}\\\hline
\end{tabular}
\ \ \ \ ,\quad%
\begin{tabular}
[c]{|l|l}\cline{1-1}%
$2$ & \\\hline
$1$ & \multicolumn{1}{|l|}{$2$}\\\hline
\end{tabular}
\ \ \ \ ,\\
&
\begin{tabular}
[c]{|l|l}\cline{1-1}%
$3$ & \\\hline
$2$ & \multicolumn{1}{|l|}{$2$}\\\hline
\end{tabular}
\ \ \ \ ,\quad%
\begin{tabular}
[c]{|l|l}\cline{1-1}%
$3$ & \\\hline
$1$ & \multicolumn{1}{|l|}{$3$}\\\hline
\end{tabular}
\ \ \ \ ,\quad%
\begin{tabular}
[c]{|l|l}\cline{1-1}%
$3$ & \\\hline
$2$ & \multicolumn{1}{|l|}{$3$}\\\hline
\end{tabular}
\end{align*}

\end{example}

\begin{example}
\label{Example-Dilatation}For $n=3$, $\lambda=(2,1,0)$ and $k=2$,
the crystal $B(\lambda)$ and its dilatation $B(\lambda)^{\otimes2}$ are as follows:%

\[%
\begin{tabular}
[c]{lllllll}
&  &  & $%
\begin{tabular}
[c]{|l|l}\cline{1-1}%
$2$ & \\\hline
$1$ & \multicolumn{1}{|l|}{$1$}\\\hline
\end{tabular}
$ &  &  & \\
&  & $%
%TCIMACRO{\QATOPD{.}{.}{\text{\QTR{tiny}{1}}}{\swarrow}}%
%BeginExpansion
\genfrac{.}{.}{0pt}{}{\text{{\tiny 1}}}{\swarrow}%
%EndExpansion
$ &  & $%
%TCIMACRO{\QATOPD{.}{.}{\text{\QTR{tiny}{2}}}{\searrow}}%
%BeginExpansion
\genfrac{.}{.}{0pt}{}{\text{{\tiny 2}}}{\searrow}%
%EndExpansion
$ &  & \\
& $%
\begin{tabular}
[c]{|l|l}\cline{1-1}%
$2$ & \\\hline
$1$ & \multicolumn{1}{|l|}{$2$}\\\hline
\end{tabular}
$ &  &  &  & $%
\begin{tabular}
[c]{|l|l}\cline{1-1}%
$3$ & \\\hline
$1$ & \multicolumn{1}{|l|}{$1$}\\\hline
\end{tabular}
$ & \\
& \ \ {\tiny 2}$\downarrow$ &  &  &  & $\ \ ${\tiny 1}$\downarrow$ & \\
& $%
\begin{tabular}
[c]{|l|l}\cline{1-1}%
$2$ & \\\hline
$1$ & \multicolumn{1}{|l|}{$3$}\\\hline
\end{tabular}
$ &  &  &  & $%
\begin{tabular}
[c]{|l|l}\cline{1-1}%
$3$ & \\\hline
$1$ & \multicolumn{1}{|l|}{$2$}\\\hline
\end{tabular}
$ & \\
& \ \ {\tiny 2}$\downarrow$ &  &  &  & \ \ {\tiny 1}$\downarrow$ & \\
& $%
\begin{tabular}
[c]{|l|l}\cline{1-1}%
$3$ & \\\hline
$1$ & \multicolumn{1}{|l|}{$3$}\\\hline
\end{tabular}
$ &  &  &  & $%
\begin{tabular}
[c]{|l|l}\cline{1-1}%
$3$ & \\\hline
$2$ & \multicolumn{1}{|l|}{$2$}\\\hline
\end{tabular}
$ & \\
&  & $%
%TCIMACRO{\QATOPD{.}{.}{\text{\QTR{tiny}{1}}}{\searrow}}%
%BeginExpansion
\genfrac{.}{.}{0pt}{}{\text{{\tiny 1}}}{\searrow}%
%EndExpansion
$ &  & $%
%TCIMACRO{\QATOPD{.}{.}{\text{\QTR{tiny}{2}}}{\swarrow}}%
%BeginExpansion
\genfrac{.}{.}{0pt}{}{\text{{\tiny 2}}}{\swarrow}%
%EndExpansion
$ &  & \\
&  &  & $%
\begin{tabular}
[c]{|l|l}\cline{1-1}%
$3$ & \\\hline
$2$ & \multicolumn{1}{|l|}{$3$}\\\hline
\end{tabular}
$ &  &  & \\
&  &  &  &  &  &
\end{tabular}
\]

\[%
\begin{tabular}
[c]{lllllll}
&  &  & $%
\begin{tabular}
[c]{|l|l}\cline{1-1}%
$2$ & \\\hline
$1$ & \multicolumn{1}{|l|}{$1$}\\\hline
\end{tabular}
\ \ \otimes%
\begin{tabular}
[c]{|l|l}\cline{1-1}%
$2$ & \\\hline
$1$ & \multicolumn{1}{|l|}{$1$}\\\hline
\end{tabular}
\ \ $ &  &  & \\
&  & $%
%TCIMACRO{\QATOPD{.}{.}{\text{\QTR{tiny}{1}}^{2}}{\swarrow}}%
%BeginExpansion
\genfrac{.}{.}{0pt}{}{\text{{\tiny 1}}^{2}}{\swarrow}%
%EndExpansion
$ &  & $%
%TCIMACRO{\QATOPD{.}{.}{\text{\QTR{tiny}{2}}^{2}}{\searrow}}%
%BeginExpansion
\genfrac{.}{.}{0pt}{}{\text{{\tiny 2}}^{2}}{\searrow}%
%EndExpansion
$ &  & \\
& $%
\begin{tabular}
[c]{|l|l}\cline{1-1}%
$2$ & \\\hline
$1$ & \multicolumn{1}{|l|}{$2$}\\\hline
\end{tabular}
\ \ \otimes%
\begin{tabular}
[c]{|l|l}\cline{1-1}%
$2$ & \\\hline
$1$ & \multicolumn{1}{|l|}{$2$}\\\hline
\end{tabular}
\ \ \medskip$ &  &  &  & $%
\begin{tabular}
[c]{|l|l}\cline{1-1}%
$3$ & \\\hline
$1$ & \multicolumn{1}{|l|}{$1$}\\\hline
\end{tabular}
\ \ \otimes%
\begin{tabular}
[c]{|l|l}\cline{1-1}%
$3$ & \\\hline
$1$ & \multicolumn{1}{|l|}{$1$}\\\hline
\end{tabular}
\ \ \medskip$ & \\
& \ \ \ \ \ \ {\tiny 2}$^{2}\downarrow$ &  &  &  & \ \ \ \ \ \ {\tiny 1}%
$^{2}\downarrow$ & \\
& $%
\begin{tabular}
[c]{|l|l}\cline{1-1}%
$3$ & \\\hline
$1$ & \multicolumn{1}{|l|}{$3$}\\\hline
\end{tabular}
\ \ \otimes%
\begin{tabular}
[c]{|l|l}\cline{1-1}%
$2$ & \\\hline
$1$ & \multicolumn{1}{|l|}{$2$}\\\hline
\end{tabular}
\ \ \medskip$ &  &  &  & $\ \
\begin{tabular}
[c]{|l|l}\cline{1-1}%
$3$ & \\\hline
$2$ & \multicolumn{1}{|l|}{$2$}\\\hline
\end{tabular}
\ \ \otimes%
\begin{tabular}
[c]{|l|l}\cline{1-1}%
$3$ & \\\hline
$1$ & \multicolumn{1}{|l|}{$1$}\\\hline
\end{tabular}
\medskip$ & \\
& \ \ \ \ \ \ {\tiny 2}$^{2}\downarrow$ &  &  &  & \ \ \ \ \ \ {\tiny 1}%
$^{2}\downarrow$ & \\
& $%
\begin{tabular}
[c]{|l|l}\cline{1-1}%
$3$ & \\\hline
$1$ & \multicolumn{1}{|l|}{$3$}\\\hline
\end{tabular}
\ \ \otimes%
\begin{tabular}
[c]{|l|l}\cline{1-1}%
$3$ & \\\hline
$1$ & \multicolumn{1}{|l|}{$3$}\\\hline
\end{tabular}
\ \ $ &  &  &  & $%
\begin{tabular}
[c]{|l|l}\cline{1-1}%
$3$ & \\\hline
$2$ & \multicolumn{1}{|l|}{$2$}\\\hline
\end{tabular}
\ \ \otimes%
\begin{tabular}
[c]{|l|l}\cline{1-1}%
$3$ & \\\hline
$2$ & \multicolumn{1}{|l|}{$2$}\\\hline
\end{tabular}
\ \ $ & \\
&  & $%
%TCIMACRO{\QATOPD{.}{.}{\text{\QTR{tiny}{1}}^{2}}{\searrow}}%
%BeginExpansion
\genfrac{.}{.}{0pt}{}{\text{{\tiny 1}}^{2}}{\searrow}%
%EndExpansion
$ &  & $%
%TCIMACRO{\QATOPD{.}{.}{\text{\QTR{tiny}{2}}^{2}}{\swarrow}}%
%BeginExpansion
\genfrac{.}{.}{0pt}{}{\text{{\tiny 2}}^{2}}{\swarrow}%
%EndExpansion
$ &  & \\
&  &  & $%
\begin{tabular}
[c]{|l|l}\cline{1-1}%
$3$ & \\\hline
$2$ & \multicolumn{1}{|l|}{$3$}\\\hline
\end{tabular}
\ \ \otimes%
\begin{tabular}
[c]{|l|l}\cline{1-1}%
$3$ & \\\hline
$2$ & \multicolumn{1}{|l|}{$3$}\\\hline
\end{tabular}
\ \ $ &  &  &
\end{tabular}
\ \
\]

\[
{K_{+}\left(
\begin{tabular}
[c]{|l|l}\cline{1-1}%
$2$ & \\\hline
$1$ & \multicolumn{1}{|l|}{$3$}\\\hline
\end{tabular}
\right)  =%
\begin{tabular}
[c]{|l|l}\cline{1-1}%
$3$ & \\\hline
$1$ & \multicolumn{1}{|l|}{$3$}\\\hline
\end{tabular}
,\ ~~K^{-}\left(
\begin{tabular}
[c]{|l|l}\cline{1-1}%
$2$ & \\\hline
$1$ & \multicolumn{1}{|l|}{$3$}\\\hline
\end{tabular}
\right)  =%
\begin{tabular}
[c]{|l|l}\cline{1-1}%
$2$ & \\\hline
$1$ & \multicolumn{1}{|l|}{$2$}\\\hline
\end{tabular}
,~~K_{+}\left(
\begin{tabular}
[c]{|l|l}\cline{1-1}%
$3$ & \\\hline
$1$ & \multicolumn{1}{|l|}{$2$}\\\hline
\end{tabular}
\right)  =%
\begin{tabular}
[c]{|l|l}\cline{1-1}%
$3$ & \\\hline
$2$ & \multicolumn{1}{|l|}{$2$}\\\hline
\end{tabular}
,~~K^{-}\left(
\begin{tabular}
[c]{|l|l}\cline{1-1}%
$3$ & \\\hline
$1$ & \multicolumn{1}{|l|}{$2$}\\\hline
\end{tabular}
\right)  =%
\begin{tabular}
[c]{|l|l}\cline{1-1}%
$3$ & \\\hline
$1$ & \multicolumn{1}{|l|}{$1$}\\\hline
\end{tabular}
.\ }%
\]

%The keys of $B(\lambda)^{\otimes2}$  are the vertices whose weight is in the orbit of $(4,2,0)$ under the action of  $\mathfrak{S}_{3}$.}

\end{example}

\begin{remark}
\

\begin{enumerate}
\item In the previous example, the dilatation of the crystal with $k=2$
suffices to obtain the left and right keys.\ In general, we need to compute
the dilatation with $k$ given by the {  least common multiple of the maximal
lengths of the $i$-chains} with
$i\in\{1,\ldots,n-1\}$ in $B(\lambda)$.

\item The left and right keys associated to a semistandard tableau can be
computed in a more efficient way than the one obtained from Definition
\ref{DefKeys} by using the Jeu de Taquin procedure { \cite{Las1,Fu}}. This was
in fact the initial definition from~\cite{Las1}.\ One can also use the
semi-skyline model { \cite{HHL08, Mas}} to realize the crystal $B(\lambda)$ in
a way which makes the keys very easy to read off (but the crystal structure
becomes then more complicated to describe { \cite{Mas,AE, AO2}}).\ The
advantage of Definition~\ref{DefKeys} is that it is independent of the
realization of the crystal $B(\lambda)$ and strongly connected to general
properties of $\mathfrak{S}_{n}$ viewed as a Coxeter group.

\item In the notation of \S \ \ref{Subsec_representation}, we have
$O(\lambda)=\{u.T_{\lambda}\mid u\in\mathfrak{S}_{n}^{\lambda}\}$. This gives
a direct correspondence between the keys and the elements of $\mathfrak{S}%
_{n}^{\lambda}$. If we denote by $K_{u}$ the key $u.T_{\lambda}$ associated to
$u\in\mathfrak{S}_{n}^{\lambda}$, it then becomes easy to read the Bruhat order.
Indeed, we have $u\leq v$ if and only if for each box of the Young diagram
$\lambda$, the letter obtained in $K_{u}$ is less than or equal to the one obtained in
$K_{v}$.

\item The character $s_{\lambda}$ associated to the partition $\lambda$ is the
Schur function and the tableau realization of crystals allows one to recover
its expression%
\begin{equation}
s_{\lambda}=\sum_{T\in B(\lambda)}x^{\mathrm{wt}{(T)}}. \label{Schur}%
\end{equation}

\end{enumerate}
\end{remark}

\subsubsection{Crystals of Demazure modules}

\label{Subsec_DemaModules}Let $\lambda$ be a partition and  $\sigma
\in\mathfrak{S}_{n}$.\ Up to scalar multiplication, there exists a unique
vector $v_{\sigma\lambda}$ in $V(\lambda)$ of weight $\sigma(\lambda)$. The
Demazure module associated to $v_{\sigma\lambda}$ is the $U(\mathfrak{gl}%
_{n}^{+})$-module defined by
\[
V_{\sigma}(\lambda):=U(\mathfrak{gl}_{n}^{+})\cdot v_{\sigma\lambda}.
\]
Demazure \cite{Dem} introduced the character $\mathrm{\kappa}_{\sigma,\lambda
}$ of $V_{\sigma}(\lambda)$ and showed that it can be computed by applying to
$x^{\lambda}$ a sequence of divided difference operators given by any reduced
decomposition of $\sigma$. More precisely, for any $i\in\{1,\ldots,n-1\}$,
define the linear operator $D_{i}$ on $\mathbb{Z}[x_{1},\ldots,x_{n}]$ by%
\[
D_{i}(P)=\frac{x_{i}P-x_{i+1}(s_{i}\cdot P)}{x_{i}-x_{i+1}}.
\]
Demazure proved that such operators satisfy the relations
\begin{align*}
D_{i}^{2}  &  =D_{i}\text{ for any }i=1,\ldots,n-1\text{,}\\
D_{i}D_{i+1}D_{i}  &  =D_{i+1}D_{i}D_{i+1}\text{ for any }i=1,\ldots
,n-2\text{,}\\
D_{i}D_{j}  &  =D_{j}D_{i}\text{ for any }i,j=1,\ldots,n-1\text{ such that
}\left\vert i-j\right\vert >1\text{.}%
\end{align*}
Thus, given any reduced decomposition $\sigma=s_{i_{1}}\cdots s_{i_{\ell}}$ of
$\sigma$, by Mastumoto's Lemma the operator $D_{\sigma}=D_{i_{1}}\cdots
D_{i_{\ell}}$ only depends on $\sigma$ and not on the chosen reduced decomposition. He also showed that
\[
\mathrm{\kappa}_{\sigma,\lambda}=D_{\sigma}(x^{\lambda})\in\mathbb{Z}%
[x_{1},\ldots,x_{n}]
\]
is the \textsf{(Demazure) character} of $V_{\sigma}(\lambda)$. In particular,
we have $\mathrm{\kappa}_{id,{\lambda}}=x^{\lambda}$ and $\mathrm{\kappa
}_{\sigma_{0,\lambda}}=s_{\lambda}$ and%
\begin{equation}
D_{i}(\mathrm{\kappa}_{\sigma,\lambda})=\left\{
\begin{array}
[c]{l}%
\mathrm{\kappa}_{s_{i}\sigma,\lambda}\text{ if }\ell(s_{i}\sigma)=\ell
(\sigma)+1,\\
\mathrm{\kappa}_{\sigma,\lambda}\text{ otherwise.}%
\end{array}
\right.  \label{D_i(Demazure)}%
\end{equation}
Later Kashiwara \cite{kash} and Littelmann \cite{Lit1} defined a relevant
notion of crystals for the Demazure modules. To this end, for any
$\sigma\in\mathfrak{S}_{n}$, consider the \textsf{Demazure atom}
\[
\overline{\mathrm{B}}_{\sigma}(\lambda)=\{b\in B(\lambda)\mid K_{+}%
(b)=b_{\sigma\lambda}\}.
\]
In particular, $\overline{\mathrm{B}}_{id}(\lambda)=\{b_{\lambda}\}$.

By definition we have $\overline{\mathrm{B}}_{\sigma}(\lambda)=\overline
{\mathrm{B}}_{\sigma^{\prime}}(\lambda)$ whenever $\sigma$ and $\sigma
^{\prime}$ belong to the same left coset of $\mathfrak{S}_{n}/\mathfrak{S}%
_{\lambda}$. Writing $\sigma=uv$ with $u\in\mathfrak{S}_{n}^{\lambda}$ and
$v\in\mathfrak{S}_{\lambda}$, we get $\overline{\mathrm{B}}_{\sigma}%
(\lambda)=\overline{\mathrm{B}}_{u}(\lambda)$ from the characterization of the
strong Bruhat order.\ Thus we can assume that $\sigma$ belongs to
$\mathfrak{S}_{n}^{\lambda}$. We then get $B(\lambda)=%
%TCIMACRO{\tbigsqcup \limits_{\sigma\in\mathfrak{S}_{n}^{\lambda}}}%
%BeginExpansion
{\textstyle\bigsqcup\limits_{\sigma\in\mathfrak{S}_{n}^{\lambda}}}
%EndExpansion
\overline{\mathrm{B}}_{\sigma}(\lambda)$. There also exists a notion of
\textsf{opposite Demazure module}: for any $\sigma\in\mathfrak{S}_{n}$, it is defined
by $V^{\sigma}(\lambda):=U_{q}(\mathfrak{gl}_{n}^{-})\cdot v_{\sigma\lambda}$,
for which it is relevant to define the \textsf{opposite Demazure atom}%
\[
\overline{\mathrm{B}}^{\sigma}(\lambda)=\{b\in B(\lambda)\mid K^{-}%
(b)=b_{\sigma\lambda}\}.
\]
In particular we have $\overline{\mathrm{B}}^{\sigma_{0}}(\lambda)=\{b_{\sigma
_{0}\lambda}\}$.

Given $\sigma$ and $\sigma^{\prime}$ in $\mathfrak{S}_{n}^{\lambda}$, we shall
write $b_{\sigma\lambda}\leq b_{\sigma^{\prime}\lambda}$ when
$\sigma\leq\sigma^{\prime}$ (recall that $\leq$ denotes the strong Bruhat
order on $\mathfrak{S}_{n}$).

\begin{definition}
\label{keys} The \textsf{Demazure crystal} $\mathrm{B}_{\sigma}(\lambda)$ and \textsf{opposite
Demazure crystal} $\mathrm{B}^{\sigma}(\lambda)$ are defined by
\begin{align}
\mathrm{B}_{\sigma}(\lambda)  &  =%
%TCIMACRO{\tbigsqcup \limits_{\sigma^{\prime}\in\mathfrak{S}_{n}^{\lambda
%},\sigma^{\prime}\leq\sigma}}%
%BeginExpansion
{\textstyle\bigsqcup\limits_{\sigma^{\prime}\in\mathfrak{S}_{n}^{\lambda
},\sigma^{\prime}\leq\sigma}}
%EndExpansion
\overline{\mathrm{B}}_{\sigma^{\prime}}(\lambda)=\{b\in B(\lambda)\mid
K_{+}(b)\leq b_{\sigma\lambda}\},\label{B_w(lambda)}\\
\mathrm{B}^{\sigma}(\lambda)  &  =%
%TCIMACRO{\tbigsqcup \limits_{\sigma^{\prime}\in\mathfrak{S}_{n}^{\lambda
%},\sigma\leq\sigma^{\prime}}}%
%BeginExpansion
{\textstyle\bigsqcup\limits_{\sigma^{\prime}\in\mathfrak{S}_{n}^{\lambda
},\sigma\leq\sigma^{\prime}}}
%EndExpansion
\overline{\mathrm{B}}^{\sigma^{\prime}}(\lambda)=\{b\in B(\lambda)\mid
K^{-}(b)\geq b_{\sigma\lambda}\},\nonumber
\end{align}

{In particular we have $\mathrm{B}_{id}(\lambda)=\{b_{\lambda}\}$, $\mathrm{B}%
^{\sigma_{0}}(\lambda)=\{b_{\sigma_{0}\lambda}\}$ and $\mathrm{B}_{\sigma_{0}%
}(\lambda)=\mathrm{B}(\lambda)=\mathrm{B}^{id}(\lambda)$.}
\end{definition}

%\begin{thomas} I moved the following paragraph + Example 2.15 here since it seems to me that we only need definition 2.14 for this. \end{thomas}

To compute the Demazure crystal
$\mathrm{B}_{\sigma}(\lambda)$, it therefore suffices to

\begin{itemize}
	\item compute the key map $K_{+}$ on $B(\lambda)$.
	
	\item compute the strong Bruhat order on $\mathfrak{S}_{n}^{\lambda}$, or
	alternatively on the vertices of $O(\lambda)$.
\end{itemize}

\begin{example}
	Let us resume Example \ref{Example-Dilatation} with the tableaux model.\ For
	$n=3$ and $\lambda=(2,1,0)$, consider $\sigma=s_{1}s_{2}$. We get%
	\[
	K_{+}(T_{s_{1}s_{2}})=%
	\begin{tabular}
		[c]{|l|l}\cline{1-1}%
		$3$ & \\\hline
		$2$ & \multicolumn{1}{|l|}{$2$}\\\hline
	\end{tabular}
	\]
	and $\mathrm{B}_{\sigma}(\lambda)$ contains exactly the tableaux $T$ such that
	$K_{+}(T)\leq K_{+}(T_{s_{1}s_{2}})$ (recall that this means that each entry in $T$
	is less than or equal to its corresponding entry in $T_{s_{1}s_{2}}$). These are
	all the tableaux in $B(\lambda)$ except%
	\[
	T_{1}=%
	\begin{tabular}
		[c]{|l|l}\cline{1-1}%
		$2$ & \\\hline
		$1$ & \multicolumn{1}{|l|}{$3$}\\\hline
	\end{tabular}
	,\text{ }T_{2}=%
	\begin{tabular}
		[c]{|l|l}\cline{1-1}%
		$3$ & \\\hline
		$1$ & \multicolumn{1}{|l|}{$3$}\\\hline
	\end{tabular}
	\text{ and }T_{3}=%
	\begin{tabular}
		[c]{|l|l}\cline{1-1}%
		$3$ & \\\hline
		$2$ & \multicolumn{1}{|l|}{$3$}\\\hline
	\end{tabular}
	\]
	for which we have
	\[
	K_{+}(T_{1})=T_{2}, K_{+}(T_{2})=T_{2}\text{ and }K_{+}(T_{3})=T_{3}.
	\]
	
\end{example}

The following theorem gathers results established by Kashiwara and
Littelmann (see Assertion 2 of Proposition 9.1.3 and Theorem 9.2.4 in
\cite{kash2}). For convenience, we extend $\tilde{f}_{i}$ and $\tilde{e}_{i}$,
$i\in\{1,\ldots,n-1\}$, to $B(\lambda)\sqcup\{0\}$ by setting them to map $0$
to $0$.

\begin{theorem}
\label{Th_KL} Let $\lambda\in\mathcal{P}_{n}$.

\begin{enumerate}
\item We have $\mathrm{\kappa}_{\sigma,\lambda}=\sum_{b\in\mathrm{B}_{\sigma
}(\lambda)}x^{\mathrm{wt}(b)}$.

\item For any reduced decomposition $s_{i_{1}}\cdots s_{i_{\ell}}$ of $\sigma$,
we have
\[
\mathrm{B}_{\sigma}(\lambda)=\{\tilde{f}_{i_{1}}^{k_{1}}\cdots\tilde{f}%
_{i_{\ell}}^{k_{\ell}}(b_{\lambda})\mid(k_{1},\ldots,k_{\ell})\in\mathbb{Z}_{\geq
0}^{\ell}\}\setminus\{0\}.
\]

\item For any $i$-chain $C$ in $B(\lambda)$ and any $\sigma\in\mathfrak{S}%
_{n}$, only the three following situations can appear%
\[
C\cap\mathrm{B}_{\sigma}(\lambda)=\emptyset,\quad C\cap\mathrm{B}_{\sigma
}(\lambda)=C\text{ or }C\cap\mathrm{B}_{\sigma}(\lambda)=s(C),
\]
where we recall that $S(C)$ denotes the source vertex of the chain $C$.
\end{enumerate}
\end{theorem}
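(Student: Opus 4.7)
The plan is to prove parts~(2) and~(3) simultaneously by induction on $\ell(\sigma)$, and then deduce part~(1) from a character computation using the Demazure recursion~\eqref{D_i(Demazure)}. The base case $\sigma = \mathrm{id}$ is immediate: $\mathrm{B}_{\mathrm{id}}(\lambda) = \{b_\lambda\}$, so every $i$-chain either has $b_\lambda$ as its source (giving $\{s(C)\}$) or misses it entirely (giving $\emptyset$), and $\sum_{b \in \{b_\lambda\}} x^{\mathrm{wt}(b)} = x^\lambda = \mathrm{\kappa}_{\mathrm{id}, \lambda}$.

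For the induction step, write $\sigma = s_i \tau$ with $\ell(\sigma) = \ell(\tau) + 1$. The central intermediate claim is the crystal-theoretic recursion
\[
\mathrm{B}_\sigma(\lambda) = \bigcup_{k \geq 0} \tilde{f}_i^{\,k}\bigl(\mathrm{B}_\tau(\lambda)\bigr) \setminus \{0\}.
\]
For the inclusion $\supseteq$, I would establish a compatibility lemma between $K_+$ and $\tilde{f}_i$: applying $\tilde{f}_i$ to $b$ either leaves $K_+(b)$ unchanged or replaces it by the Weyl-group action of $s_i$ on $K_+(b)$; in either case $K_+(b) \leq b_{\tau \lambda}$ forces $K_+(\tilde{f}_i(b)) \leq b_{\sigma \lambda}$, using the characterization of the strong Bruhat order on $\mathfrak{S}_n^\lambda$ via keys. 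For the inclusion $\subseteq$, given $b \in \mathrm{B}_\sigma(\lambda)$, iterating $\tilde{e}_i$ must eventually land in $\mathrm{B}_\tau(\lambda)$; this is obtained by tracking how $K_+$ moves along the $i$-chain and invoking the induction hypothesis.

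Granting the recursion, part~(3) at $\sigma$ follows. For an $i$-chain $C$, the $\tilde{f}_i$-closure operation fills out the remainder of any chain that already meets $\mathrm{B}_\tau(\lambda)$, so the induction hypothesis (which gives $\emptyset$, $\{s(C)\}$, or $C$ for $C \cap \mathrm{B}_\tau(\lambda)$) yields $\emptyset$ or $C$ for $C \cap \mathrm{B}_\sigma(\lambda)$. For an $i'$-chain with $i' \neq i$, one must check that the $\tilde{f}_i$-closure preserves the three-case property; this is the technically delicate point, and depends on the local structure of the crystal at crossings of $i$- and $i'$-chains, governed by the tensor product rule~\eqref{tens_crys} and the braid/commutation relations between Kashiwara operators. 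I expect this case analysis, particularly when $|i - i'| = 1$, to be the main obstacle.

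For part~(1), set $\chi_\sigma := \sum_{b \in \mathrm{B}_\sigma(\lambda)} x^{\mathrm{wt}(b)}$. Using part~(3), decompose $\chi_\sigma$ as a sum of contributions indexed by $i$-chains, each contribution being either $0$, a single monomial $x^{\mathrm{wt}(s(C))}$, or the full $s_i$-symmetric sum along the chain. A direct computation shows that $D_i$ sends the middle contribution to the third one and fixes the other two. Combined with the recursion from part~(2), this yields $\chi_\sigma = D_i \chi_\tau$ whenever $\ell(s_i \tau) = \ell(\tau) + 1$, matching~\eqref{D_i(Demazure)}. Induction on $\ell(\sigma)$ together with the base case $\chi_{\mathrm{id}} = x^\lambda = \mathrm{\kappa}_{\mathrm{id},\lambda}$ then yields $\chi_\sigma = \mathrm{\kappa}_{\sigma, \lambda}$.
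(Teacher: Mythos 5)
First, a point of reference: the paper does not prove Theorem~\ref{Th_KL} at all; it quotes it from Kashiwara \cite{kash2} (Assertion 2 of Proposition 9.1.3 and Theorem 9.2.4) and Littelmann. So you are attempting to reprove a deep theorem rather than reconstruct an argument in the text. Your architecture is the standard one: the recursion $\mathrm{B}_{s_i\tau}(\lambda)=\bigcup_{k\geq 0}\tilde f_i^{\,k}(\mathrm{B}_\tau(\lambda))\setminus\{0\}$, a simultaneous induction for parts (2) and (3), and the derivation of part (1) by summing the action of $D_i$ chain by chain. Given parts (2) and (3), your computation for part (1) is correct, and your ``$\supseteq$'' direction (via $K_+(\tilde f_i(b))\in\{K_+(b),\,s_iK_+(b)\}$ together with the lifting property of the Bruhat order) is sound, modulo the fact that the key-compatibility lemma itself needs a proof from Theorem~\ref{Th_Dila} and the tensor product rule.

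The genuine gap is exactly the step you flag as ``the main obstacle'', and it is not a technicality that a local case analysis can close. Two difficulties are hidden there. First, the inclusion $\mathrm{B}_{s_i\tau}(\lambda)\subseteq\bigcup_{k}\tilde f_i^{\,k}(\mathrm{B}_\tau(\lambda))$ is essentially equivalent to the string property you are simultaneously trying to establish, so the induction as organized is circular unless one of the two statements is obtained by independent means. Second, and more seriously, the preservation of the three-case property on $i'$-chains with $|i-i'|=1$ under the closure $\bigcup_k\tilde f_i^{\,k}$ is \emph{false} for an arbitrary subset satisfying the string property; it holds for Demazure crystals only because of additional global input. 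In Kashiwara's proof that input is the compatibility of the Demazure module $V_\sigma(\lambda)$ with the global (canonical) basis; in Littelmann's proof it is the concatenation structure of LS paths; in type $A$ one can substitute explicit key/tableau combinatorics. The purely local relations between $\tilde e_i,\tilde f_{i'}$ (the tensor product rule and Stembridge-type relations) do not control how $K_+$ propagates along an $i'$-string inside $\bigcup_k\tilde f_i^{\,k}(\mathrm{B}_\tau(\lambda))$. So your proposal is a correct road map whose load-bearing step is missing and cannot be supplied by the means you indicate; you would need to import one of these external inputs, or simply cite \cite{kash2} as the paper does.
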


\begin{remark}
\label{rmk_chains} By the previous theorem, {for any $\sigma\in\mathfrak{S}%
_{n}$ and $i\in\{1,\ldots,n-1\}$ such that $\ell(s_{i}\sigma)=\ell(\sigma)+1$
and $s_{i}\sigma\lambda\neq\sigma\lambda$, we have $\mathrm{B}_{\sigma
}(\lambda)\subset\mathrm{B}_{s_{i}\sigma}(\lambda)$. Moreover, for any
$i$-string $C\subseteq\mathrm{B}(\lambda)$, either $\mathrm{B}_{s_{i}\sigma
}(\lambda)\cap C=\mathrm{B}_{\sigma}(\lambda)\cap C=\emptyset$, $\mathrm{B}%
_{s_{i}\sigma}(\lambda)\cap C=\mathrm{B}_{\sigma}(\lambda)\cap C=C$, or
$s(C)=\mathrm{B}_{\sigma}(\lambda)\cap C$ in which case $C\subseteq
\mathrm{B}_{s_{i}\sigma}(\lambda)$.}
\end{remark}

%\begin{theorem}
%\begin{enumerate}
%\item If $s(C)$ is the source of an $i$-string $C\subseteq\mathrm{B}(\lambda
%)$, the Demazure operator $D_{i}$ sends $x^{s(C)}$ to the sum of all $x^{b}$
%with $b$ vertices of the string $C$, that is,
%\begin{equation}
%D_{i}(x^{s(C)})=\sum_{b\in C}x^{b},\text{ and hence }D_{i}\bigg(\sum_{b\in
%C}x^{b}\bigg)=D_{i}(x^{s(C)}),\nonumber
%\end{equation}

%and
%\[
%\sum_{b\in{\mathrm{B}}_{s_{i}\sigma}(\lambda)\cap C}x^{b}=D_{i}\bigg(\sum
%_{b\in{\mathrm{B}}_{\sigma}(\lambda)\cap C}x^{b}\bigg),\sum_{b\in{\mathrm{B}%
%}_{s_{i}\sigma}(\lambda)}x^{b}=D_{i}\bigg(\sum_{b\in\mathrm{B}_{\sigma
%}(\lambda)}x^{b}\bigg)=D_{i}(\mathrm{\kappa}_{\sigma,\lambda})=\mathrm{\kappa
%}_{\pi_{i}(\sigma\lambda)}.
%\]

%\item For any $\sigma\in\mathfrak{S}_{n}$ and $i\in\{1,\ldots,n-1\}$, we have%

%\[%
%TCIMACRO{\tbigcup \limits_{k\geq0}}%
%BeginExpansion
%{\textstyle\bigcup\limits_{k\geq0}}
%EndExpansion
%\tilde{f}_{i}^{k}\mathrm{B}_{\sigma}(\lambda)\setminus\{0\}=\{f_{i}%
%^{k}(b):b\in\mathrm{B}_{\sigma}(\lambda),k\geq0,e_{i}(b)=0\}\setminus
%\{0\}=\mathrm{B}_{\pi_{i}({\sigma}\lambda)}.
%\]

%{Note that when $s_{i}\sigma\lambda\neq\sigma\lambda$, ${\textstyle\bigcup
%\limits_{k\geq0}}\tilde{f}_{i}^{k}\mathrm{B}_{\sigma}(\lambda)\setminus
%\{0\}=\mathrm{B}_{s_{i}\sigma}(\lambda)$ if only if $\ell(s_{i}\sigma
%)=\ell(\sigma)+1$.}
%\end{enumerate}
%\end{theorem}

\subsubsection{Additional remarks}

\label{Rq_Involution}

\begin{enumerate}
\item The computation of the key map on $B(\lambda)$ from the definition by
dilatation of crystals becomes quickly untractable when $\lambda$ is far
enough in the interior of the Weyl chamber.\ But as explained in
\S \ \ref{Subsubsec_tableaux} it becomes much easier if we use the tableaux
realization of crystals.

\item One can also define the \textsf{Demazure atom polynomials}
$\overline{\mathrm{\kappa}}_{\sigma,\lambda}=\sum_{b\in\overline{\mathrm{B}%
}_{\sigma}(\lambda)}x^{\mathrm{wt}(b)}$. In fact, they can also be obtained
without using the crystal theory directly from the {linear} operators
$D_{i}^{\prime}=D_{i}-id,i=1,\ldots,n-1$. These operators still satisfy the
braid relations, but here $(D_{i}^{\prime})^{2}=-D_{i}^{\prime}$ (see~
\cite{Las2}). Then for any reduced decomposition $\sigma=\sigma_{i_{1}}%
\cdots\sigma_{i_{\ell}}$, we have%
\[
\overline{\mathrm{\kappa}}_{\sigma,\lambda}:=D_{i_{1}}^{\prime}\cdots
D_{i_{\ell}}^{\prime}(x^{\lambda})=\sum_{b\in\overline{\mathrm{B}}_{\sigma
}(\lambda)}x^{\mathrm{wt}(b)}.
\]

\item Rather than labeling the Demazure crystals and the Demazure characters
of $B(\lambda)$ by elements of $\mathfrak{S}_{n}^{\lambda}$, it is often
convenient to label them directly by the elements of the orbit $\mathfrak{S}%
_{n}\lambda$. Given $\mu\in\mathfrak{S}_{n}\lambda$ such that $\mu
=\sigma\lambda$ with $\sigma\in\mathfrak{S}_{n}^{\lambda}$, we will write
$\mathrm{B}_{\mu},$ $\mathrm{B}^{\mu}$ instead of $\mathrm{B}_{\sigma}%
(\lambda),$ $\mathrm{B}^{\sigma}(\lambda)$ and $\mathrm{\kappa}_{\mu
},\overline{\mathrm{\kappa}}_{\mu}$ instead of $\mathrm{\kappa}_{\sigma
,\lambda}$ and $\overline{\mathrm{\kappa}}_{\sigma,\lambda}$\footnote{This
notation should not be confused with the subset consisting of those vertices in $B(\lambda)$
with weight $\mu$ sometimes also denoted $B(\lambda)_{\mu}$ in the
literature.}. Note that $\mathrm{\kappa}_{\sigma_{0}\lambda}=\mathrm{s}%
_{\lambda}$.
%\begin{olga}
%This notation is not consistent because when $\mu=id\lambda$ one has
%$s_{\lambda}=s_{\lambda,id}=x^{\lambda}$ which is not correct. I suggest to
%use $\kappa$ instead of $s$ in \eqref{demaz} as usual. Then one has
%$\kappa_{\sigma_{0},\lambda}=\kappa_{\sigma_{0}\lambda}=s_{\lambda}$.
%An alternative is to write $\mu=\sigma\sigma_{0}(\sigma_{0}\lambda)$, that is,
%instead of considering the highest weight we choose the lowest weight for the
%orbit, $\mu\in\mathfrak{S}_{n}(\sigma_{0}\lambda)$. Then $s_{\lambda
%}=s_{\lambda,\sigma_{0} id}$. This seems to me an unnecessary detour.
%\end{olga}

\item {Demazure characters $\{\kappa_{\mu}:$ $\mu\in$ $\mathbb{N}^{n}\}$ and
Demazure atoms $\{\bar{\kappa}_{\mu}:$ $\mu$ $\in\mathbb{N}^{n}\}$ both form
linear $\mathbb{Z}$-bases for $\mathbb{Z}[x_{1},\ldots,x_{n}]$.
%The change of basis from the first to the second is expressed in \eqref{bruhatdec2}.
The operators $D_{i}$ act on Demazure characters $\kappa_{\mu}$ via elementary
bubble sort operators $\pi_{i}$ on the entries of the weak composition
$\mu=(\mu_{1},\dots,\mu_{n})$ as follows
\begin{equation}
D_{i}(\kappa_{\mu})=%
\begin{cases}
\kappa_{s_{i}\mu} & \mbox{if }\mu_{i}>\mu_{i+1}\\
\kappa_{\mu} & \mbox{if }\mu_{i}\leq\mu_{i+1}%
\end{cases}
\Leftrightarrow D_{i}(\kappa_{\mu})=\kappa_{\pi_{i}(\mu)}. \label{dempro}%
\end{equation}
}

\item We will adopt the usual convention of {\cite{Las2}}, identifying each
$\mu\in\mathbb{Z}^{n}$ such that $\mu_{m+1}=\cdots=\mu_{n}=0$ with $(\mu
_{1},\ldots,\mu_{m})\in\mathbb{Z}^{m}$. This notation is compatible with the
definition of the Demazure characters since for any $\mu\in\mathbb{Z}^{m}$, we
have $\mathrm{s}_{\mu}(x_{1},\ldots,x_{m})=\mathrm{s}_{\mu}(x_{1},\ldots
,x_{n})$. It is also compatible with the tableaux realization of the crystals
because for any such $\mu\in\mathbb{Z}^{m}$, the Demazure crystal
$\mathrm{B}_{\mu}(\lambda)$ only contains tableaux with letters in
$\{1,\ldots,m\}$.

\item The Demazure and opposite Demazure crystals and atoms can be connected
using the Lusztig-Sch\"{u}tzenberger involution on the crystal $B(\lambda)$
defined as follows. Let $\sigma_{0}$ be the longest element of $\mathfrak{S}%
_{n}$ (defined by $\sigma_{0}(i)=n+1-i$ for any $i=1,\ldots,n$). For any
$b=\tilde{f}_{i_{1}}\cdots\tilde{f}_{i_{r}}(b_{\lambda})$, set $\mathrm{\iota
}(b)=\tilde{e}_{n-i_{1}}\cdots\tilde{e}_{n-i_{r}}(b_{\sigma_{0}\lambda})$
{where }$\mathrm{wt}${$(\iota b)=\sigma_{0}$}$\mathrm{wt}${$(b)$}. One can
prove that the map $\mathrm{\iota}$ is an involution on $B(\lambda)$ reversing
the arrows and flipping the labels $i$ and $n-i$, {and reversing the weight}.
We then have $K^{-}(b)=\sigma_{0}.K_{+}(\mathrm{\iota}(b))$. This implies
that, for any reduced decomposition $\sigma_0\sigma=s_{i_{1}}\cdots s_{i_{\ell}}%
\in\mathfrak{S}_{n}^{\lambda}$, we get
\begin{align}
\mathrm{B}^{\sigma}(\lambda)  &  =\mathrm{\iota}(\mathrm{B}_{\sigma_{0}\sigma
}(\lambda))={\{\tilde{e}_{n-i_{1}}^{k_{1}}\cdots\tilde{e}_{n-i_{\ell}%
}^{k_{\ell}}(b_{\sigma_{0}\lambda})\mid(k_{1},\ldots,k_{\ell})\in
\mathbb{Z}_{\geq0}^{\ell}\}\setminus\{0\}}\text{ and }\label{Demaz_Updow1}\\
\overline{\mathrm{B}}^{\sigma}(\lambda)  &  =\mathrm{\iota}(\overline
{\mathrm{B}}_{\sigma_{0}\sigma}(\lambda)). \label{Demaz_Updow2}%
\end{align}

\item There is also a notion of \textsf{opposite Demazure character}
$\mathrm{\kappa}_{\lambda}^{\sigma}$ for the opposite Demazure module
$V^{\sigma}(\lambda)$. It satisfies $\mathrm{\kappa}_{\lambda}^{\sigma}%
=\sum_{b\in\mathrm{B}^{\sigma}(\lambda)}x^{\mathrm{wt}(b)}$ and using the
involution $\mathrm{\iota}${ and \eqref{Demaz_Updow1}}, we have in fact%
\[
\mathrm{\kappa}_{\lambda}^{\sigma}(x_{1},\ldots,x_{n})=\mathrm{\kappa}^{\mu
}(x_{1},\ldots,x_{n})=\mathrm{\kappa}_{\sigma_{0}\mu}(x_{n},\ldots,x_{1})
\]
%\begin{thomas} $\kappa^\mu$ is another notation for $\kappa_\lambda^\sigma$; maybe we should say it.\end{thomas}
where $\mu=\sigma\lambda$. Since $\overline{\mathrm{B}}^{\sigma}%
(\lambda)=\mathrm{\iota}(\overline{\mathrm{B}}_{\sigma_{0}\sigma}(\lambda))$
we similarly have
\[
\overline{\mathrm{\kappa}}_{\lambda}^{\sigma}(x_{1},\ldots,x_{n}%
)=\overline{\mathrm{\kappa}}^{\mu}(x_{1},\ldots,x_{n})=\overline
{\mathrm{\kappa}}_{\sigma_{0}\mu}(x_{n},\ldots,x_{1})=\sum_{b\in
\overline{\mathrm{B}}^{\sigma}(\lambda)}x^{\mathrm{wt}(b)}.
\]

\end{enumerate}

\subsection{Bicrystals and RSK correspondence}

\label{SubsecRSK}{ Let} $m$ and $n$ be two positive integers. Denote by
$\mathcal{M}_{m,n}$ the set of matrices with $m$ rows and $n$ columns with entries in~$\mathbb{Z}_{\geq 0}$. The set $\mathcal{M}_{m,n}$ is endowed with the
structure of a $(\mathfrak{gl}_{m},\mathfrak{gl}_{n})$-bicrystal. This means
that we can define on $\mathcal{M}_{m,n}$ two commuting\footnote{\textit{i.e.}, two
operators chosen in each family commute with each other.} families of crystal
operators $\tilde{e}_{i},\tilde{f}_{i},i=1,\ldots,m-1$ and $\hat{e}_{j}%
,\hat{f}_{j},j=1,\ldots,n-1$ so that $\mathcal{M}_{m,n}$ is a crystal for both
$\mathfrak{gl}_{m}$ and $\mathfrak{gl}_{n}$. In fact $\mathcal{M}_{m,n}$ is
the crystal of the $(\mathfrak{gl}_{m},\mathfrak{gl}_{n})$-module of the
symmetric space $S(\mathbb{C}^{m}\times\mathbb{C}^{n})$ (see{
 \cite{CK, DK,Leeuw}}).

One can define the crystal operators directly on $\mathcal{M}_{m,n}$ or from
the RSK correspondence. This is a bijection
\[
\psi:\left\{
\begin{array}
[c]{c}%
\mathcal{M}_{m,n}\rightarrow%
%TCIMACRO{\tbigsqcup \limits_{\lambda\in\mathcal{P}_{\min(m,n)}}}%
%BeginExpansion
{\textstyle\bigsqcup\limits_{\lambda\in\mathcal{P}_{\min(m,n)}}}
%EndExpansion
B_{m}(\lambda)\times B_{n}(\lambda)\\
A\longmapsto(P(A),Q(A))
\end{array}
\right.
\]
where we use the tableaux realization\footnote{Here we have written
$B_{m}(\lambda)$ and $B_{n}(\lambda)$ to make apparent the fact that we have a
$\mathfrak{gl}_{m}\times\mathfrak{gl}_{n}$-crystal.} of crystals so that
$P(A)$ and $Q(A)$ are semistandard tableaux with the same shape on the
alphabets $\{1,\ldots,m\}$ and $\{1,\ldots,n\}$, respectively. We refer to
\cite{Fu} for a complete description of the combinatorial procedure
(illustrated in the example below) based on the Schensted column insertion
procedure\footnote{The convention that we use agrees with that of \cite{KN} to
which we refer for another description of the RSK\ procedure and the
connection with biwords.}.

\begin{example}
\label{exple_suite} \label{Ex_RSK}Assume $m=4$ and $n=3$ and consider the
matrix%
\[
A=\left(
\begin{array}
[c]{ccc}%
2 & 2 & 0\\
1 & 0 & 1\\
2 & 1 & 1\\
0 & 1 & 1
\end{array}
\right)  .
\]
It can first be encoded as a tensor product of $n=3$ row tableaux on the
alphabet $\{1,2,3,4\}$ where $m_{i,j}$ gives the number of letters $i$ in the
$j$-th component of the tensor product:
\[
L_{A}=%
\begin{tabular}
[c]{|l|l|l|l|l|}\hline
$1$ & $1$ & $2$ & $3$ & $3$\\\hline
\end{tabular}
\ \ \ \otimes%
\begin{tabular}
[c]{|l|l|l|l|}\hline
$1$ & $1$ & $3$ & $4$\\\hline
\end{tabular}
\ \ \ \otimes%
\begin{tabular}
[c]{|l|l|l|}\hline
$2$ & $3$ & $4$\\\hline
\end{tabular}
\ \ \ .
\]
One then applies the column insertion procedure from left to right. This means
that we begin by reading the second column (this gives $4311$ with the
convention of \S \ \ref{Subsubsec_tableaux}) and then compute the column
insertions
\[
1\rightarrow1\rightarrow3\rightarrow4\rightarrow%
\begin{tabular}
[c]{|l|l|l|l|l|}\hline
$1$ & $1$ & $2$ & $3$ & $3$\\\hline
\end{tabular}
\ \ \ \text{.}%
\]
We thus get the tableau%
\[%
\begin{tabular}
[c]{|l|l|lllll}\cline{1-2}%
$3$ & $4$ &  &  &  &  & \\\hline
$1$ & $1$ & $1$ & \multicolumn{1}{|l}{$1$} & \multicolumn{1}{|l}{$2$} &
\multicolumn{1}{|l}{$3$} & \multicolumn{1}{|l|}{$3$}\\\hline
\end{tabular}
\ \ \ \ \ \ \
\]
in which we successively insert the letters corresponding to the reading $432$
of the third row. This gives the tableau
\[
P(A)=%
\begin{tabular}
[c]{|l|llllll}\cline{1-1}%
$4$ &  &  &  &  &  & \\\cline{1-1}\cline{1-4}%
$2$ & $3$ & \multicolumn{1}{|l}{$3$} & \multicolumn{1}{|l}{$4$} &
\multicolumn{1}{|l}{} &  & \\\hline
$1$ & $1$ & \multicolumn{1}{|l}{$1$} & \multicolumn{1}{|l}{$1$} &
\multicolumn{1}{|l}{$2$} & \multicolumn{1}{|l}{$3$} & \multicolumn{1}{|l|}{$3$%
}\\\hline
\end{tabular}
\ \ \ \ \ .
\]
The so-called "recording tableau" $Q(A)$ is obtained by filling {with} letters
$i$ the new boxes appearing during the insertion of row $i$ (the first row
being considered as inserted in the empty tableau at the beginning of the
procedure).\ We thus get%
\[
Q(A)=%
\begin{tabular}
[c]{|l|llllll}\cline{1-1}%
$3$ &  &  &  &  &  & \\\cline{1-1}\cline{1-4}%
$2$ & $2$ & \multicolumn{1}{|l}{$3$} & \multicolumn{1}{|l}{$3$} &
\multicolumn{1}{|l}{} &  & \\\hline
$1$ & $1$ & \multicolumn{1}{|l}{$1$} & \multicolumn{1}{|l}{$1$} &
\multicolumn{1}{|l}{$1$} & \multicolumn{1}{|l}{$2$} & \multicolumn{1}{|l|}{$2$%
}\\\hline
\end{tabular}
.
\]
Finally%
\[
\psi(A)=(P(A),Q(A)).
\]
Observe that we also have%
\[
\psi(^{t}A)=(Q(A),P(A))
\]
where $^{t}A$ is the transpose of the matrix $A$.
\end{example}

In any matrix $A$ in $\mathcal{M}_{m,n}$, one can consider all the paths $\pi$
starting at position $(i,j)=(1,n)$ (northeast corner of $A$) and ending at
position $(i,j)=(m,1)$ (southwest corner of $A$) where the authorized steps
have the form $(i,j)\rightarrow(i+1,j)$ or $(i,j)\rightarrow(i,j-1)$.\ To any
path $\pi$, we associate its \textsf{time} $t(\pi)$, given by the sum of the
entries along the path. Here, one can imagine that the path stops for a
duration $a_{i,j}$ at position $(i,j)$. We set%
\[
p(A)=\max_{\pi\text{ path in }A}t(\pi).
\]
The following theorem gathers a few results about the RSK correspondence that
we shall use later.

\begin{theorem}
\ \label{Th_RSK}

\begin{enumerate}
\item The map $\psi$ is bijective.

\item For any matrix $A$ in $\mathcal{M}_{m,n}$ we have $P(^{t}A)=Q(A)$ and
$Q(^{t}A)=P(A)$.

\item $\mathcal{M}_{m,n}$ has the structure of a bicrystal: given
$A\in\mathcal{M}_{m,n}$, the action of the operators $\widetilde{o}=\tilde
{e}_{i},\tilde{f}_{i},i=1,\ldots,m-1$ and $\widehat{o}=\hat{e}_{j}, \hat
{f}_{j},j=1,\ldots,n-1$ satisfies%
\[
\widetilde{o}(A)=\psi^{-1}(\widetilde{o}P(A),Q(A))\text{ and }\widehat
{o}(A)=\psi^{-1}(P(A),\widehat{o}Q(A)).
\]

\item For any matrix $A$, the integer $p(A)$ is equal to the {length of the}
longest row of the tableau $P(A)$ (or $Q(A)$). It also equals the length of a
longest decreasing sequence of the word read off from $L_{A}$.
\end{enumerate}
\end{theorem}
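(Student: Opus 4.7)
The plan is to handle the four assertions of Theorem~\ref{Th_RSK} as classical facts about the RSK correspondence; the bulk of the work is to reconcile them with the particular conventions of column insertion and biword encoding $L_A$ used in this paper. For assertion~(1), I would construct $\psi^{-1}$ by reverse column insertion: reading the boxes of $Q(A)$ in decreasing order of their labels (within each block of equal labels, from right to left), each step identifies a corner box of $P$ whose entry is ``un-inserted'' through the columns to its left, producing a letter and a strictly smaller pair $(P',Q')$; iterating recovers the biword and hence the matrix $A$, with each step visibly invertible.

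For assertion~(2), the cleanest route is through Fomin's growth diagrams: one encodes $A$ as a labelling of an $(m{+}1)\times(n{+}1)$ grid whose vertices carry partitions and whose cells satisfy local growth rules determined by the entries of $A$, then reads off $P(A)$ along one boundary and $Q(A)$ along the other. Since the local rule is symmetric under reflecting the grid along its diagonal, and this reflection exchanges the two boundaries, one obtains $P({}^tA)=Q(A)$ and $Q({}^tA)=P(A)$.

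For assertion~(3), I would define crystal operators on $\mathcal{M}_{m,n}$ directly on the biword $L_A$: the $\mathfrak{gl}_m$ operators $\tilde{o}$ act through the tensor crystal rule on the letters of $L_A$, while $\hat{o}$ act through the analogous rule on the column-index word (equivalently, through $L_{{}^tA}$). A key tool is the Lascoux--Sch\"utzenberger theorem that crystal operators on a word commute with column insertion, so $\tilde{o}$ changes $P(A)$ according to the crystal structure on tableaux and leaves $Q(A)$ unchanged; using assertion~(2), the analogous statement holds for $\hat{o}$ with the roles swapped, and the two families commute because they act on different components of $(P(A),Q(A))$.

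Finally, for assertion~(4), I would invoke Greene's theorem: the length of the longest row of $P(A)$ equals the length of a longest weakly increasing (dually, strictly decreasing) subsequence of the reading word of $L_A$. With the conventions in force, each such monotone subsequence corresponds exactly to a path in $A$ from $(1,n)$ to $(m,1)$ with the prescribed steps, the letters selected summing to $t(\pi)$, and taking the maximum yields the claim. The main obstacle in this entire program is not any single step, each being standard, but ensuring that the combinatorial conventions (strict versus weak monotonicity, column versus row insertion, direction of paths in $A$) line up consistently throughout.
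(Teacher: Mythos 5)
The paper offers no proof of Theorem~\ref{Th_RSK}: it is presented as a compendium of classical facts about RSK, with the reader referred to \cite{Fu} and \cite{KN} for the insertion procedure and to \cite{CK,DK,Leeuw} for the bicrystal structure. Your outline is therefore not competing with an argument in the text; it is a reasonable summary of how these facts are established in the literature (reverse insertion for bijectivity, growth diagrams for the symmetry $P({}^tA)=Q(A)$, coplactic invariance of the recording tableau for the bicrystal structure, Greene's theorem for assertion~(4)), and parts (1)--(3) are fine as sketched.

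There is, however, a concrete error in your treatment of assertion~(4), and it is precisely the convention issue you flagged as the danger point. With the paper's conventions, $w(L_A)$ is read with the factors in column order $j=1,\dots,n$ and each row tableau read right to left, so the letter $i$ occurring in the $j$-th factor records the entry $a_{i,j}$. A subsequence of $w(L_A)$ that is \emph{weakly decreasing} selects positions $(i,j)$ with $j$ weakly increasing and $i$ weakly decreasing, and a maximal such chain is exactly a reversed path from $(1,n)$ to $(m,1)$ with steps $\longleftarrow$ or $\downarrow$, each position contributing all $a_{i,j}$ of its letters; this matches the paper's own example, where the extracted subsequence $3,3,2,1,1,1,1$ is weakly (not strictly) decreasing. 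Your proposed invariant --- ``longest weakly increasing (dually, strictly decreasing) subsequence'' --- breaks the argument both ways: weakly increasing subsequences correspond to chains with $i$ and $j$ both increasing, i.e.\ to paths from $(1,1)$ to $(m,n)$, which is a different percolation functional; and a strictly decreasing subsequence uses each letter value at most once, so it cannot accumulate the multiplicities $a_{i,j}$ and its length is bounded by $m$. The fix is simply to replace your invariant by the longest weakly decreasing subsequence (the Greene invariant giving $\lambda_1$ for the column-insertion convention in force), after which the identification with $p(A)$ goes through as you describe.
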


\begin{example}
Resuming Example~\ref{exple_suite}, one checks that $p(A)=7$. A longest
decreasing word of the word $w(L_{A})=332114311432$ read off from $L_{A}$ is
given by $33211\,11$, which has length 7. This subword corresponds in the matrix $A$
to the path
\[
a_{1,3}=0\rightarrow a_{1,2}=2\rightarrow a_{1,1}=2\rightarrow a_{2,1}%
=1\rightarrow a_{3,1}=2\rightarrow a_{4,1}=0.
\]

\end{example}

The \textsf{weight} of the matrix $A$ is the monomial in the set of variables
$X=\{x_{1},\ldots,x_{n}\}$ and $Y=\{y_{1},\ldots,y_{m}\}$ defined by
\[
(xy)^{A}=\prod_{1\leq i\leq m,1\leq j\leq n}(x_{i}y_{j})^{a_{i,j}}.
\]
On the one hand, using that $\frac{1}{1-x_{i}y_{j}}=\sum_{a_{i,j}=0}^{+\infty
}(x_{i}y_{j})^{a_{i,j}}$, we can write%

\[
\prod_{1\leq i\leq m,1\leq j\leq n}\frac{1}{1-x_{i}y_{j}}=\sum_{A\in
\mathcal{M}_{m,n}}(xy)^{A}.
\]
On the other hand, observing that, from RSK, we have $(xy)^{A}=x^{\mathrm{wt}%
(P(A))}y^{\mathrm{wt}(Q(A))}$, we obtain a Cauchy-like identity using the
bijection $\psi$ and (\ref{Schur}):%
\[
\prod_{1\leq i\leq m,1\leq j\leq n}\frac{1}{1-x_{i}y_{j}}=\sum_{A\in
\mathcal{M}_{m,n}}x^{\mathrm{wt}(P(A))}y^{\mathrm{wt}(Q(A))}=\sum_{\lambda
\in\mathcal{P}_{\min(m,n)}}s_{\lambda}(x_{1},\dots,x_{m})s_{\lambda}%
(y_{1},\dots,y_{n}).
\]

\begin{remark}
\label{Rq_doublecrystal}Recall the rule given in~\eqref{tens_crys} for the
action of $\tilde{e}_{i},\tilde{f}_{i}$ on a tensor product of crystals. The
action of any operator $\tilde{f}_{i},\tilde{e}_{i},i=1,\ldots,m-1$ on a
matrix $A$ can be computed from $P(A)$ but also from the product of row
tableaux $L_{A}$ appearing in Example \ref{Ex_RSK} just by concatenating their
reading words. In particular when $\tilde{f}_{i}$ (resp. $\tilde{e}_{i}$) acts
on the $j$-th component of $L_{A},$ the matrix $\tilde{f}_{i}(A)$ (resp.
$\tilde{e}_{i}(A)$) is obtained from $A$ just by changing $a_{i,j}$ into
$a_{i,j}-1$ and $a_{i+1,j}$ into $a_{i+1,j}+1$ (resp. $a_{i,j}$ into
$a_{i,j}+1$ and $a_{i+1,j}$ into $a_{i+1,j}-1$). Similarly, when $\hat{f}_{j}$
(resp. $\hat{e}_{j}$) acts on $A$, there is an integer $i\in\{1,\ldots,m\}$
such that $\hat{f}_{j}(A)$ (resp. $\hat{e}_{j}(A)$) is obtained from $A$ by
changing $a_{i,j}$ into $a_{i,j}-1$ and $a_{i,j+1}$ into $a_{i,j+1}+1$ (resp.
$a_{i,j}$ into $a_{i,j}+1$ and $a_{i,j+1}$ into $a_{i,j+1}-1$)$.$
\end{remark}

\subsection{Restriction of the RSK correspondence}

Let $D$ be any subset of $\{1,\ldots,m\}\times\{1,\ldots,n\}$ and write
$\mathcal{M}_{m,n}^{D}$ for the subset of $\mathcal{M}_{m,n}$ containing the
matrices $A$ such that $a_{i,j}\neq0$ only if $(i,j)\in D$. In general, the
set $\psi(\mathcal{M}_{m,n}^{D})$ is not stable by the $\mathfrak{gl}%
_{m}\times\mathfrak{gl}_{n}$-crystals operators. Nevertheless, when $D$
corresponds to the Young diagram of a fixed partition $\Lambda$, it follows from Remark \ref{Rq_doublecrystal} that $D=D_{\Lambda}$ is
stable under the action of the operators $\tilde{f}_{i},i=1,\ldots,m-1$ and
$\widehat{e}_{j},j=1,\ldots,n-1$. The case where $m=n$ and $\varrho
=(n,n-1,\ldots,1)$ is particularly interesting. In matrix coordinates, we
indeed get that%
\[
D_{\varrho}=\{(i,j)\mid1\leq j\leq i\leq n\}.
\]
The following theorem, initially established in~{\cite{Las2}} using the
combinatorics of tableaux, has been reproved in \cite{KN} using Littelmann
paths and in \cite{AE} using semi-skyline diagrams combinatorics. In these
different versions, the convention for the crystals {is} not the same and we
here follow the one from~\cite{KN} which {is} compatible with Kashiwara and
Littelmann convention for the tensor products of crystals, which is the most
usual one. {Later Fu and Lascoux \cite{FL} reproved this theorem using
properties of divided differences.}

%\begin{thomas}
%Il me semble qu'il manque un $\sigma_{0}$ dans
%l'\~{A}\copyright nonc\~{A}\copyright \ du th\~{A}\copyright or\~{A}%
%%TCIMACRO{\U{a8}}%
%%BeginExpansion
%\"{}%
%%EndExpansion
%me ci-dessous. Il me semble que le $\sigma$ en exposant devrait \~{A}%
%%TCIMACRO{\U{aa}}%
%%BeginExpansion
%${{}^a}$%
%%EndExpansion
%tre un $\sigma_{0}\sigma$.
%\end{thomas}
%\begin{olga}
%It should be correct because $\iota$ reverses the weight. In particular we
%recover original Lascoux's identity.
%\end{olga}

\begin{theorem}
\label{Th_LKO}The restriction of the RSK correspondence $\psi$ to
$\mathcal{M}_{n,n}^{D_{\varrho}}$ gives a one-to-one correspondence%
\[
\psi:\mathcal{M}_{n,n}^{D_{\varrho}}\rightarrow%
%TCIMACRO{\tbigsqcup \limits_{\lambda\in\mathcal{P}_{n}}}%
%BeginExpansion
{\textstyle\bigsqcup\limits_{\lambda\in\mathcal{P}_{n}}}
%EndExpansion%
%TCIMACRO{\tbigsqcup \limits_{\sigma\in\mathfrak{S}_{n}^{\lambda}}}%
%BeginExpansion
{\textstyle\bigsqcup\limits_{\sigma\in\mathfrak{S}_{n}^{\lambda}}}
%EndExpansion
\overline{\mathrm{B}}^{\sigma}(\lambda)\times\mathrm{B}_{\sigma}(\lambda).
\]
Then by considering the weights of the elements in both sides, we get the
Cauchy-like identity%
\begin{equation}
\prod_{1\leq j\leq i\leq n}\frac{1}{1-x_{i}y_{j}}=\sum_{\lambda\in
\mathcal{P}_{n}}\sum_{\sigma\in\mathfrak{S}_{n}^{\lambda}}\overline
{\mathrm{\kappa}}_{\lambda}^{\sigma}(x)\mathrm{\kappa}_{\sigma,\lambda}(y).
\label{StaircaseKernel}%
\end{equation}

\end{theorem}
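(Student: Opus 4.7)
My plan is to exploit the bicrystal structure on $\mathcal{M}_{n,n}$ to extract the desired restriction of $\psi$. Since $\psi$ is already a bijection $\mathcal{M}_{n,n} \xrightarrow{\sim} \bigsqcup_\lambda B(\lambda)\times B(\lambda)$ by Theorem~\ref{Th_RSK} and $B(\lambda) = \bigsqcup_{\sigma \in \mathfrak{S}_n^\lambda} \overline{\mathrm{B}}^\sigma(\lambda)$, establishing the theorem reduces to proving: a matrix $A$ with $\psi(A) = (P,Q) \in B(\lambda)\times B(\lambda)$ lies in $\mathcal{M}_{n,n}^{D_\varrho}$ if and only if $K_+(Q) \leq K^-(P)$ in the Bruhat order on $\mathfrak{S}_n^\lambda$; equivalently, $Q \in \mathrm{B}_\sigma(\lambda)$ where $\sigma \in \mathfrak{S}_n^\lambda$ is the unique element with $K^-(P) = b_{\sigma\lambda}$. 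The Cauchy-like identity~\eqref{StaircaseKernel} then follows by computing weights on both sides using Theorem~\ref{Th_KL}~(1) together with the opposite-Demazure-character formula from Remark~\ref{Rq_Involution}~(7).

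A first step towards the characterization is a stability lemma: the bicrystal operators $\tilde{f}_i$ (which act on $P$) and $\hat{e}_j$ (which act on $Q$) preserve $\mathcal{M}_{n,n}^{D_\varrho}$. This is checked directly using Remark~\ref{Rq_doublecrystal}: the operator $\tilde{f}_i$ transfers a unit from a nonzero position $(i,j)$ with $j \leq i$ to $(i+1,j)$, still with $j \leq i+1$, while $\hat{e}_j$ transfers from $(i,j+1)$ with $i \geq j+1$ to $(i,j)$, still with $i > j$; in both cases the new position remains below the diagonal. By contrast $\tilde{e}_i$ and $\hat{f}_j$ may escape $\mathcal{M}_{n,n}^{D_\varrho}$, but only when acting on a boundary position $(i,i+1)$ or $(j,j+1)$.

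The core combinatorial argument is then to identify the sources of the restricted bicrystal (matrices $A \in \mathcal{M}_{n,n}^{D_\varrho}$ for which each $\tilde{e}_i(A)$ and $\hat{f}_j(A)$ is either zero or escapes $\mathcal{M}_{n,n}^{D_\varrho}$) and to trace out their orbits under the stable operators. I claim the source matrices are precisely those $A_{\lambda,\sigma}$ with $\psi(A_{\lambda,\sigma}) = (b_{\sigma\lambda}, b_\lambda)$ for some $\lambda \in \mathcal{P}_n$ and $\sigma \in \mathfrak{S}_n^\lambda$, i.e.\ pairs consisting of a key and the Yamanouchi tableau of the same shape $\lambda$. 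Starting from such a source, combining the orbit structure on the $P$-side within $\overline{\mathrm{B}}^\sigma(\lambda)$ (using the description of opposite Demazure atoms via the Lusztig--Sch\"utzenberger involution $\iota$ in Remark~\ref{Rq_Involution}~(6), which converts statements about the $\tilde{f}$-closure inside $\overline{\mathrm{B}}_{\sigma_0\sigma}(\lambda)$ into the desired $\tilde{e}$/$\tilde{f}$ description of $\overline{\mathrm{B}}^\sigma(\lambda)$) with the $\hat{e}_j$-accessible part of $\mathrm{B}_\sigma(\lambda)$ on the $Q$-side (using Theorem~\ref{Th_KL}~(2)--(3)) yields exactly the product $\overline{\mathrm{B}}^\sigma(\lambda) \times \mathrm{B}_\sigma(\lambda)$, and these products are disjoint as $(\lambda,\sigma)$ varies.

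The main obstacle is the precise matching in this last step: one must verify that the orbits under the stable operators exactly cover the union $\bigsqcup_{\lambda,\sigma}\overline{\mathrm{B}}^\sigma(\lambda)\times \mathrm{B}_\sigma(\lambda)$, with no omission and no overlap between orbits attached to distinct sources. This is where Theorem~\ref{Th_KL}~(3) (controlling how Demazure crystals meet $i$-chains) is crucial: it ensures that $\hat{e}_j$ does not exit $\mathrm{B}_\sigma(\lambda)$ from within, while the escaping operator $\hat{f}_j$ leaves $\mathcal{M}_{n,n}^{D_\varrho}$ exactly when it leaves $\mathrm{B}_\sigma(\lambda)$; a symmetric statement (transported through $\iota$) handles the $P$-side. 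Carrying out this verification constitutes the technical heart of the proof.
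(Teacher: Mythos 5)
First, a point of reference: the paper does not prove Theorem~\ref{Th_LKO} at all. It states it as a known result, citing \cite{Las2} for the original tableau-combinatorial proof and \cite{AE}, \cite{FL} (and the crystal-theoretic treatment of \cite{CK}) for later ones, and only fixes conventions. So your attempt cannot be checked against an internal argument of the paper; it has to stand on its own. Your reduction of the statement to the equivalence $A\in\mathcal{M}_{n,n}^{D_{\varrho}}\Leftrightarrow K_{+}(Q(A))\leq K^{-}(P(A))$ is the right target (the paper itself records this reformulation in the remark following Theorem~\ref{Th_truncatedRectangle}), and your stability lemma is correct: it is exactly the observation made in the paper, via Remark~\ref{Rq_doublecrystal}, that $\mathcal{M}_{n,n}^{D_{\Lambda}}$ is stable under $\tilde{f}_{i}$ and $\hat{e}_{j}$.

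However, the two claims that actually carry your argument are left unestablished, and one of them is false as stated. (i) The identification of the sources with the pairs $(b_{\sigma\lambda},b_{\lambda})$ is asserted without proof. (ii) More seriously, the orbit of a source under the stable operators is \emph{not} the product $\overline{\mathrm{B}}^{\sigma}(\lambda)\times\mathrm{B}_{\sigma}(\lambda)$, because opposite Demazure atoms are not closed under the lowering operators $\tilde{f}_{i}$: transporting Lemma~\ref{Lemma_Delta_i_atoms} through $\iota$ and \eqref{Demaz_Updow2} shows that $K^{-}(\tilde{f}_{i}(b))$ can jump from $b_{\sigma\lambda}$ to $b_{s_{i}\sigma\lambda}$ with $s_{i}\sigma>\sigma$. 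Already for $n=2$ and $\lambda=(1,0)$, the source is the matrix with a single entry $1$ at position $(1,1)$, with $P$ and $Q$ both equal to the one-box tableau of weight $(1,0)$; applying $\tilde{f}_{1}$ gives the (still lower-triangular) matrix with its entry at $(2,1)$, whose $P$-tableau is the key of weight $(0,1)$, hence lies in $\overline{\mathrm{B}}^{s_{1}}(\lambda)$ and not in $\overline{\mathrm{B}}^{id}(\lambda)$. So orbits of sources spill across several of the product sets, and the asserted matching "one source $\leftrightarrow$ one product" breaks down. The theorem survives because $\mathrm{B}_{\sigma}(\lambda)\subset\mathrm{B}_{s_{i}\sigma}(\lambda)$, so the pair always lands in \emph{some} product of the disjoint union --- but proving that the pair lands in the correct product is precisely proving that the inequality $K_{+}(Q)\leq K^{-}(P)$ is preserved at every step, i.e.\ the theorem itself. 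The "technical heart" you defer is therefore not a verification of an already-correct combinatorial picture; the induction has to be reorganized around the behaviour of the pair $(K^{-}(P),K_{+}(Q))$ under each stable and each escaping operator (or one should follow one of the cited proofs, e.g.\ \cite{CK}), before any verification can begin.
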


\begin{remark}
\label{Rq_rewriteLKO}Observe that, using Remark \ref{Rq_Involution}, {namely
\eqref{Demaz_Updow2} and (\ref{B_w(lambda)})}, we have%
\begin{equation}%
%TCIMACRO{\tbigsqcup \limits_{\lambda\in\mathcal{P}_{n}}}%
%BeginExpansion
{\textstyle\bigsqcup\limits_{\lambda\in\mathcal{P}_{n}}}
%EndExpansion%
%TCIMACRO{\tbigsqcup \limits_{\sigma\in\mathfrak{S}_{n}^{\lambda}}}%
%BeginExpansion
{\textstyle\bigsqcup\limits_{\sigma\in\mathfrak{S}_{n}^{\lambda}}}
%EndExpansion
\overline{\mathrm{B}}^{\sigma}(\lambda)\times\mathrm{B}_{\sigma}(\lambda)=%
%TCIMACRO{\tbigsqcup \limits_{\lambda\in\mathcal{P}_{n}}}%
%BeginExpansion
{\textstyle\bigsqcup\limits_{\lambda\in\mathcal{P}_{n}}}
%EndExpansion%
%TCIMACRO{\tbigsqcup \limits_{\sigma\in\mathfrak{S}_{n}^{\lambda}}}%
%BeginExpansion
{\textstyle\bigsqcup\limits_{\sigma\in\mathfrak{S}_{n}^{\lambda}}}
%EndExpansion
\mathrm{\iota}\left(  \overline{\mathrm{B}}_{\sigma_{0}\sigma}(\lambda
)\right)  \times\mathrm{B}_{\sigma}(\lambda)=%
%TCIMACRO{\tbigsqcup \limits_{\mu=(\mu_{1},\ldots,\mu_{n})\in\mathbb{Z}^{n}}}%
%BeginExpansion
{\textstyle\bigsqcup\limits_{\mu=(\mu_{1},\ldots,\mu_{n})\in\mathbb{Z}^{n}}}
%EndExpansion
\mathrm{\iota}\left(  \overline{\mathrm{B}}_{\sigma_{0}\mu}\right)
\times\mathrm{B}_{\mu} \label{OtherDec}%
\end{equation}
where $\{\lambda\}=\mathfrak{S}_{n}\mu\cap\mathcal{P}_{n}$ in each product set
of the disjoint union. This gives%
\[
\prod_{1\leq j\leq i\leq n}\frac{1}{1-x_{i}y_{j}}=\sum_{\mu\in\mathbb{Z}%
_{\geq0}^{n}}\overline{\mathrm{\kappa}}^{\mu}(x)\mathrm{\kappa}_{\mu}%
(y)=\sum_{\mu\in\mathbb{Z}_{\geq0}^{n}}\overline{\mathrm{\kappa}}_{\sigma
_{0}\mu}(x_{n},\ldots,x_{1})\mathrm{\kappa}_{\mu}(y_{1},\ldots,y_{n}).
\]
{Note that in \cite{Las2} the rows of the Young diagram $\varrho$ are counted
from bottom to top, from $1$ to $n$, whereas here they are counted from $n$ to
$1$ according to the matrix notation. Replacing $x_{i}$ with $x_{n-i+1}$ in
(\ref{StaircaseKernel}), one recovers Lascoux's non-symmetric Cauchy identity
from~\cite{Las2}
\[
\prod_{i+j\leq n+1}\frac{1}{1-x_{i}y_{j}}=\sum_{\mu\in\mathbb{Z}_{\geq0}^{n}%
}\overline{\mathrm{\kappa}}^{\mu}(x_{n},\dots,x_{1})\mathrm{\kappa}_{\mu
}(y)=\sum_{\mu\in\mathbb{Z}_{\geq0}^{n}}\overline{\mathrm{\kappa}}_{\sigma
_{0}\mu}(x_{1},\ldots,x_{n})\mathrm{\kappa}_{\mu}(y_{1},\ldots,y_{n}).
\]
}
\end{remark}

\section{Operations on Demazure crystals and refined RSK}

\subsection{Parabolic restriction in Demazure crystals and truncated
staircases}

\label{SubsecPR}Let $p,n$ be two integers with $1\leq p \leq n$. The
subset $I_{p}=\{1,\ldots,p-1\}\subseteq\{1,\ldots,n-1\}$ with $I:=I_n$,
{defines} a Levi subalgebra $\mathfrak{g}_{I_{p}}$ of $\mathfrak{gl}_{n}$
isomorphic to $\mathfrak{gl}_{p}$ obtained by considering the matrices with
zero entries in positions $(i,j)$ with $i>p$ or $j>p$. {We set $\mathfrak{g}%
_{I_{n}}:=\mathfrak{gl}_{n}$. }The algebra $\mathfrak{g}_{I_{p}}$ has Weyl
group $\mathfrak{S}_{p}=\langle s_{i}\mid i\in I_{p}\rangle$ and root system
$R_{I_{p}}=R\cap\mathrm{span}\langle\alpha_{i}\mid i\in I_{p}\rangle$, where
$R$ denotes the root system of the Weyl group $\mathfrak{S}_{n}$ of
$\mathfrak{gl}_{n}$.\ Its cone of dominant weights can be identified with
$\mathcal{P}_{p}$. Given $\lambda\in \mathcal{P}_{p}=\bigoplus_{i=1}%
^{p}\mathbb{Z}\mathbf{e}_{i}$, let us denote by $B_{p}(\lambda)$ the
subcrystal of the $\mathfrak{gl}_{n}$-crystal $B_{n}(\lambda):=B(\lambda
)=B(\lambda,0^{n-p})$ obtained by keeping only the vertices connected to its
highest weight vertex $b_{\lambda}$ by $i$-arrows with $i\in I_{p}$. It
follows from the general theory of crystals that $B_{p}(\lambda)$ is a
realization of the $\mathfrak{gl}_{p}$-crystal associated to $\lambda$. In
terms of characters, this corresponds to the specialization $x_{p+1}%
=\cdots=x_{n}=0$ in the character $s_{\lambda}(x)${ of $B(\lambda)$}.\ For the
tableaux realization of crystals, we recover with $B_{p}(\lambda)$ the crystal
realization of $\mathfrak{gl}_{p}$-crystals by tableaux{ of shape $\lambda$
with entries in the alphabet $[p]$ as a subcrystal of the crystal
$B(\lambda)$ of tableaux of shape $\lambda$ in the alphabet $[n]$}. Given
$u\in\mathfrak{S}_{p}$, we will denote by $\mathrm{B}_{p,u}(\lambda
),\mathrm{B}_{p}^{u}(\lambda),\overline{\mathrm{B}}_{p,u}(\lambda)$ and
$\overline{\mathrm{B}}_{p}^{u}(\lambda)$ the Demazure, opposite Demazure and
atoms associated to $u$ in the $\mathfrak{gl}_{p}$-crystal $B_{p}(\lambda)$.

The \textsf{Coxeter monoid} associated to the symmetric group $\mathfrak{S}_{n}$ is the
monoid $\mathfrak{M}_{n}$ with generators $\boldsymbol{s}_{i},i=1,\ldots,n-1$
and relations
\begin{align*}
\boldsymbol{s}_{i}\boldsymbol{s}_{j}  &  =\boldsymbol{s}_{j}\boldsymbol{s}%
_{i}\text{ for any }i,j=1,\ldots,n-1\text{ such that }\left\vert
i-j\right\vert >1\\
\boldsymbol{s}_{i}\boldsymbol{s}_{i+1}\boldsymbol{s}_{i}  &  =\boldsymbol{s}%
_{i+1}\boldsymbol{s}_{i}\boldsymbol{s}_{i+1}\text{ for any }i=1,\ldots
,n-2\text{,}\\
\boldsymbol{s}_{i}^{2}  &  =\boldsymbol{s}_{i}\text{ for any }i=1,\ldots
,n-1\text{.}%
\end{align*}
Observe that this is exactly the same relations as those satisfied by the
Demazure operators and the map $\boldsymbol{s}_{i}\longmapsto D_{i}$ yields a
faithful representation of the monoid $\mathfrak{M}_{n}$ on $\mathbb{Z}%
[x_{1},\ldots,x_{n}]$. There is a canonical bijection between $\mathfrak{S}%
_{n}$ and $\mathfrak{M}_{n}$ sending any reduced decomposition of $\sigma
\in\mathfrak{S}_{n}$ to the same (still reduced) decomposition
$\boldsymbol{\sigma}\in\mathfrak{M}_{n}$. Given any $\sigma\in\mathfrak{S}%
_{n}$ and a reduced decomposition $\sigma=s_{i_{1}}\cdots s_{i_{\ell}}$, we
write $\sigma^{I_{p}}$ for the element of $\mathfrak{S}_{p}$ obtained by the
following procedure (see also ~\cite[Section 2]{BFL}).

\begin{algorithm}
\label{algo_cox_mon}
\
\begin{enumerate}
\item Remove all the $\boldsymbol{s}_{i_{a}}$ in $\boldsymbol{\sigma}$ such
that $i_{a}\notin I_{p}$. This yields a word in the generators of $\mathfrak{M}_n$, which may not be reduced.
\item Calculate the element of $\mathfrak{M}_n$ represented by the word obtained in~$(1)$, and denote it  $\boldsymbol{\sigma}^{I_{p}}\in\mathfrak{M}_{n}$.

\item The element $\sigma^{I_{p}}$\footnote{Note that $\sigma^{I_{p}%
}$ is \textit{not} the minimal length element in $\sigma\mathfrak{S}_{p}$ in general.} is the element of $\mathfrak{S}_{p}$ associated to
$\boldsymbol{\sigma}^{I_{p}}$ through the canonical bijection $W=\mathfrak{S}_{n}\longrightarrow
\mathfrak{M}_{n}$.
\end{enumerate}
\end{algorithm}

%In general, if $\sigma$ contains $\sigma_0^{I_{p}}$ as a maximal sub-word in the alphabet $I_p$ then $\sigma^{I_{p}}=\sigma_0^{I_{p}}$.

\begin{lemma}
The element $\sigma^{I_{p}}$ obtained by Algorithm~\ref{algo_cox_mon} does not depend on the initial reduced
decomposition chosen for $\sigma$.
\end{lemma}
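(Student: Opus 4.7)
The plan is to invoke Matsumoto's lemma (already used in the paper): any two reduced decompositions of $\sigma \in \mathfrak{S}_n$ differ by a sequence of braid moves, namely commutations $s_i s_j = s_j s_i$ with $|i-j|>1$ and long braid moves $s_i s_{i+1} s_i = s_{i+1} s_i s_{i+1}$, with no $s_i^2 = 1$ simplification needed since the intermediate words are still reduced. Thus, it suffices to check that each such individual braid move, when applied before the deletion step of Algorithm~\ref{algo_cox_mon}, produces the same element of $\mathfrak{M}_n$ after deletion. Once this is verified, the image $\boldsymbol{\sigma}^{I_p} \in \mathfrak{M}_n$ is well-defined, and its preimage $\sigma^{I_p} \in \mathfrak{S}_p$ under the canonical bijection $\mathfrak{S}_n \leftrightarrow \mathfrak{M}_n$ depends only on $\sigma$.

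For a commutation move swapping $s_i s_j$ with $|i-j|>1$: if both $i,j \in I_p$, then after deletion we compare $\boldsymbol{s}_i \boldsymbol{s}_j$ with $\boldsymbol{s}_j \boldsymbol{s}_i$, which are equal in $\mathfrak{M}_n$ by the commutation relation among the $\boldsymbol{s}_k$. If both $i,j \notin I_p$, both letters are removed. If exactly one lies in $I_p$, only that letter survives in both orderings and we obtain the same word. In all cases, commutation moves descend to equalities in $\mathfrak{M}_n$ after deletion.

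For the long braid move $s_i s_{i+1} s_i \longleftrightarrow s_{i+1} s_i s_{i+1}$: the delicate case is when exactly one of $i, i+1$ lies in $I_p$. Since $I_p = \{1,\dots,p-1\}$ is downward-closed in $\{1,\dots,n-1\}$, having $i+1 \in I_p$ would force $i \in I_p$; so this case can only be $i = p-1 \in I_p$ and $i+1 = p \notin I_p$. After deletion, the left side $s_{p-1} s_p s_{p-1}$ becomes $\boldsymbol{s}_{p-1}\boldsymbol{s}_{p-1} = \boldsymbol{s}_{p-1}$ using the idempotency relation $\boldsymbol{s}_k^2 = \boldsymbol{s}_k$, while the right side $s_p s_{p-1} s_p$ becomes simply $\boldsymbol{s}_{p-1}$. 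They agree. If both $i,i+1 \in I_p$, the long braid relation in $\mathfrak{M}_n$ itself gives the equality; if neither lies in $I_p$, both sides reduce to the empty word.

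The main obstacle is essentially psychological rather than technical: one has to notice that the idempotency relation $\boldsymbol{s}_k^2 = \boldsymbol{s}_k$ in the Coxeter monoid is precisely what is needed to treat the borderline case $\{i,i+1\} = \{p-1,p\}$, where a long braid relation in $\mathfrak{S}_n$ produces an unbalanced pair of $\boldsymbol{s}_{p-1}$'s on one side after deletion. This is the one place where passing from $\mathfrak{S}_n$ to $\mathfrak{M}_n$ is genuinely used; elsewhere, the relations of $\mathfrak{M}_n$ used are the shared braid relations, which lift verbatim from $\mathfrak{S}_n$.
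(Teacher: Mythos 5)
Your proof is correct and follows essentially the same route as the paper's: the paper defers this lemma to its appendix (Lemma~\ref{lem_indep_algo}), where it is proved for an arbitrary Coxeter group by exactly your strategy — Matsumoto's lemma reduces the claim to checking invariance under a single braid move, the only nontrivial case being a braid relation with exactly one letter in $I$, which is handled by the idempotency $\boldsymbol{s}^2=\boldsymbol{s}$ in the Coxeter monoid. The only difference is that the paper's version treats general $m_{s,t}$ (so the mixed case contributes $k$ versus $k$ or $k-1$ copies of $\boldsymbol{s}$ depending on parity), whereas you specialize to the symmetric group where only commutations and length-$3$ braids occur, which is all the stated lemma requires.
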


\begin{proof}
See the appendix (Lemma~\ref{lem_indep_algo}) for a general proof in arbitrary Coxeter groups.
\end{proof}

We give an example of this algorithm in Example~\ref{ex_proc_cox_mon} below.

For $\sigma\neq1$, $\sigma^{I_{p}}=1$ if and only if $\sigma\in\mathfrak{S}%
_{[p,n]}$. Note that $\sigma_{0}^{I_{p}}$ is the longest element of
$\mathfrak{S}_{p}$; indeed, writing $\sigma_{0}^{[p]}$ for the longest element
of $\mathfrak{S}_{p}$ viewed inside $\mathfrak{S}_{n}$, we have $\ell
(\sigma_{0})=\ell(\sigma_{0}^{[p]})+ \ell(\sigma_{0}^{[p]} \sigma_{0})$, since
$\sigma_{0}$ has every element of $\mathfrak{S}_{n}$ appearing as a prefix.
Chosing a reduced decomposition of $\sigma_{0}$ beginning by a reduced
decomposition of $\sigma_{0}^{[p]}$ and applying Algorithm~\ref{algo_cox_mon} to
this decomposition yields an element in $\mathfrak{M}_{n}$ which is of the form
$\boldsymbol{\sigma_{0}^{[p]} x}$ for some $x\in\mathfrak{S}_{p}$. Using the
fact that $\ell(\boldsymbol{wv})\geq\ell(\boldsymbol{w})$ for all
$\boldsymbol{w}, \boldsymbol{v}\in\mathfrak{M}_{n}$, we see that we must have
$\boldsymbol{\sigma_{0}^{[p]} x}=\boldsymbol{\sigma_{0}^{[p]}}$.

\begin{example}
\label{ex_proc_cox_mon} Consider the reduced decomposition $\sigma=s_{1}%
s_{2}s_{3}s_{1}s_{2}$ in $\mathfrak{S}_{4}$ and choose $p=3$, hence
$I_{3}=\{1,2\}$.$\ $We then get in $\mathfrak{M}_{4}$%
\[
\boldsymbol{\sigma}^{I_{p}}=\boldsymbol{s}_{1}\boldsymbol{s}_{2}\widehat{\boldsymbol{s}_3}%
\boldsymbol{s}_{1}\boldsymbol{s}_{2}=\boldsymbol{s}_{1}\boldsymbol{s}_{2}%
\boldsymbol{s}_{1}\boldsymbol{s}_{2}=\boldsymbol{s}_{2}\boldsymbol{s}%
_{1}\boldsymbol{s}_{2}\boldsymbol{s}_{2}=\boldsymbol{s}_{2}\boldsymbol{s}%
_{1}\boldsymbol{s}_{2}=\boldsymbol{s}_{1}\boldsymbol{s}_{2}\boldsymbol{s}%
_{1}.
\]
Therefore, $\sigma^{I_{3}}=s_{2}s_{1}s_{2}=s_{1}s_{2}s_{1}\in\mathfrak{S}_{3}$.
\end{example}

\begin{proposition}
\label{PropT}Consider $\sigma$ in $\mathfrak{S}_{n}.$ The set $\mathfrak{S}%
_{{p}}^{\leq\sigma}=\{v\in\mathfrak{S}_{p}\mid v\leq\sigma\}$ admits
$\sigma^{I_{p}}$ has unique maximal element for $\leq$, that is%
\[
\mathfrak{S}_{{p}}^{\leq\sigma}=\{v\in\mathfrak{S}_{p}\mid v\leq\sigma^{I_{p}%
}\}.
\]

\end{proposition}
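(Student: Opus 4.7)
The plan is to prove that $\sigma^{I_p}$ is the maximum (not merely a maximal element) of $\mathfrak{S}_p^{\leq\sigma}$, which under the partial order $\leq$ forces uniqueness as a maximal element and yields the announced set identity $\mathfrak{S}_p^{\leq\sigma}=\{v\in\mathfrak{S}_p\mid v\leq\sigma^{I_p}\}$ (the reverse containment being automatic by transitivity and the first step below). Concretely, I will establish the two claims: (a) $\sigma^{I_p}\in\mathfrak{S}_p$ and $\sigma^{I_p}\leq\sigma$, and (b) every $v\in\mathfrak{S}_p$ with $v\leq\sigma$ satisfies $v\leq\sigma^{I_p}$.

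The key tool for both steps is the well-known compatibility between the strong Bruhat order on $\mathfrak{S}_n$ and the natural subword order on the Coxeter monoid $\mathfrak{M}_n$ (recalled in Appendix~\ref{Appe}): whenever $W=\boldsymbol{s}_{i_1}\cdots\boldsymbol{s}_{i_k}$ is any (not necessarily reduced) word in the generators of $\mathfrak{M}_n$ representing an element $\boldsymbol{w}$, and $W'=\boldsymbol{s}_{i_{a_1}}\cdots\boldsymbol{s}_{i_{a_m}}$ with $a_1<\cdots<a_m$ is a subword of $W$ representing an element $\boldsymbol{v}\in\mathfrak{M}_n$, the corresponding elements of $\mathfrak{S}_n$ satisfy $v\leq w$.

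For (a), fix a reduced decomposition $s_{i_1}\cdots s_{i_\ell}$ of $\sigma$, so $\boldsymbol{s}_{i_1}\cdots\boldsymbol{s}_{i_\ell}$ is a reduced expression for $\boldsymbol{\sigma}\in\mathfrak{M}_n$. By Algorithm~\ref{algo_cox_mon}, the element $\boldsymbol{\sigma}^{I_p}$ is represented by the subword obtained by erasing all $\boldsymbol{s}_{i_a}$ with $i_a\notin I_p$. The subword principle then yields $\sigma^{I_p}\leq\sigma$; that $\sigma^{I_p}\in\mathfrak{S}_p$ is clear by construction.

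For (b), let $v\in\mathfrak{S}_p$ with $v\leq\sigma$. By the subword characterization of the Bruhat order (\cite[Corollary~2.2.3]{BB}), some reduced decomposition $s_{i_1}\cdots s_{i_\ell}$ of $\sigma$ contains a subword $s_{i_{a_1}}\cdots s_{i_{a_k}}$ (with $a_1<\cdots<a_k$) that is a reduced decomposition of $v$. Since $v\in\mathfrak{S}_p=\langle s_i\mid i\in I_p\rangle$, all its reduced decompositions use only generators indexed by $I_p$; in particular $i_{a_1},\dots,i_{a_k}\in I_p$. Therefore $\boldsymbol{s}_{i_{a_1}}\cdots\boldsymbol{s}_{i_{a_k}}$ is a subword of the monoid word obtained by keeping only the $I_p$-letters of $\boldsymbol{s}_{i_1}\cdots\boldsymbol{s}_{i_\ell}$, namely a word representing $\boldsymbol{\sigma}^{I_p}$. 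A second application of the subword principle gives $v\leq\sigma^{I_p}$, as required. The main subtlety lies here: one must transport the Bruhat-order subword relation into the monoid setting through the \emph{non-reduced} word that represents $\boldsymbol{\sigma}^{I_p}$. The fact that every reduced expression for $v\in\mathfrak{S}_p$ already lives in the sub-alphabet indexed by $I_p$ is precisely what makes this transfer possible.
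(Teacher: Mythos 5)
Your proof is correct and follows essentially the same route as the paper's own argument (given in the Appendix for general Coxeter groups): both steps --- that $\sigma^{I_p}\leq\sigma$ via the $I_p$-subword of a reduced decomposition, and that any $v\in\mathfrak{S}_p$ with $v\leq\sigma$ has a reduced decomposition sitting inside that same $I_p$-subword, hence $v\leq\sigma^{I_p}$ --- are exactly the two halves of the paper's proof, with your ``subword principle'' being precisely the content of Lemma~\ref{lem2}~(2) combined with Lemma~\ref{lem1}. No gaps.
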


\begin{proof}
This is the same as~\cite[Theorem 2.2]{BFL} when the parabolic subgroup is $\mathfrak{S}_{p}$, and a particular case of~\cite[Lemma 3]{MM} (see also~\cite[Theorem 2.1]{RO}). For the convenience of the reader we also give a proof in the appendix (see Lemma~\ref{lem2}) in the framework of general Coxeter groups.
\end{proof}

Now, set $B^{p}(\lambda)=\mathrm{\iota}(B_{p}(\lambda))$ where $\mathrm{\iota
}$ is the involution in $B(\lambda)$ defined in Remark \ref{Rq_Involution}.{
Since
\[
B_{p}(\lambda)=\{\tilde{f}_{{i_{1}}}^{k_{1}}\cdots\tilde{f}_{i_{N}}^{k_{{N}}%
}(b_{\lambda})\mid{i_{1}},\ldots,{i_{N}}\in\lbrack p-1],N\geq1,(k_{i_{1}%
},\dots,k_{i_{N}})\in\mathbb{Z}_{\geq0}^{N}\}\setminus\{0\},
\]
we get%
\[
B^{p}(\lambda)=\{\tilde{e}_{{i_{1}}}^{k_{i_{1}}}\cdots\tilde{e}_{i_{N}%
}^{k_{i_{N}}}(b_{\sigma_{0}(\lambda,0^{n-p})})\mid{i_{1}},\ldots,{i_{N}}%
\in\lbrack n-p+1,n-1],N\geq1,(k_{i_{1}},\dots,k_{i_{N}})\in\mathbb{Z}_{\geq
0}^{N}\}\setminus\{0\}.
\]
One can
observe that $B^{p}(\lambda)$ also has the structure of a $\mathfrak{gl}_{p}%
$-crystal but this time for the root system with set of simple roots
$\{\alpha_{n-p+1},\ldots\alpha_{n-1}\}$.\ The corresponding character is
obtained from the specialization $x_{1}=\dots=x_{n-p}=0$ in the character
$s_{\lambda}(x)$ of $B(\lambda)$. }

\begin{corollary}
\label{Cor_Intersection}For any $\sigma\in\mathfrak{S}_{n}$, we have the
following equalities of sets%
\begin{align*}
1  &  :\mathrm{B}_{\sigma}(\lambda)\cap B_{p}(\lambda)=\mathrm{B}%
_{p,\sigma^{I_{p}}}(\lambda),\text{ \quad}\overline{\mathrm{B}}^{\sigma
}(\lambda)\cap B_{p}(\lambda)=\left\{
\begin{array}
[c]{l}%
\emptyset\text{ if }\sigma\notin\mathfrak{S}_{p},\\
\overline{\mathrm{B}}_{p}^{\sigma}(\lambda)\text{ otherwise}.%
\end{array}
\right. \\
2  &  :\mathrm{B}^{\sigma}(\lambda)\cap B_{p}(\lambda)=\left\{
\begin{array}
[c]{l}%
\emptyset\text{ if }\sigma\notin\mathfrak{S}_{p},\\
\mathrm{B}_{p}^{\sigma}(\lambda)\text{ otherwise},%
\end{array}
\right.  \text{ \quad}\overline{\mathrm{B}}_{\sigma}(\lambda)\cap
B_{p}(\lambda)=\left\{
\begin{array}
[c]{l}%
\emptyset\text{ if }\sigma\notin\mathfrak{S}_{p},\\
\overline{\mathrm{B}}_{p,\sigma}(\lambda)\text{ otherwise}.%
\end{array}
\right. \\
3  &  :\mathrm{B}_{\sigma}(\lambda)\cap B^{p}(\lambda)=\left\{
\begin{array}
[c]{l}%
\emptyset\text{ if }\sigma\notin\mathfrak{S}_{p},\\
\mathrm{\iota}(\mathrm{B}_{p,\sigma_{0}\sigma}(\lambda))\text{ otherwise},%
\end{array}
\right.  \text{ \quad}\overline{\mathrm{B}}^{\sigma}(\lambda)\cap
B^{p}(\lambda)=\left\{
\begin{array}
[c]{l}%
\emptyset\text{ if }\sigma\notin\sigma_{0}\mathfrak{S}_{p},\\
\mathrm{\iota}(\overline{\mathrm{B}}_{p,\sigma_{0}\sigma}(\lambda))\text{
otherwise}.%
\end{array}
\right. \\
4  &  :\mathrm{B}^{\sigma}(\lambda)\cap B^{p}(\lambda)=\mathrm{\iota
}(\mathrm{B}_{p,\sigma_{0}\sigma^{I_{p}}}(\lambda)),\text{ \quad}%
\overline{\mathrm{B}}_{\sigma}(\lambda)\cap B^{p}(\lambda)=\left\{
\begin{array}
[c]{l}%
\emptyset\text{ if }\sigma\notin\sigma_{0}\mathfrak{S}_{p},\\
\mathrm{\iota}(\overline{\mathrm{B}}_{p}^{\sigma_{0}\sigma}(\lambda))\text{
otherwise}.%
\end{array}
\right.
\end{align*}

\end{corollary}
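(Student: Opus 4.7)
My plan is to prove the four pairs of equalities in three stages: a key-based stage yielding Part~$2$ together with the second equality of Part~$1$, an inductive stage handling the first equality of Part~$1$, and a transport stage through the Lusztig--Sch\"utzenberger involution $\iota$ for Parts~$3$ and~$4$.

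The first stage rests on the observation that for every $b\in B_{p}(\lambda)$, the keys $K_{+}(b)$ and $K^{-}(b)$ computed in the ambient $\mathfrak{gl}_{n}$-crystal $B(\lambda)$ coincide with the keys computed inside the sub-$\mathfrak{gl}_{p}$-crystal $B_{p}(\lambda)$, and therefore lie in $\{b_{u\lambda}\mid u\in\mathfrak{S}_{p}^{\lambda}\}$. Indeed, $b$ can be reached from $b_{\lambda}$ by a path $\tilde{f}_{i_{1}}\cdots\tilde{f}_{i_{\ell}}$ whose indices lie entirely in $I_{p}$, so the dilatation embedding $\theta_{k}(b)=\tilde{f}_{i_{1}}^{k}\cdots\tilde{f}_{i_{\ell}}^{k}(b_{\lambda}^{\otimes k})$ lies in $B_{p}(\lambda)^{\otimes k}$ and agrees with the $\mathfrak{gl}_{p}$-dilatation. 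If $\sigma\notin\mathfrak{S}_{p}$, then $b_{\sigma\lambda}$ is not a vertex of this form, which immediately gives $\overline{\mathrm{B}}^{\sigma}(\lambda)\cap B_{p}(\lambda)=\emptyset$ and $\overline{\mathrm{B}}_{\sigma}(\lambda)\cap B_{p}(\lambda)=\emptyset$. For $\mathrm{B}^{\sigma}(\lambda)\cap B_{p}(\lambda)$ the same emptiness follows from the subword characterisation of the strong Bruhat order: any $u\in\mathfrak{S}_{p}$ with $u\geq\sigma$ forces $\sigma\in\mathfrak{S}_{p}$. When $\sigma\in\mathfrak{S}_{p}$, the three non-empty identities reduce to the standard fact that the restriction of the strong Bruhat order of $\mathfrak{S}_{n}$ to $\mathfrak{S}_{p}$ coincides with the strong Bruhat order of $\mathfrak{S}_{p}$.

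The second stage proves $\mathrm{B}_{\sigma}(\lambda)\cap B_{p}(\lambda)=\mathrm{B}_{p,\sigma^{I_{p}}}(\lambda)$ by induction on $\ell(\sigma)$. The base case $\sigma=\mathrm{id}$ is trivial. For the inductive step, write $\sigma=s_{i}\tau$ with $\ell(\sigma)=\ell(\tau)+1$, so that $\mathrm{B}_{\sigma}(\lambda)=\mathfrak{D}_{i}(\mathrm{B}_{\tau}(\lambda))$, where $\mathfrak{D}_{i}(X):=\{\tilde{f}_{i}^{k}(x)\mid x\in X,\,k\geq 0\}\setminus\{0\}$ is the Demazure operator on crystal subsets described by Theorem~\ref{Th_KL}. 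If $i\in I_{p}$, each $i$-chain of $B(\lambda)$ lies entirely in $B_{p}(\lambda)$ or entirely outside, so $\mathfrak{D}_{i}(X)\cap B_{p}(\lambda)=\mathfrak{D}_{i}(X\cap B_{p}(\lambda))$; combining the inductive hypothesis with Kashiwara's theorem applied inside $B_{p}(\lambda)$ and Remark~\ref{rmk_chains}, we get $\mathfrak{D}_{i}(\mathrm{B}_{p,\tau^{I_{p}}}(\lambda))=\mathrm{B}_{p,\sigma^{I_{p}}}(\lambda)$, the Coxeter-monoid identity $\boldsymbol{s}_{i}\boldsymbol{\tau^{I_{p}}}=\boldsymbol{\sigma^{I_{p}}}$ in $\mathfrak{M}_{p}$ covering uniformly the reduced and the non-reduced case. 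If $i\notin I_{p}$, a weight argument closes the case: any $y=\tilde{f}_{i}^{k}(x)\in B_{p}(\lambda)$ with $x\in\mathrm{B}_{\tau}(\lambda)$ satisfies $\mathrm{wt}(y)_{i+1}=\mathrm{wt}(x)_{i+1}+k$, and since $i+1>p$ forces $\mathrm{wt}(y)_{i+1}=0$, only $k=0$ is possible. Hence $\mathrm{B}_{\sigma}(\lambda)\cap B_{p}(\lambda)=\mathrm{B}_{\tau}(\lambda)\cap B_{p}(\lambda)=\mathrm{B}_{p,\tau^{I_{p}}}(\lambda)=\mathrm{B}_{p,\sigma^{I_{p}}}(\lambda)$, the last equality because Algorithm~\ref{algo_cox_mon} simply deletes the generator $\boldsymbol{s}_{i}$ when $i\notin I_{p}$.

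Finally, Parts~$3$ and~$4$ follow from Parts~$1$ and~$2$ by transport through $\iota$: since $\iota$ is a bijection of $B(\lambda)$ sending $B_{p}(\lambda)$ onto $B^{p}(\lambda)$ and satisfies the identities $\mathrm{B}^{\sigma}(\lambda)=\iota(\mathrm{B}_{\sigma_{0}\sigma}(\lambda))$ and $\overline{\mathrm{B}}^{\sigma}(\lambda)=\iota(\overline{\mathrm{B}}_{\sigma_{0}\sigma}(\lambda))$ from Remark~\ref{Rq_Involution}, each intersection in Parts~$3$--$4$ rewrites as $\iota$ of an intersection in Parts~$1$--$2$, after replacing $\sigma$ by $\sigma_{0}\sigma$ on the Demazure side. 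The main delicacy I anticipate is the Coxeter-monoid bookkeeping in the inductive step of Part~$1$: one must match the Demazure-crystal behaviour of $\mathfrak{D}_{i}$ acting on $\mathrm{B}_{p,\tau^{I_{p}}}(\lambda)$, which per Remark~\ref{rmk_chains} either enlarges the crystal by one ``chain layer'' or fixes it, with the Coxeter-monoid product rule $\boldsymbol{s}_{i}\boldsymbol{\tau^{I_{p}}}$ used in Algorithm~\ref{algo_cox_mon} to compute $\sigma^{I_{p}}$. Once this match is pinned down, the remaining arguments are essentially routine.
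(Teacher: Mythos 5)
Your proof is correct, and while your first and third stages (the key-based arguments for Part~2 and the second equality of Part~1, and the transport of Parts~3--4 through $\iota$) coincide with what the paper does, your second stage takes a genuinely different route for the main equality $\mathrm{B}_{\sigma}(\lambda)\cap B_{p}(\lambda)=\mathrm{B}_{p,\sigma^{I_{p}}}(\lambda)$. The paper proves this in one line: it writes $\mathrm{B}_{\sigma}(\lambda)\cap B_{p}(\lambda)=\{b\in B_{p}(\lambda)\mid K_{+}(b)\leq b_{\sigma\lambda}\}$, notes that $K_{+}(b)\in\mathfrak{S}_{p}$ for $b\in B_{p}(\lambda)$, and then invokes Proposition~\ref{PropT} (the parabolic-map statement $\{v\in\mathfrak{S}_{p}\mid v\leq\sigma\}=\{v\in\mathfrak{S}_{p}\mid v\leq\sigma^{I_{p}}\}$, proved Coxeter-theoretically in the appendix) to replace $b_{\sigma\lambda}$ by $b_{\sigma^{I_{p}}\lambda}$. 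You instead induct on $\ell(\sigma)$ using the generation of Demazure crystals by strings of $\tilde{f}_{i}$'s (Theorem~\ref{Th_KL}~(2)), the stability of $i$-chains under $B_{p}(\lambda)$ for $i\in I_{p}$, a weight-positivity argument killing the case $i\notin I_{p}$, and the multiplicativity $\boldsymbol{\sigma}^{I_p}=\boldsymbol{s}_i\boldsymbol{\tau}^{I_p}$ in the Coxeter monoid (Remark~\ref{rem_prod_cox_mon}); in effect you re-derive the crystal-level consequence of Proposition~\ref{PropT} without ever invoking it. The paper's argument is shorter and isolates the combinatorics in a reusable Coxeter-group lemma; yours is longer but self-contained on the crystal side and makes visible exactly how Algorithm~\ref{algo_cox_mon} tracks the operators $\Delta_i$, which is the mechanism reused later in \S~\ref{Subsec_ASC}. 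Both are valid; your reduction $\mathfrak{D}_{i}(X)\cap B_{p}(\lambda)=\mathfrak{D}_{i}(X\cap B_{p}(\lambda))$ for $i\in I_p$ and the observation that $\mathrm{wt}(y)_{i+1}=0$ forces $k=0$ when $i\geq p$ are the two points that make the induction close, and both check out.
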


%\begin{thomas}
%Question qu'on a \~{A}\copyright voqu\~{A}\copyright e (mais ce n'est pas
%clair qu'elle est int\~{A}\copyright ressante a priori): que se passe-t-il si
%on remplace $I$ par un parabolique standard connexe qui ne commence pas
%forc\~{A}\copyright ment \~{A}~ $s_{1}$ ? Est-ce que $B_{\sigma}(\lambda)$
%intersect\~{A}\copyright \ avec le (cristal correspondant au) module obtenu en
%prenant la bonne composante irr\~{A}\copyright ductible de $V(\lambda)$
%resteint \~{A}~ $\mathfrak{gl}_{I}$ a encore une t\~{A}%
%%TCIMACRO{\U{aa}}%
%%BeginExpansion
%${{}^a}$%
%%EndExpansion
%te sympathique ?
%\end{thomas}

\begin{proof}
For the equalities in the first point, we have
\[
\mathrm{B}_{\sigma}(\lambda)\cap B_{p}(\lambda)=\{b\in B_{p}(\lambda)\mid
K_{+}(b)\leq b_{\sigma\lambda}\}=\{b\in B_{p}(\lambda)\mid K_{+}(b)\leq
b_{\sigma^{I_{p}}\lambda}\}=\mathrm{B}_{p,\sigma^{I_{p}}}(\lambda)
\]
where the second equality follows from Proposition \ref{PropT} since
$K_{+}(b)$ belongs to $\mathfrak{S}_{p}$ for any $b\in B_{p}(\lambda)$.

For the second equality of the first point, we also obtain%
\[
\overline{\mathrm{B}}^{\sigma}(\lambda)\cap B_{p}(\lambda)=\{b\in
B_{p}(\lambda)\mid K^{-}(b)=\sigma\}.
\]
%\begin{thomas} I think that the last inequality should be an equality \end{thomas}
But $K^{-}(b)$ belongs to $\mathfrak{S}_{p}$ for any $b\in B_{p}(\lambda)$.
Therefore, by definition of the strong Bruhat order, $\sigma\leq K^{-}(b)$ is
only possible when $\sigma\in\mathfrak{S}_{p}$, whence the result. Similarly,
for the set equalities of the second point we have%
\[
\mathrm{B}^{\sigma}(\lambda)\cap B_{p}(\lambda)=\{b\in B_{p}(\lambda)\mid
K^{-}(b)\geq\sigma\}.
\]
We have that $K^{-}(b)$ belongs to $\mathfrak{S}_{p}$
and thus if $\sigma \leq K^{-}(b)$, then $\sigma$ also belongs to $\mathfrak{S}_{p}$ by definition of the Bruhat order. Therefore%
\[
\mathrm{B}^{\sigma}(\lambda)\cap B_{p}(\lambda)=\left\{
\begin{array}
[c]{l}%
\emptyset\text{ if }\sigma\notin\mathfrak{S}_{p}\\
\mathrm{B}_{p}^{\sigma}(\lambda)\text{ otherwise}%
\end{array}
\right.
\]
as claimed.\ Finally we can write%
\[
\overline{\mathrm{B}}_{\sigma}(\lambda)\cap B_{p}(\lambda)=\{b\in
B_{p}(\lambda)\mid K_{+}(b)=\sigma\}
\]
and we get the result by using that $K_{+}(b)\in\mathfrak{S}_{p}$ for any $b$
in $B_{p}(\lambda)$. The third and fourth set equalities are easily deduced
from the two previous ones by applying the involution $\mathrm{\iota}$ and
using the relation (\ref{Demaz_Updow1}).
\end{proof}

\begin{remark}In the setting of Proposition \ref{PropT}, writing $$\mathfrak{S}%
_{{p}}^{\leq\sigma}=\{v\in\mathfrak{S}_{p}\mid v\leq\sigma\}=\{v\in \mathfrak{S}_n: v\le \sigma_0^{[p]}, v\leq\sigma\}=\{v\in \mathfrak{S}_n\mid v\leq\sigma^{I_{p}%
}\},$$  this result  is restated geometrically, in the more general case of a finite Weyl group, in \cite[Section 5]{BFL} as follows. Let $\sigma\in \mathfrak{S}_{n}$. Then $$S_{\sigma_0^{[p]}}\cap S_\sigma=S_{\sigma^{I_{p}}},$$ where $S_\sigma=\cup_{v\le \sigma} \mathcal{O}_v$; here $\mathcal{O}_v$ is the orbit $B vB / B$ of the Borel subgroup $B$ of the reductive group $G$ with Weyl group $W$ acting on the flag variety $G/B$, and $S_\sigma$ is the Schubert variety, also obtained as the orbit closure of $\mathcal{O}_\sigma$ (we refer  to \cite[Section 10.2]{BFL} for definitions). Note that in this case, the identity $S_\sigma=\cup_{v\le \sigma} \mathcal{O}_v$ can be taken to be the definition of the Bruhat order.

 We note the parallel  between this Schubert variety formulation of Proposition \ref{PropT} and  the corresponding Demazure crystal formulation given by Corollary \ref{Cor_Intersection}, $(1)$. Recall that a Demazure crystal is disjoint union of Demazure atoms crystals, $B_\sigma(\lambda)={\textstyle\bigsqcup\limits_{\sigma^{\prime}\in\mathfrak{S}_{n}^{\lambda
},\sigma^{\prime}\leq\sigma}}
\overline{\mathrm{B}}_{\sigma^{\prime}}(\lambda)$. (Similarly, this can be taken to be the definition
of the Bruhat order on $\mathfrak{S}_{n}^\lambda$.) Then
 \begin{align}\mathrm{B}_{\sigma}(\lambda)\cap B_{p}(\lambda)&=\mathrm{B}_{\sigma}(\lambda)\cap B_{\sigma_0^{[p]}}(\lambda)=\mathrm{B}_{p,\sigma^{I_{p}}}(\lambda)\nonumber\\
&\Leftrightarrow
{\textstyle\bigsqcup\limits_{\sigma^{\prime}\in\mathfrak{S}_{n}^{\lambda
},\sigma^{\prime}\leq\sigma}}
\overline{\mathrm{B}}_{\sigma^{\prime}}(\lambda)\cap {\textstyle\bigsqcup\limits_{\sigma^{\prime}\in\mathfrak{S}_{n}^{\lambda
},\sigma^{\prime}\leq\sigma_0^{[p]}}}
\overline{\mathrm{B}}_{\sigma^{\prime}}(\lambda)={\textstyle\bigsqcup\limits_{\sigma^{\prime}\in\mathfrak{S}_{n}^{\lambda
},\sigma^{\prime}\leq\sigma,\sigma^{\prime}\leq\sigma_0^{[p]}}}
\overline{\mathrm{B}}_{\sigma^{\prime}}(\lambda)\nonumber\\
&={\textstyle\bigsqcup\limits_{\sigma^{\prime}\in\mathfrak{S}_{n}^{\lambda
},\sigma^{\prime}\leq\sigma^{I_p},}}
\overline{\mathrm{B}}_{\sigma^{\prime}}(\lambda)
=\mathrm{B}_{p,\sigma^{I_{p}}}(\lambda).\nonumber\end{align}

\end{remark}

\subsection{Truncated staircase}

\label{Subsec_truncated}In the following, we fix $p$ and $q$ two
nonnegative integers such that $n\geq q\geq p\geq1$. We consider the Young
diagram%
\[
D_{p,q}=\{(i,j)\mid n-p+1\leq i\leq n,1\leq j\leq q\}\cap D_{\varrho}%
\]
defined by using the matrix coordinates $(i,j)$. It is the intersection
of $D_{\varrho}$ with a quarter of plane defined by the lines $i=p$ and $j=q$
(in Cartesian coordinates). When $~n-p+1\leq q$, we get the Young diagram
(see Figure~\ref{fig:trunc})
\[
D_{p,q}=D_{\Lambda(p,q)}\text{ with }\Lambda(p,q)=(q^{n-q+1},q-1,\dots
,n-p+1).
\]
We have in particular $D_{n,n}=D_{\Lambda(n,n)}=D_{\varrho}$. Observe that {if
$n-p+1>q$,} there are also other {Young sub-diagrams} appearing but they all reduce to a
rectangle and thus do not yield anything new.

%$\lambda=(m^{n-m+1},$ $m-1,$$\dots,n-k+1)$, $~1\leq q\leq n$, $~1\leq p\leq n$, $~n+1-p\le
%q$, shown in green colour  in  Figure~\ref{fig:trunc}.
\begin{figure}[h]
\begin{center}
\begin{tikzpicture} [scale=0.3] \filldraw[color=green!25] (-1,0)
rectangle (3,5); \filldraw[color=green!25] (3,0)
rectangle (5,2);  \filldraw[color=green!25] (-1,0)
rectangle (3,5); \filldraw[color=green!25] (3,1) rectangle
(5,3);\filldraw[color=green!25] (3,3) rectangle (4,4);
\draw[line width=0.5pt](-1,0)-- (-1,5);
\draw[line width=0.5pt, red](-1,6)-- (-1,8);
\draw[line width=0.5pt, red](-1,5)-- (-1,6);
\draw[line width=0.5pt](0,0)-- (0,5);
\draw[line width=0.5pt, red](0,8)-- (0,7);
\draw[line width=0.5pt, red](1,7)-- (1,6);
\draw[line width=0.5pt](1,0)-- (1,5);
\draw[line width=0.5pt](2,0)-- (2,5);
\draw[line width=0.5pt](3,0)-- (3,5);
\draw[line width=0.5pt](4,0)-- (4,4);
\draw[line width=0.5pt](-1,0)-- (5,0);
\draw[line width=0.5pt, red](4,0)-- (7,0);
\draw[line width=0.5pt](-1,1)-- (5,1);
\draw[line width=0.5pt](4,0) rectangle  (5,3);
\draw[line width=0.5pt, red](7,1)-- (6,1);
\draw[line width=0.5pt](-1,2)-- (5,2);
\draw[line width=0.5pt, red](6,2)-- (5,2);
\draw[line width=0.5pt](-1,3)-- (4,3);
\draw[line width=0.5pt](-1,4)-- (4,4);
\draw[line width=0.5pt](-1,5)-- (3,5);
\draw[line width=0.5pt,red](1,6)-- (2,6);
\draw[line width=0.5pt,red](2,5)-- (2,6);
\draw[line width=0.5pt, red](7,0)-- (7,1);
\draw[line width=0.5pt, red](6,2)-- (6,1);
\draw[line width=0.5pt, red](5,3)-- (5,2);
\draw[line width=0.5pt, red](5,3)-- (4,3);
\draw[line width=0.5pt](4,2)rectangle (5,3);
\draw[line width=0.5pt, red](-1,8)-- (0,8);
\draw[line width=0.5pt, red](1,7)-- (0,7);
\draw[line width=0.5pt] (-4,0) -- (-4,8);
\draw[line width=0.5pt] (-4,0) -- (-3.8,0);
\draw[line width=0.5pt] (-4,8) -- (-3.8,8);
\draw[line width=0.5pt] (-2,0) -- (-2,5);
\draw[line width=0.5pt] (-2,0) -- (-1.8,0);
\draw[line width=0.5pt] (-2,5) -- (-1.8,5);
\node at (-2.7,2.5){$p$}; \node at (-4.7,4){$n$};
\draw[line width=0.5pt] (-1,-1) -- (5,-1);
\draw[line width=0.5pt] (-1,-1) -- (-1,-0.8);
\draw[line width=0.5pt] (5,-1) -- (5,-0.8);\node at (2.5,-1.7){$q$};
\end{tikzpicture}
\end{center}
\caption{The truncated Ferrers shape $\Lambda(p,q)$, in green, fitting the $p$
by $q$ rectangle so that the staircase $D_{\varrho}$ of size $n$ is the
smallest one containing $\Lambda(p,q)$. If $p\leq q$, $(p,p-1,\ldots,1)$ is
the biggest staircase inside $\Lambda(p,q)$.}%
\label{fig:trunc}%
\end{figure}
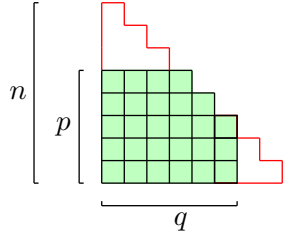

%{We may also define $D_{q,p}=D_{\Lambda(q,p)}$ to be
%$D_{\overline{\Lambda}(q,p)}$, $q>p$, the reflection of $D_{\Lambda(p,q)}$
%along the line $y=x$, with
%\[
%\Lambda(q,p):=\overline{\Lambda}(p,q)=(p^{n-p+1},p-1,\dots,n-q+1),
%\]
%the conjugate partition of $\Lambda(p,q)$.}
%\begin{Cedric}
%Do we use {$D_{q,p}$ somewhere in the paper ? If no, we could probably remove
%this.}
%\end{Cedric}
%\begin{olga}
%We may use it if we realize that is useful for the main purpose of the paper.
%\end{olga}

\begin{definition}
\label{Def_mu_tilde}For any $\mu=(\mu_{1},\ldots,\mu_{p})\in\mathbb{Z}_{\geq
0}^{p}$, let $\lambda\in\mathcal{P}_{p}$ and $\tau\in\mathfrak{S}_{p}%
^{\lambda}$ such that $\mu=\tau\lambda$. By applying $\sigma_{0}%
\in\mathfrak{S}_{n}$ to $\mu$, one gets $\sigma_{0}\mu=\sigma_{0}%
\tau(\lambda,0^{n-p})$. We set
\[
\widetilde{\mu}=(\sigma_{0}\tau)^{I_{q}}(\lambda,0^{q-p},0^{n-q}).
\]

\end{definition}

Note that $\tilde{\mu}$ has its last $n-q$ entries equal to zero because
$(\sigma_{0}\tau)^{I_{q}}\in\mathfrak{S}_{q}$. We will see in
\S \ \ref{SubsecSEmutilde} that it also has its first $q-p$ entries equal to zero.

%{ We may write $\sigma_{0}\tau=\epsilon.v$ where  $\epsilon\in \mathfrak{S}_{n}^{(\lambda,0^{n-p})}$ and  $v\in %\mathfrak{S}_{(\lambda,0^{n-p})}$ such that $\ell (\sigma_0\tau)=\ell(\epsilon)+\ell(v)$. Note that $<s_q\cdots %s_{n-1}>\subseteq\mathfrak{S}_{(\lambda,0^{q-p}, 0^{n-q})}$. Therefore  $(\sigma_{0}\tau)^{I_{q}}=(\epsilon v^{I_{q}})^{I_{q}}$ where $\alpha\in %\mathfrak{S}_{(\lambda, 0^{q-p})}$. In particular when $\lambda$ is a strict partition, $<s_p\cdots s_{n-1}>=\mathfrak{S}_{(\lambda,0^{q-p}, %0^{n-q})}$, and  $(\sigma_{0}\tau)^{I_{q}}=\epsilon^{I_{q}}$.}

%\begin{remark}
%It is a consequence of the definition of $M(w,I)$ and
%Lemma~\ref{lem1} that if $u,v\in W$ with $\ell(uv)=\ell(u)+\ell(v)$, then
%$M(uv, I)=M(u,I)M(v,I)$ (where the product is taken in $M_{W}$).
%\end{remark}

\begin{example}
Consider $\mu=(1,3,2)$ and let $q=4$ and $n=5$. Then letting $\lambda=(3,2,1)$, we have
\[
\sigma_{0}\mu=(0,0,2,3,1)=s_{2}s_{1}s_{3}s_{2}s_{4}s_{3}s_{1}(3,2,1,0,0).
\]
We have
\[
(s_{2}s_{1}s_{3}s_{2}s_{4}s_{3}s_{1})^{I_{4}}=s_{2}s_{1}s_{3}s_{2}s_{3}s_{1}%
\]
%while \begin{align}
%(\textbf{s}_1\textbf{s}_{2} \textbf{s}_{1}\textbf{s}_{3} \textbf{s}_{2}\textbf{s}_{4}\textbf{s}_{3} \textbf{s}_{1})^{I_{4}}=\textbf{s}_1\textbf{s}_{2}\textbf{s}_{1}\textbf{s}_{3}\textbf{s}_{2}\textbf{s}_{3}\textbf{s}_{1}=%
%\textbf{s}_1\textbf{s}_{2}\textbf{s}_{1}\textbf{s}_{2}\textbf{s}_{3}\textbf{s}_{2}\textbf{s}_{1}=%
%\textbf{s}_{1}\textbf{s}_{1}\textbf{s}_{2}\textbf{s}_{1}\textbf{s}_{3}\textbf{s}_{2}\textbf{s}_{1}=%
%\textbf{s}_{1}\textbf{s}_{2}\textbf{s}_{1}\textbf{s}_{3}\textbf{s}_{2}\textbf{s}_{1}\nonumber\\
%=
%\textbf{s}_{2}\textbf{s}_{1}\textbf{s}_{3}\textbf{s}_{2}\textbf{s}_{3}\textbf{s}_{1}
%\end{align}
which gives $\widetilde{\mu}=s_{2}s_{1}s_{3}s_{2}s_{3}s_{1}(\lambda
)=(0,1,2,3,0).$
\end{example}

\begin{theorem}
\label{Th_truncatedRectangle}With the above notation, the restriction of the
RSK correspondence $\psi$ to $\mathcal{M}_{n,n}^{D_{\Lambda(p,q)}}$ gives a
one-to-one correspondence%
\[
\psi:\mathcal{M}_{n,n}^{D_{\Lambda(p,q)}}\rightarrow%
%TCIMACRO{\tbigsqcup \limits_{\mu\in\mathbb{Z}_{\geq0}^{p}}}%
%BeginExpansion
{\textstyle\bigsqcup\limits_{\mu\in\mathbb{Z}_{\geq0}^{p}}}
%EndExpansion
\mathrm{\iota}(\overline{\mathrm{B}}_{p,\mu})\times\mathrm{B}_{q,\widetilde
{\mu}}.
\]
In particular, we have%
\[
\prod_{(i,j)\in D_{\Lambda(p,q)}}\frac{1}{1-x_{i}y_{j}}=\sum_{(\mu_{1}%
,\ldots,\mu_{p})\in\mathbb{Z}_{\geq0}^{p}}\overline{\mathrm{\kappa}}_{(\mu
_{p},\ldots,\mu_{1})}(x_{n},\ldots,x_{n-p+1})\mathrm{\kappa}_{\widetilde{\mu}%
}(y_{1},\ldots,y_{q}).
\]

\end{theorem}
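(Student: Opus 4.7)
The strategy is to refine Theorem~\ref{Th_LKO} by imposing the two extra vanishing conditions $a_{i,j} = 0$ for $i \leq n-p$ and for $j > q$ that cut out $D_{\Lambda(p,q)}$ from $D_\varrho$. Since Theorem~\ref{Th_LKO} already classifies matrices in $\mathcal{M}_{n,n}^{D_\varrho}$ via a bijection onto $\bigsqcup_{\lambda,\sigma} \overline{\mathrm{B}}^\sigma(\lambda) \times \mathrm{B}_\sigma(\lambda)$, the remaining work is to identify how these two vanishing conditions carve out the appropriate subset of the image.

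I first translate the support conditions through the bicrystal description of Remark~\ref{Rq_doublecrystal}: the letters appearing in $L_A$ are precisely the row indices of non-zero entries of $A$, while the non-empty tensor factors are indexed by the non-zero columns of $A$. Hence any $A \in \mathcal{M}_{n,n}^{D_{\Lambda(p,q)}}$ has $P(A)$ built only from letters in $\{n-p+1, \ldots, n\}$ (so $P(A) \in B^p(\lambda)$) and $Q(A)$ built only from letters in $\{1, \ldots, q\}$ (so $Q(A) \in B_q(\lambda)$); the common shape $\lambda$ supporting both kinds of tableaux must moreover lie in $\mathcal{P}_p$. Conversely, any pair $(P,Q)$ landing in $B^p(\lambda) \times B_q(\lambda)$ arises from an $A$ in $\mathcal{M}_{n,n}^{D_\varrho}$ whose rows $1,\ldots,n-p$ and columns $q+1,\ldots,n$ vanish, which will yield the surjectivity statement once the target set is properly identified.

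Next I intersect with the Theorem~\ref{Th_LKO} image. Corollary~\ref{Cor_Intersection}(1) gives $\mathrm{B}_\sigma(\lambda) \cap B_q(\lambda) = \mathrm{B}_{q, \sigma^{I_q}}(\lambda)$ for the second factor, and Corollary~\ref{Cor_Intersection}(3) identifies the first factor $\overline{\mathrm{B}}^\sigma(\lambda) \cap B^p(\lambda)$ as $\iota(\overline{\mathrm{B}}_{p, \sigma_0 \sigma}(\lambda))$ precisely when this intersection is non-empty. Setting $\tau := \sigma_0 \sigma$ and $\mu := \tau \lambda \in \mathbb{Z}_{\geq 0}^p$ converts the indexing of the disjoint union from the pair $(\lambda, \sigma)$ to the single parameter $\mu$, because every weak composition $\mu \in \mathbb{Z}_{\geq 0}^p$ factors uniquely as $\tau \lambda$ with $\lambda \in \mathcal{P}_p$ (sort $\mu$) and $\tau \in \mathfrak{S}_p^\lambda$. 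Under this reparametrization, Definition~\ref{Def_mu_tilde} identifies $(\sigma_0 \tau)^{I_q}(\lambda,0^{n-p})$ with $\widetilde{\mu}$, so that $\mathrm{B}_{q, \sigma^{I_q}}(\lambda) = \mathrm{B}_{q, \widetilde{\mu}}$ and the image becomes the disjoint union claimed in the statement.

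Summing the monomial weights $(xy)^A = x^{\mathrm{wt}(P(A))} y^{\mathrm{wt}(Q(A))}$ across the bijection then yields the generating function identity: the $y$-character of $\mathrm{B}_{q, \widetilde{\mu}}$ is $\kappa_{\widetilde{\mu}}(y_1, \ldots, y_q)$ by Theorem~\ref{Th_KL}(1), and the $x$-character of $\iota(\overline{\mathrm{B}}_{p, \mu})$ in the variables $x_{n-p+1},\ldots,x_n$ is the Demazure atom displayed on the right-hand side, computed from the Lusztig--Sch\"utzenberger involution as in Remark~\ref{Rq_Involution}(6)--(7), which substitutes $x_i \leftrightarrow x_{n+1-i}$ on characters. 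The main obstacle I anticipate is the careful tracking of Coxeter-monoid projections across the two Levi subgroups: one must verify that the reparametrization $(\lambda, \sigma) \leftrightarrow \mu$ is genuinely a bijection (not merely a surjection), that $\widetilde{\mu}$ as defined from $(\sigma_0 \tau)^{I_q}$ is independent of choices and matches the intersection computed by Corollary~\ref{Cor_Intersection}, and that Proposition~\ref{PropT} is used to commute the projections $(\,\cdot\,)^{I_p}$ and $(\,\cdot\,)^{I_q}$ with the parabolic stabilizer $\mathfrak{S}_\lambda$ so that the two indexings refer to the same underlying combinatorial data.
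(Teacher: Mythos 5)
Your proposal is correct and follows essentially the same route as the paper's proof: restrict the bijection of Theorem~\ref{Th_LKO} to $\mathcal{M}_{n,n}^{D_{\Lambda(p,q)}}$, use Corollary~\ref{Cor_Intersection} (points 1 and 3) to identify the intersections $\overline{\mathrm{B}}^{\sigma}(\lambda)\cap B^{p}(\lambda)$ and $\mathrm{B}_{\sigma}(\lambda)\cap B_{q}(\lambda)$, reindex the disjoint union by $\mu=\sigma_{0}\sigma\lambda\in\mathbb{Z}_{\geq0}^{p}$ so that Definition~\ref{Def_mu_tilde} produces $\widetilde{\mu}$, and take characters. The verification steps you flag at the end (well-definedness of $\widetilde{\mu}$ and bijectivity of the reparametrization) are exactly what the paper disposes of via Lemma~\ref{lem_indep_algo} and the unique factorization $\mu=\tau\lambda$ with $\tau\in\mathfrak{S}_{p}^{\lambda}$.
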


\begin{proof}
By Theorem \ref{Th_LKO} together with (\ref{OtherDec}), the restriction of the map
$\psi$ from $\mathcal{M}_{n,n}^{D\varrho}$ to $\mathcal{M}_{n,n}%
^{D_{\Lambda(p,q)}}$ gives%
\begin{align*}
\psi(\mathcal{M}_{n,n}^{D_{\Lambda(p,q)}})=%
%TCIMACRO{\tbigsqcup \limits_{\lambda\in\mathcal{P}_{n}}}%
%BeginExpansion
{\textstyle\bigsqcup\limits_{\lambda\in\mathcal{P}_{n}}}
%EndExpansion%
%TCIMACRO{\tbigsqcup \limits_{\sigma\in\mathfrak{S}_{n}^{\lambda}}}%
%BeginExpansion
{\textstyle\bigsqcup\limits_{\sigma\in\mathfrak{S}_{n}^{\lambda}}}
%EndExpansion
\overline{\mathrm{B}}^{\sigma}(\lambda)\cap B^{p}(\lambda)\times
\mathrm{B}_{\sigma}(\lambda)\cap B_{q}(\lambda)=%
%TCIMACRO{\tbigsqcup \limits_{\mu\in\mathbb{Z}_{\geq0}^{n}}}%
%BeginExpansion
{\textstyle\bigsqcup\limits_{\mu\in\mathbb{Z}_{\geq0}^{n}}}
%EndExpansion
\overline{\mathrm{B}}^{\mu}\cap B^{p}(\lambda)\times\mathrm{B}_{\mu}\cap
B_{q}(\lambda)
\end{align*}
By Corollary \ref{Cor_Intersection}, we have $\overline{\mathrm{B}}^{\sigma
}(\lambda)\cap B^{p}(\lambda)=\emptyset$ unless $\sigma\in\sigma
_{0}\mathfrak{S}_{p}^{\lambda}$, $\lambda\in\mathcal{P}_{p}$ and then
$\overline{\mathrm{B}}^{\sigma}(\lambda)\cap B^{p}(\lambda)=\mathrm{\iota
}(\overline{\mathrm{B}}_{p,\sigma_{0}\sigma}(\lambda))$. We also obtain in
this case that $\mathrm{B}_{\sigma}(\lambda)\cap B_{q}(\lambda)=\mathrm{B}%
_{q,\sigma^{I_{q}}}(\lambda).$ We thus get%
\[
\psi(\mathcal{M}_{n,n}^{D_{\Lambda(p,q)}})=%
%TCIMACRO{\tbigsqcup \limits_{\lambda\in\mathcal{P}_{p}}}%
%BeginExpansion
{\textstyle\bigsqcup\limits_{\lambda\in\mathcal{P}_{p}}}
%EndExpansion%
%TCIMACRO{\tbigsqcup \limits_{\sigma\in\mathfrak{S}_{n}^{\lambda}\cap\sigma
%_{0}\mathfrak{S}_{p}}}%
%BeginExpansion
{\textstyle\bigsqcup\limits_{\sigma\in\mathfrak{S}_{n}^{\lambda}\cap\sigma
_{0}\mathfrak{S}_{p}}}
%EndExpansion
\mathrm{\iota}(\overline{\mathrm{B}}_{p,\sigma_{0}\sigma}(\lambda))\times
B_{q,\sigma^{I_{q}}}(\lambda).
\]
As usual, one can replace the two disjoint unions on $\mathcal{P}_{p}%
\times\mathfrak{S}_{n}^{\lambda}\cap\sigma_{0}\mathfrak{S}_{p}^{\lambda}$ by a
simple disjoint union on $\mathbb{Z}_{\geq0}^{p}$ by setting $\mu=\sigma
_{0}\sigma\lambda$ with $\sigma\in\mathfrak{S}_{p}^{\lambda}$. To determine
$\sigma^{I_{q}}(\lambda)$ from $\mu$, we can compute $\lambda$ by reordering
its coordinates; one then gets $\widehat{\mu}=\sigma_{0}\mu$, and $\sigma
\in\mathfrak{S}_{n}^{\lambda}\cap\sigma_{0}\mathfrak{S}_{p}$ is determined by
the equality $\widehat{\mu}=\sigma\lambda$. Finally, one computes
$\sigma^{I_{q}}$ by applying Algorithm~\ref{algo_cox_mon} to $\sigma$.\ In particular
$\widehat{\mu}$ has its first $n-p$ coordinates equal to zero and can be
written $\widehat{\mu}=(0^{n-p},\mu_{p},\ldots,\mu_{1})$ with $(\mu
,0^{n-p})=\sigma_{0}.\widehat{\mu}=(\mu_{1},\ldots,\mu_{p},0^{n-p}%
)$.\ Therefore, $\sigma^{I_{q}}(\lambda)=\widetilde{\mu}$ as introduced in
Definition \ref{Def_mu_tilde}.\ By considering all the partitions $\lambda$ in
$\mathcal{P}_{p}$, we thus obtain%
\[
\psi(\mathcal{M}_{n,n}^{D_{\Lambda(p,q)}})=%
%TCIMACRO{\tbigsqcup \limits_{\mu\in\mathbb{Z}_{\geq0}^{p}}}%
%BeginExpansion
{\textstyle\bigsqcup\limits_{\mu\in\mathbb{Z}_{\geq0}^{p}}}
%EndExpansion
\mathrm{\iota}(\overline{\mathrm{B}}_{p,\mu})\times\mathrm{B}_{q,\widetilde
{\mu}}%
\]
with $\widetilde{\mu}=\sigma^{I_{q}}\lambda$ in each set of the disjoint
union. Finally, we get the Cauchy-like identity by considering the characters
of both sides of the set equality.
\end{proof}
\begin{remark}{
When $p=q=n$, $\tilde\mu=\sigma_0\mu$ and we recover Theorem \ref{Th_LKO}
\begin{align*}
\psi:\mathcal{M}_{n,n}^{D_{\Lambda(n,n)}}&\rightarrow%
%TCIMACRO{\tbigsqcup \limits_{\mu\in\mathbb{Z}_{\geq0}^{p}}}%
%BeginExpansion
{\textstyle\bigsqcup\limits_{\mu\in\mathbb{Z}_{\geq0}^{n}}}
%EndExpansion
\mathrm{\iota}(\overline{\mathrm{B}}_{\mu})\times\mathrm{B}_{\sigma_0
{\mu}}={\textstyle\bigsqcup\limits_{\mu\in\mathbb{Z}_{\geq0}^{n}}}
%EndExpansion
\mathrm{\iota}(\overline{\mathrm{B}}_{\sigma_0\mu})\times\mathrm{B}_{
{\mu}}={\textstyle\bigsqcup\limits_{\mu\in\mathbb{Z}_{\geq0}^{n}}}
%EndExpansion
\overline{\mathrm{B}}^{\mu}\times\mathrm{B}_{
{\mu}}\\
A&\mapsto \psi(A)=(P(A),Q(A)): K_+(Q(A))\le K^-(P(A)).
\end{align*}
}
\end{remark}

\subsection{Demazure operators on crystals and augmented staircases}

\label{Subsec_ASC}Consider a partition $\lambda$ in $\mathcal{P}_{n}$ and any
subset $\Omega$ of $B(\lambda)$.\ We define the \textsf{character} of $\Omega$ by
setting%
\[
\mathrm{char}(\Omega)=\mathrm{char}(\Omega)(x_{1},\ldots,x_{n})=\sum
_{b\in\Omega}x^{\mathrm{wt}(b)}.
\]
Observe that
\begin{equation}
\mathrm{char}(\mathrm{\iota}(\Omega))=\sum_{b\in\Omega}x^{\sigma
_{0}\mathrm{wt}(b)}=\mathrm{char}(\Omega)(x_{n},\ldots,x_{1}).
\label{CharInvolution}%
\end{equation}
For any $i=1,\ldots,n-1$, denote by $\Delta_{i}(\Omega)$ the subset of $B(\lambda)$
obtained from $\Omega$ by applying operators $\tilde{f}_{i}^{k},k\geq0$ to the
vertices in $\Omega$, that is%
\[
\Delta_{i}(\Omega)=\{b\in B(\lambda)\mid\exists k\in\mathbb{Z}_{\geq0},\tilde
{e}_{i}^{k}(b)\in \Omega\}.
\]
By Remark~\ref{rmk_chains}, for any $\sigma\in\mathfrak{S}_{n}$ and any
$i=1,\ldots,n-1$, we have%
\begin{equation}
\Delta_{i}(\mathrm{B}_{\sigma}(\lambda))=\mathrm{B}_{\pi_{i}(\sigma\lambda
)}=\left\{
\begin{array}
[c]{l}%
\mathrm{B}_{s_{i}\sigma}(\lambda)\text{ if }\ell(s_{i}\sigma)=\ell
(\sigma)+1\text{ and }s_{i}\sigma\lambda\neq\sigma\lambda,\\
\mathrm{B}_{\sigma}(\lambda)\text{ if }\ell(s_{i}\sigma)=\ell(\sigma
)-1\mbox{ or }s_{i}\sigma\lambda=\sigma\lambda,
\end{array}
\right.  \label{Delta_iDemazure}%
\end{equation}
that is,
\[
\Delta_{i}(\mathrm{B}_{\mu})=\mathrm{B}_{\pi_{i}(\mu)}\text{ with }\mu
=\sigma\lambda
,\]
where $\pi_i$ is as defined in (\ref{Bubble}). In particular, we have $\Delta_{i}^{2}%
(\mathrm{B}_{\sigma}(\lambda))=\Delta_{i}(\mathrm{B}_{\sigma}(\lambda)).$ This
thus gives%
\begin{equation}
\sum_{b\in\Delta_{i}(\mathrm{B}_{\sigma}(\lambda))}x^{\mathrm{wt}(b)}%
=D_{i}\left(  \sum_{b\in\mathrm{B}_{\sigma}(\lambda)}x^{\mathrm{wt}%
(b)}\right)  =D_{i}(\mathrm{\kappa}_{\sigma,\lambda})=\mathrm{\kappa}_{\pi
_{i}(\sigma\lambda)} \label{Delta_iCharacter}%
\end{equation}
and by using (\ref{D_i(Demazure)}) one can interpret $\Delta_{i}$ as an
operator on Demazure crystals analogous to the operator $D_{i}$ on Demazure
characters. For the atoms, we get the following lemma.

\begin{lemma}
\label{Lemma_Delta_i_atoms}For any $\sigma$ in $\mathfrak{S}_{n}$ and any
$s_{i}$ such that $\ell(s_{i}\sigma)=\ell(\sigma)+1$ and $s_{i}\sigma
\lambda\neq\sigma\lambda$, we have%
\[
\Delta_{i}(\overline{\mathrm{B}}_{\sigma}(\lambda))=\overline{\mathrm{B}%
}_{\sigma}(\lambda)%
%TCIMACRO{\tbigsqcup }%
%BeginExpansion
{\textstyle\bigsqcup}
%EndExpansion
\overline{\mathrm{B}}_{s_{i}\sigma}(\lambda)
\]
and
\begin{equation}
\overline{\mathrm{\kappa}}_{s_{i}\sigma,\lambda}+\overline{\mathrm{\kappa}%
}_{\sigma,\lambda}=\sum_{b\in\Delta_{i}(\overline{\mathrm{B}}_{\sigma}%
(\lambda))}x^{\mathrm{wt}(b)}=D_{i}\left(  \sum_{b\in\overline{\mathrm{B}%
}_{\sigma}(\lambda)}x^{\mathrm{wt}(b)}\right)  =D_{i}(\overline{\mathrm{\kappa
}}_{\lambda,\sigma}). \label{Delta_iAtoms}%
\end{equation}

\end{lemma}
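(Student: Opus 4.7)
I plan to prove the character identity first, then the set equality, and observe that the middle equality $\sum_{b \in \Delta_i(\overline{\mathrm{B}}_\sigma(\lambda))} x^{\mathrm{wt}(b)} = D_i\bigl(\sum_{b \in \overline{\mathrm{B}}_\sigma(\lambda)} x^{\mathrm{wt}(b)}\bigr)$ follows by summing weight monomials on both sides of the set equality. For the character identity $D_i(\overline{\kappa}_{\sigma,\lambda}) = \overline{\kappa}_{\sigma,\lambda} + \overline{\kappa}_{s_i\sigma,\lambda}$, I invoke the operator $D'_i = D_i - \mathrm{id}$ of Remark~\ref{Rq_Involution}(2): since $\ell(s_i\sigma) = \ell(\sigma)+1$, prepending $s_i$ to a reduced word of $\sigma$ yields a reduced word of $s_i\sigma$, so $\overline{\kappa}_{s_i\sigma,\lambda} = D'_i(\overline{\kappa}_{\sigma,\lambda}) = (D_i - \mathrm{id})(\overline{\kappa}_{\sigma,\lambda})$, and rearranging gives the identity.

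For the set equality, I analyze the distribution of $i$-chains among Demazure atoms. Theorem~\ref{Th_KL}(3) together with the Bruhat monotonicity $\tau \leq \tau' \Rightarrow \mathrm{B}_\tau(\lambda) \subseteq \mathrm{B}_{\tau'}(\lambda)$ shows that each $i$-chain $C \subseteq B(\lambda)$ admits a pair $\tau_{\min}(C) \leq \tau_{\max}(C)$ in $\mathfrak{S}_n^\lambda$ with $s(C) \in \overline{\mathrm{B}}_{\tau_{\min}(C)}(\lambda)$ and, when $|C| > 1$, $C \setminus \{s(C)\} \subseteq \overline{\mathrm{B}}_{\tau_{\max}(C)}(\lambda)$. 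The crucial refinement is that for any non-trivial chain one actually has $(\tau_{\min}(C)\lambda)_i > (\tau_{\min}(C)\lambda)_{i+1}$ and $\tau_{\max}(C) = s_i\tau_{\min}(C)$ as a Bruhat cover. The equality case $(\tau_{\min}\lambda)_i = (\tau_{\min}\lambda)_{i+1}$ is ruled out via~\eqref{Delta_iDemazure}, which then gives $\Delta_i(\mathrm{B}_{\tau_{\min}}(\lambda)) = \mathrm{B}_{\tau_{\min}}(\lambda)$ and forces $|C| = 1$; the opposite strict inequality is ruled out by applying Remark~\ref{rmk_chains} to $(s_i\tau_{\min}, i)$ and noting that $s(C) \notin \mathrm{B}_{s_i\tau_{\min}}(\lambda)$; the remaining case is handled by Remark~\ref{rmk_chains} applied to $(\tau_{\min}, i)$ combined with the fact that $s_i\tau_{\min}$ covers $\tau_{\min}$.

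Both inclusions of the set equality then follow by chain tracking. For $\Delta_i(\overline{\mathrm{B}}_\sigma(\lambda)) \subseteq \overline{\mathrm{B}}_\sigma(\lambda) \sqcup \overline{\mathrm{B}}_{s_i\sigma}(\lambda)$, any $b'' = \tilde f_i^k(b)$ with $b \in \overline{\mathrm{B}}_\sigma(\lambda)$ lies in one of the two atoms according to whether Remark~\ref{rmk_chains} places the $i$-chain of $b$ in its second or third case. Conversely, for $b' \in \overline{\mathrm{B}}_{s_i\sigma}(\lambda)$, Remark~\ref{rmk_chains} rules out $b'$ being a source of its $i$-chain (otherwise $b' \in \mathrm{B}_\sigma(\lambda)$, contradicting $K_+(b') = s_i\sigma$), so $b'$ is non-source; the refinement $\tau_{\max} = s_i\tau_{\min}$ then pins $\tau_{\min}$ of its chain to $\sigma$, placing the source in $\overline{\mathrm{B}}_\sigma(\lambda)$ and exhibiting $b' \in \Delta_i(\overline{\mathrm{B}}_\sigma(\lambda))$. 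The main technical obstacle is establishing the structural identity $\tau_{\max}(C) = s_i\tau_{\min}(C)$ for non-trivial chains: eliminating the two degenerate weight configurations is the step that does most of the work and cannot be shortcut via the divided-difference argument.
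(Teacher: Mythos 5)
Your derivation of the character identity is correct and is essentially the argument the paper leaves implicit (via $D_i'=D_i-\mathrm{id}$ from Remark~\ref{Rq_Involution}). The problem is the set equality: your chain-tracking argument rests on the ``crucial refinement'' that every non-trivial $i$-chain $C$ satisfies $(\tau_{\min}(C)\lambda)_i>(\tau_{\min}(C)\lambda)_{i+1}$ and $\tau_{\max}(C)=s_i\tau_{\min}(C)$, and this claim is false. Counterexample: $n=3$, $\lambda=(2,2,0)$, $i=2$. The atom $\overline{\mathrm{B}}_{s_1s_2}(\lambda)$ has extremal weight $s_1s_2\lambda=(0,2,2)$ (so $\mu_2=\mu_3$) and consists of the three tableaux with (bottom row, top row) equal to $(22,33)$, $(12,23)$ and $(12,33)$. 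The last two form a complete non-trivial $2$-chain ($\tilde e_2$ kills the first, $\tilde f_2$ kills the second, and $\tilde f_2$ sends the first to the second) entirely contained in this single atom, so $\tau_{\min}(C)=\tau_{\max}(C)=s_1s_2$. Your attempted exclusion of the equality case is exactly where the argument breaks: from $\Delta_i(\mathrm{B}_{\tau_{\min}}(\lambda))=\mathrm{B}_{\tau_{\min}}(\lambda)$ you can only conclude $C\subseteq\mathrm{B}_{\tau_{\min}}(\lambda)$, hence (comparing keys) $C\subseteq\overline{\mathrm{B}}_{\tau_{\min}}(\lambda)$ --- which is precisely the configuration you wanted to rule out, not a contradiction. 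Since you yourself identify this refinement as the step that does most of the work, this is a genuine gap.

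The gap is repairable because under the lemma's hypotheses the offending configuration cannot involve the two atoms at stake. For the reverse inclusion, your own use of Remark~\ref{rmk_chains} already shows that the source $s(C)$ of the chain of any $b'\in\overline{\mathrm{B}}_{s_i\sigma}(\lambda)$ lies in $\mathrm{B}_\sigma(\lambda)$, hence $K_+(s(C))\leq b_{\sigma\lambda}$; combined with $K_+(b')\in\{K_+(s(C)),s_iK_+(s(C))\}$ and $K_+(b')=b_{s_i\sigma\lambda}\not\leq b_{\sigma\lambda}$, this pins $K_+(s(C))=b_{\sigma\lambda}$ with no appeal to the false general statement. For comparison, the paper's proof is much shorter: the forward inclusion is the single observation $K_+(\tilde f_i(b))\in\{K_+(b),s_iK_+(b)\}$, and the reverse inclusion uses $\varepsilon_i(b_{s_i\sigma\lambda})>0$ (a consequence of $\ell(s_i\sigma)=\ell(\sigma)+1$) together with the tensor product rule to produce $k$ with $\tilde e_i^k(b')\in\overline{\mathrm{B}}_\sigma(\lambda)$.
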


\begin{proof}
For any $b$ in $B(\lambda)$ and $i=1,\ldots,n-1$, we have by definition of the
key $K_{+}$%
\[
K_{+}(\tilde{f}_{i}(b))\in\{K_{+}(b),s_{i}K_{+}(b)\}.
\]
This gives%
\[
\Delta_{i}(\overline{\mathrm{B}}_{\sigma}(\lambda))\subset\overline
{\mathrm{B}}_{\sigma}(\lambda)%
%TCIMACRO{\tbigsqcup }%
%BeginExpansion
{\textstyle\bigsqcup}
%EndExpansion
\overline{\mathrm{B}}_{s_{i}\sigma}(\lambda)\text{.}%
\]

Conversely, it is clear that $\overline{\mathrm{B}}_{\sigma}(\lambda
)\subset\Delta_{i}(\overline{\mathrm{B}}_{\sigma}(\lambda))$ by definition of
$\Delta_{i}$. Now, if $b^{\prime}$ belongs to $\overline{\mathrm{B}}%
_{s_{i}\sigma}(\lambda)$, we have $K_{+}(b^{\prime})=s_{i}\sigma$ with
$\varepsilon_{i}(b_{K_{+}(b^{\prime})})>0$ because $\ell(s_{i}\sigma
)=\ell(\sigma)+1$. By the tensor product rules in crystals (\ref{tens_crys}),
there exists an integer $k$ such that $K_{+}(\tilde{e}_{i}^{k}b^{\prime
})=\sigma$, that is such that $\tilde{e}_{i}^{k}b^{\prime}\in\overline
{\mathrm{B}}_{\sigma}(\lambda)$.\ This shows the inclusion $\overline
{\mathrm{B}}_{s_{i}\sigma}(\lambda)\subset\Delta_{i}(\overline{\mathrm{B}%
}_{\sigma}(\lambda))$. The equality of characters follows from the equality of sets.
\end{proof}

%\newpage
\begin{remark}
\label{Remark_Delta_i_mu} \

\begin{enumerate}
\item Here again, we can reformulate (\ref{Delta_iDemazure}) and Lemma
\ref{Lemma_Delta_i_atoms} by setting $\mu=\sigma\lambda$. Using Lemma~\ref{lem_equiv_mu_weyl}, this gives%
\[
\Delta_{i}(\mathrm{B}_{\mu})=\left\{
\begin{array}
[c]{l}%
\mathrm{B}_{s_{i}\mu}\text{ if }\mu_{i}>\mu_{i+1},\\
\mathrm{B}_{\mu}\text{ otherwise},%
\end{array}
\right.
\]
and%
\[
\Delta_{i}(\overline{\mathrm{B}}_{\mu})=\overline{\mathrm{B}}_{\mu}%
%TCIMACRO{\tbigsqcup }%
%BeginExpansion
{\textstyle\bigsqcup}
%EndExpansion
\overline{\mathrm{B}}_{s_{i}\mu}\text{ if }\mu_{i}>\mu_{i+1}.
\]

\item Observe that Lemma \ref{Lemma_Delta_i_atoms} does not {remain} true when
$\mu_{i}<\mu_{i+1}$. In this case, we indeed have $\Delta_{i}(\overline
{\mathrm{B}}_{\mu})=\overline{\mathrm{B}}_{\mu}$ whereas $D_{i}(\overline
{\mathrm{\kappa}}_{\mu})=0$, as can be seen from~\eqref{Delta_iAtoms}. Thus, to mimic the action of the operator
$D_{i}$ on $\overline{\mathrm{\kappa}}_{\lambda,\sigma}$ at the level of its
associated Demazure atoms, we need to replace the action of $\Delta_{i}$ on
$\overline{\mathrm{B}}_{\mu}(\lambda)$ by%
\begin{equation}
\dot{\Delta}_{i}(\overline{\mathrm{B}}_{\mu})=\left\{
\begin{array}
[c]{l}%
\Delta_{i}(\overline{\mathrm{B}}_{\mu})=\overline{\mathrm{B}}_{\mu}%
%TCIMACRO{\tbigsqcup }%
%BeginExpansion
{\textstyle\bigsqcup}
%EndExpansion
\overline{\mathrm{B}}_{s_{i}\mu}\text{ if }\mu_{i}>\mu_{i+1}\\
\Delta_{i}(\overline{\mathrm{B}}_{\mu})=\overline{\mathrm{B}}_{\mu}\text{ if
}\mu_{i}=\mu_{i+1},\\
\emptyset\text{ if }\mu_{i}<\mu_{i+1}.
\end{array}
\right.  \label{DeltaDot}%
\end{equation}
We then always have
\[
\mathrm{char}(\dot{\Delta}_{i}(\overline{\mathrm{B}}_{\mu}))=D_{i}%
(\overline{\mathrm{\kappa}}_{\mu}).
\]

\end{enumerate}
\end{remark}

We may linearize the action described in (\ref{DeltaDot}) above by defining an
action of the{ monoid of} Demazure operators $D_{i}$ on a free $\mathbb{Z}%
$-module of rank $|\mathfrak{S}_{n}^{\lambda}|$ generated by the formal
symbols{ $\{\bar{c}_{\sigma\lambda}:\sigma\in\mathfrak{S}_{n}^{\lambda}\}$,
written $\bigoplus_{\sigma\in\mathfrak{S}_{n}^{\lambda}}\mathbb{Z}\bar
{c}_{\sigma\lambda}$}, by setting

%\begin{olga}
%It sounds me better to have an action of a monoid on a free module. Has this
%linear representation already been studied?
%\end{olga}
%\begin{olga}
%Since $\kappa$ is already captive I have replaced it with $\bar c$ (to remind
%Demazure atoms $\bar\kappa$).
%\end{olga}

%

\begin{equation}
D_{i}(\bar{c}_{\sigma\lambda})=\left\{
\begin{array}
[c]{l}%
\bar{c}_{\sigma\lambda}+\bar{c}_{s_{i}\sigma\lambda}\text{ if }\mu
=\sigma\lambda~\text{satisfies}~\mu_{i}>\mu_{i+1}\\
\bar{c}_{\sigma\lambda}\text{ if }\mu=\sigma\lambda~\text{satisfies}~\mu
_{i}=\mu_{i+1},\\
0\text{ if }\mu=\sigma\lambda~\text{satisfies}~\mu_{i}<\mu_{i+1}.
\end{array}
\right.  \label{D_i(Cbar)}%
\end{equation}
These operators satisfy the braid relations together with the relations
$D_{i}^{2}=D_{i}$, hence for every $w\in\mathfrak{S}_{n}$ we can write $D_{w}$
to mean $D_{i_{1}}D_{i_{2}}\cdots D_{i_{k}}$, where $s_{i_{1}}s_{i_{2}}\cdots
s_{i_{k}}$ is a reduced decomposition of $w$ in $\mathfrak{S}_{n}$. Note that the conditions on the weight $\mu$ can be entirely reformulated in terms of the Weyl group $\mathfrak{S}_n$ (Lemma~\ref{lem_equiv_mu_weyl}).

{The following lemma establishes crucial properties of the action of the
operators }$D_{w}$ on the basis {$\{\bar{c}_{\sigma\lambda}:\sigma
\in\mathfrak{S}_{n}^{\lambda}\}$, which will be used in the proof of
Theorem~\ref{Th_RSK_ASC} below.}

\begin{lemma}
\label{Lemma_Thomas}We have

\begin{enumerate}
\item Let $A\subseteq\mathfrak{S}_{n}^{\lambda}$ and $w\in\mathfrak{S}_{n}$.
Then there exists $B\subseteq\mathfrak{S}_{n}^{\lambda}$ such that
\[
D_{w} \left(  \sum_{\sigma\in A} \bar c_{\sigma\lambda}\right)  =\sum
_{\sigma\in B}\bar c_{\sigma\lambda}.
\]

\item Let $\tau, \tau^{\prime}\in\mathfrak{S}_{n}^{\lambda}$ with $\tau
\neq\tau^{\prime}, w\in\mathfrak{S}_{n}$. Then there are $A_{1},
A_{2}\subseteq\mathfrak{S}_{n}^{\lambda}$ with $A_{1} \cap A_{2} = \emptyset$
such that $D_{w}(\bar c_{\tau\lambda})=\sum_{\sigma\in A_{1}}\bar
c_{\sigma\lambda}$ and $D_{w}(\bar c_{\tau^{\prime}\lambda})=\sum_{\sigma\in
A_{2}} \bar c_{\sigma\lambda}$.
\end{enumerate}
\end{lemma}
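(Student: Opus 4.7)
The plan is to prove both parts simultaneously by induction on $\ell(w)$, reducing to the case $w = s_i$ and then exploiting a key ``unique preimage'' property of the operator $D_i$ acting on the given basis.

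The preparatory step is to establish the following: for each $i\in\{1,\ldots,n-1\}$ and each $\sigma\in\mathfrak{S}_n^{\lambda}$, there is exactly one element $\phi_i(\sigma)\in\mathfrak{S}_n^{\lambda}$ such that $\bar{c}_{\sigma\lambda}$ appears (necessarily with coefficient $1$) in the expansion of $D_i(\bar{c}_{\phi_i(\sigma)\lambda})$. Writing $\mu=\sigma\lambda$, the candidates for $\phi_i(\sigma)$ are $\sigma$ itself and, when it lies in $\mathfrak{S}_n^{\lambda}$, the element $s_i\sigma$; by Lemma~\ref{lem_equiv_mu_weyl}, the latter is in $\mathfrak{S}_n^{\lambda}$ precisely when $\mu_i\neq\mu_{i+1}$. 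A direct case analysis from~\eqref{D_i(Cbar)} then gives
\[
\phi_i(\sigma)=
\begin{cases}
\sigma & \text{if } \mu_i\geq \mu_{i+1},\\
s_i\sigma & \text{if } \mu_i<\mu_{i+1}.
\end{cases}
\]
In each case one checks that the alternative candidate either produces $0$ under $D_i$, or produces a basis element different from $\bar{c}_{\sigma\lambda}$; this is the heart of the argument and the main (though elementary) obstacle.

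With this observation, the induction becomes routine. For the base case $\ell(w)=0$ both statements are immediate. For the inductive step, write $w=s_iw'$ with $\ell(w')=\ell(w)-1$. For (1), the induction hypothesis yields $D_{w'}\bigl(\sum_{\sigma\in A}\bar{c}_{\sigma\lambda}\bigr)=\sum_{\sigma\in B'}\bar{c}_{\sigma\lambda}$; applying $D_i$ and using the uniqueness of preimages, the coefficient of each $\bar{c}_{\tau\lambda}$ in the resulting sum is $1$ if $\phi_i(\tau)\in B'$ and $0$ otherwise, so $B:=\phi_i^{-1}(B')$ satisfies the required identity.

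For (2), by induction we have $D_{w'}(\bar{c}_{\tau\lambda})=\sum_{\sigma\in A_1'}\bar{c}_{\sigma\lambda}$ and $D_{w'}(\bar{c}_{\tau'\lambda})=\sum_{\sigma\in A_2'}\bar{c}_{\sigma\lambda}$ with $A_1'\cap A_2'=\emptyset$. Applying (1) of the inductive step gives $D_w(\bar{c}_{\tau\lambda})=\sum_{\sigma\in A_1}\bar{c}_{\sigma\lambda}$ and $D_w(\bar{c}_{\tau'\lambda})=\sum_{\sigma\in A_2}\bar{c}_{\sigma\lambda}$ with $A_1=\phi_i^{-1}(A_1')$ and $A_2=\phi_i^{-1}(A_2')$; disjointness is preserved under preimages, so $A_1\cap A_2=\phi_i^{-1}(A_1'\cap A_2')=\emptyset$. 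Finally, since the operators $D_w$ are well-defined on the monoid (they satisfy the braid and $D_i^2=D_i$ relations), the choice of reduced decomposition $w=s_iw'$ is immaterial, and the proof is complete.
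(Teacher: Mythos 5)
Your proof is correct. The underlying computation is the same as the paper's — a case analysis on whether $\mu_i>\mu_{i+1}$, $\mu_i=\mu_{i+1}$ or $\mu_i<\mu_{i+1}$ for $\mu=\sigma\lambda$, using Lemma~\ref{lem_equiv_mu_weyl} to control when $s_i\sigma$ stays in $\mathfrak{S}_n^\lambda$ — but you package it differently. You isolate a ``unique $D_i$-ancestor'' function $\phi_i$ on $\mathfrak{S}_n^\lambda$ and observe that $D_i$ acts on indexing sets by $B'\mapsto\phi_i^{-1}(B')$, from which both parts follow by induction (disjointness in part (2) being automatic since preimages under a function preserve disjointness). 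The paper instead proves part (1) by splitting $A=A_1\sqcup A_2\sqcup A_3$ according to the three cases and checking that $A_1\sqcup s_iA_1\sqcup A_3$ is still disjoint (equivalent information to your injectivity of ancestors), and then derives part (2) by a purely formal additivity argument: applying part (1) to the two-element set $\{\tau,\tau'\}$ produces a multiplicity-free sum that must equal $\sum_{A_1}+\sum_{A_2}$, forcing $A_1\cap A_2=\emptyset$ without ever tracking preimages. Your route makes the mechanism more transparent and gives slightly more (an explicit description of $B$ and of $A_1$, $A_2$ as iterated preimages); the paper's part (2) is shorter and avoids having to verify the uniqueness of the ancestor, which is the one step you leave as ``one checks'' — it does check out, but you should write out at least the case $\mu_i>\mu_{i+1}$, where the alternative candidate $\tau$ with $\tau\lambda=s_i\mu$ satisfies $(\tau\lambda)_i<(\tau\lambda)_{i+1}$ and hence $D_i(\bar c_{\tau\lambda})=0$.
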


%\begin{olga}
%Demazure operators preserve multiplicity freeness in $\bigoplus_{\sigma
%\in\mathfrak{S}_{n}^{\lambda}} \mathbb{Z}\bar c_{\sigma\lambda}$
%\end{olga}

\begin{proof}
Let us first prove the first point. By induction on the length $\ell(w)$ of
$w$, it suffices to prove the result for $w=s_{i}$, where $i\in\{1,2,\dots
,n-1\}$. Let $A=A_{1}\bigsqcup A_{2}\bigsqcup A_{3}$, where $A_{1}=\{\sigma\in
A\ |\ \mu=\sigma\lambda~\text{satisfies}~\mu_{i}>\mu_{i+1}\}$, $A_{2}%
=\{\sigma\in A\ |\ \mu=\sigma\lambda~\text{satisfies}~\mu_{i}<\mu_{i+1}\}$,
and $A_{3}=\{\sigma\in A\ |\ \mu=\sigma\lambda~\text{satisfies}~\mu_{i}%
=\mu_{i+1}\}$. By (\ref{D_i(Cbar)}) we have
\begin{align*}
D_{i}\left(  \sum_{\sigma\in A}\bar{c}_{\sigma\lambda}\right)   &
=D_{i}\left(  \sum_{\sigma\in A_{1}}\bar{c}_{\sigma\lambda}\right)
+D_{i}\left(  \sum_{\sigma\in A_{2}}\bar{c}_{\sigma\lambda}\right)
+D_{i}\left(  \sum_{\sigma\in A_{3}}\bar{c}_{\sigma\lambda}\right)  =\\
&  =\left(  \sum_{\sigma\in A_{1}}(\bar{c}_{\sigma\lambda}+\bar{c}%
_{s_{i}\sigma\lambda})\right)  +0+\sum_{\sigma\in A_{3}}\bar{c}_{\sigma
\lambda}\\
&  =\sum_{\sigma\in A_{1}}\bar{c}_{\sigma\lambda}+\sum_{\sigma\in s_{i}A_{1}%
}\bar{c}_{\sigma\lambda}+\sum_{\sigma\in A_{3}}\bar{c}_{\sigma\lambda}.
\end{align*}
To conclude the proof, it suffices to notice that $s_{i}A_{1}\subseteq
\{\sigma\in\mathfrak{S}_{n}^{\lambda}\ |\ \mu=\sigma\lambda~\text{satisfies}%
~\mu_{i}<\mu_{i+1}\}$, hence the union $A_{1}\bigcup s_{i}A_{1}\bigcup A_{3}$
is still disjoint. Therefore setting $B:=A_{1}\bigsqcup s_{i}A_{1}\bigsqcup
A_{3}$ we get the result.

We now prove the second point. By the first point, there is $B\subseteq
\mathfrak{S}_{n}^{\lambda}$ such that $D_{w}(\bar c_{\tau\lambda}+\bar
c_{\tau^{\prime}\lambda})=\sum_{\sigma\in B} \bar c_{\sigma\lambda}$. But,
still by the first point, there are also $A_{1}, A_{2}\subseteq\mathfrak{S}%
_{n}^{\lambda}$ such that $D_{w}(\bar c_{\tau\lambda})=\sum_{\sigma\in A_{1}}
\bar c_{\sigma\lambda}$ and $D_{w}(\bar c_{\tau^{\prime}\lambda})=\sum
_{\sigma\in A_{2}} \bar c_{\sigma\lambda}$. We thus have
\[
\sum_{\sigma\in B} \bar c_{\sigma\lambda}=D_{w}(\bar c_{\tau\lambda}+\bar
c_{\tau^{\prime}\lambda})=D_{w}(\bar c_{\tau\lambda})+D_{w}(\bar
c_{\tau^{\prime}\lambda})=\sum_{\sigma\in A_{1}} \bar c_{\sigma\lambda}%
+\sum_{\sigma\in A_{2}} \bar c_{\sigma\lambda},
\]
which forces $B$ to be the disjoint union of $A_{1}$ and $A_{2}$.
\end{proof}

\bigskip

In {\cite{Las2}} Lascoux gave other non-symmetric Cauchy type identities for any
partition $\Lambda\in\mathcal{P}_{n}$. The idea is to consider the largest
staircase $\rho_{\Lambda}=(m,m-1,\ldots,1)$ contained in the Young diagram of
$\Lambda$. Then one can choose a box $b$ at position $(i_{0},j_{0})$, {in
Cartesian coordinates}, in the augmented staircase $(m+1,m,\ldots,1)$ which is
not in $\Lambda$. The diagonal $L_{i,j}:j-i=j_{0}-i_{0}$, { in Cartesian
coordinates}, cuts $\Lambda$ in a northwest part and a southeast part
corresponding to the boxes above and below $L_{i,j}$, respectively.\ Now fill
the boxes $(i,j)$, {in the $n\times n$ matrix convention}, of the $NW$ part of
$\Lambda$ by $ n-i$ (\textit{i.e.}, by the { $n\times n$ matrix { reverse row index (equivalently counting rows from bottom to top)} minus one)},
and the boxes $(i,j)$ of the $SE$ part by $j-1$ (\textit{i.e.}, {by the index of the
column minus one}). Let $\sigma(\Lambda,NW)=s_{i_{1}}\cdots s_{i_{a}}$ be the
element of $\mathfrak{S}_{n}$ {where the word $i_{1}\cdots{i_{a}}$ is obtained
from} right to left column reading of the $NW$ part of $\Lambda,$ each column
being {read} from top to bottom. Similarly, let $\sigma(\Lambda,SE)=s_{j_{1}%
}\cdots s_{j_{b}}$ be the element of $\mathfrak{S}_{n}$ {where the word
${j_{1}}\cdots j_{b}$ is obtained from} {top to bottom row reading} of the
$SE$ part of $\Lambda,$ each row being {read} from right to left.
%generated by $s_{i},i=1,\ldots,p$

\begin{example}
Let $n=8$ and $\Lambda=(7,4,2,2,2)$. Take $(i_{0},j_{0})=(3,3)$. We have
$m=4$ and $\rho_{\Lambda}=(4,3,2,1)$,%
\[
\Lambda=%
\begin{tabular}
[c]{|l|l|lllll}\cline{1-2}%
$4$ & $ 4$ &  &  &  &  & \\\cline{1-2}%
$\blacksquare$ & $ 3$ &  &  &  &  & \\\cline{1-1}\cline{1-2}%
$\blacksquare$ & $\blacksquare$ & $\blacktriangle$ &  &  &  & \\\cline{1-2}%
\cline{1-4}%
$\blacksquare$ & $\blacksquare$ & $\blacksquare$ & \multicolumn{1}{|l}{$3$} &
\multicolumn{1}{|l}{} &  & \\\hline
$\blacksquare$ & $\blacksquare$ & $\blacksquare$ &
\multicolumn{1}{|l}{$\blacksquare$} & \multicolumn{1}{|l}{$4$} &
\multicolumn{1}{|l}{$5$} & \multicolumn{1}{|l|}{$6$}\\\hline
\end{tabular}
\ \
,\]
and we have $\sigma(\Lambda,NW)= s_{4}s_{3}s_{4}$, and $\sigma(\Lambda,SE)=s_{3}%
s_{6}s_{5}s_{4}$.
\end{example}

The following theorem was established in {\cite{Las2}} and reproved { for near stair shapes} in
\cite{AO2}.

\begin{theorem}
\label{Th_ASC}With the previously introduced notation, we have%
\[
\prod_{(i,j)\in\Lambda}\frac{1}{1-x_{i}y_{j}}=\sum_{(\mu_{1},\ldots,\mu
_{m})\in\mathbb{Z}^{m}}D_{\sigma(\Lambda,NW)}\overline{\mathrm{\kappa}}%
_{(\mu_{m},\ldots,\mu_{1})}(x_{n},\ldots,x_{n-m+1})D_{\sigma(\Lambda
,SE)}\mathrm{\kappa}_{(\mu_{1},\ldots,\mu_{m})}(y_{1},\ldots,y_{m}),
\]
where $D_{\sigma(\Lambda,NW)}=D_{i_{1}}\cdots D_{i_{a}}$ and $D_{\sigma
(\Lambda,SE)}=D_{j_{1}}\cdots D_{j_{b}}$.
\end{theorem}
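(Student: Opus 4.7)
The plan is to prove the identity by induction on the number of boxes of $\Lambda$ outside the maximal staircase $\rho_\Lambda$. The base case (when $\Lambda$ is the staircase $\rho_\Lambda$, or more generally a truncated staircase) reduces to Theorem~\ref{Th_LKO} (or its refinement Theorem~\ref{Th_truncatedRectangle}) applied in the variables $x_{n-m+1},\ldots,x_n$ and $y_1,\ldots,y_m$, with a choice of $b=(i_0,j_0)$ making $\sigma(\Lambda,NW)$ and $\sigma(\Lambda,SE)$ trivial; the inductive step adjoins one box at a time to $\Lambda$, each adjunction being realised by a Demazure operator acting simultaneously on both sides of the identity.

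The key tool is the following elementary identity, obtained by direct computation from $D_j(f)=(y_j f-y_{j+1}s_j(f))/(y_j-y_{j+1})$: if $G$ is symmetric in $y_j$ and $y_{j+1}$, then
\[
D_j^{(y)}\!\left(\frac{G}{1-x_iy_j}\right)=\frac{G}{(1-x_iy_j)(1-x_iy_{j+1})}.
\]
A completely analogous statement holds on the $x$-side for $D_i^{(x)}$ applied to $H/(1-x_iy_j)$ when $H$ is symmetric in $x_i,x_{i+1}$. Thus applying $D_j^{(y)}$ (resp.\ $D_i^{(x)}$) to the Cauchy product corresponds, under the relevant symmetry hypothesis on the ambient factor, to adjoining a single box to $\Lambda$ on the SE (resp.\ NW) boundary.

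For the SE inductive step, the reading of $\sigma(\Lambda,SE)$ (top-to-bottom rows, right-to-left within each row) gives a building order in which the innermost operator is applied first, i.e.\ bottom-to-top by row and left-to-right within each row. When applying $D_j^{(y)}$ to adjoin $(i,j+1)$ to the current partial shape $\widetilde{\Lambda}$, the ambient factor $G$ (obtained by deleting $(1-x_iy_j)^{-1}$ from $P_{\widetilde{\Lambda}}$) is symmetric in $y_j,y_{j+1}$ precisely because columns $j$ and $j+1$ of $\widetilde{\Lambda}$ coincide except in row $i$; this column-matching property is preserved throughout the induction thanks to the prescribed order. Concurrently, on the right-hand side the newly applied $D_j^{(y)}$ composes to give $s_j\cdot\sigma(\widetilde{\Lambda},SE)=\sigma(\widetilde{\Lambda}\cup\{(i,j+1)\},SE)$ in the Coxeter monoid $\mathfrak{M}_n$, and acts on $\kappa_\mu(y_1,\ldots,y_m)$ via the standard Demazure operator rule~\eqref{D_i(Demazure)}. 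The NW case is analogous after transferring via the Lusztig--Sch\"utzenberger involution $\iota$ of Remark~\ref{Rq_Involution} to the reversed-variable $x$-side, with Demazure operators acting on Demazure atoms as described by Lemma~\ref{Lemma_Delta_i_atoms} and the linearisation~\eqref{D_i(Cbar)}.

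The main obstacle is the combinatorial verification that the column-matching (resp.\ row-matching) condition is maintained throughout the induction. Concretely, one must show that after processing all SE rows strictly below row $i$ and all SE boxes of row $i$ strictly to the left of column $j+1$, columns $j$ and $j+1$ of the current shape agree on every row other than $i$, with $(i,j)$ present and $(i,j+1)$ absent, so that adjoining $(i,j+1)$ restores column parity and prepares the next step. This reduces to a careful bookkeeping in which contributions to each column come from $\rho_\Lambda$ and from previously adjoined SE boxes in lower rows; one checks that the specific reading encoded in $\sigma(\Lambda,SE)$ is exactly the one realising this single-cell asymmetry at every step, and the NW analog is handled symmetrically.
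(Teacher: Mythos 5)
The paper does not actually prove Theorem~\ref{Th_ASC}: it is quoted from Lascoux's \emph{Double Crystal Graphs} (and \cite{AO2}), and the text explicitly describes these extended staircase formulas as having been ``obtained just by computations on polynomials.'' Your proposal is precisely that computation-on-polynomials route — induction on the cells of $\Lambda\setminus\rho_\Lambda$, with each cell adjoined by one divided difference — so it is consistent with the original proof rather than a new one. The skeleton is sound: the base case is indeed the staircase identity of Theorem~\ref{Th_LKO} in the relabelled alphabets $x_n,\ldots,x_{n-m+1}$ and $y_1,\ldots,y_m$ (both readings are empty there, independently of the choice of $b$); your key identity $D_j^{(y)}\bigl(G/(1-x_iy_j)\bigr)=G/\bigl((1-x_iy_j)(1-x_iy_{j+1})\bigr)$ for $G$ symmetric in $y_j,y_{j+1}$ is a correct one-line computation; and since $D_{\sigma(\Lambda,SE)}$ and $D_{\sigma(\Lambda,NW)}$ act on disjoint variable sets and are applied by definition as the compositions $D_{j_1}\cdots D_{j_b}$ and $D_{i_1}\cdots D_{i_a}$, the right-hand side needs no separate argument beyond linearity — your discussion of how the operators recompose in the Coxeter monoid, and the appeal to $\iota$, Lemma~\ref{Lemma_Delta_i_atoms} and \eqref{D_i(Cbar)}, is unnecessary for this character-level identity (those ingredients belong to the crystal lift in Theorem~\ref{Th_RSK_ASC}). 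Note also that vanishing of some $D_i(\overline{\kappa}_\mu)$ causes no harm, as one is applying a linear operator to both sides of a valid identity.

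The one substantive gap is the point you yourself flag: the invariant that, at each step of the order dictated by $\sigma(\Lambda,SE)$ (rows bottom-to-top, cells left-to-right within a row) and by $\sigma(\Lambda,NW)$ (the dual statement for columns), the two relevant columns (resp.\ rows) of the current shape differ in exactly the single cell being doubled. This is the entire combinatorial content of the theorem — it is what makes these particular reduced words, and the diagonal cut through $(i_0,j_0)$, the correct ones — and ``careful bookkeeping'' is not a proof of it. It is true and not hard (one checks that after processing all lower SE rows, column $j$ of the current shape consists of the staircase contribution together with the already-adjoined SE cells, and that the diagonal condition $j-i>j_0-i_0$ guarantees the cell immediately to the left is always present), but as written your argument establishes the identity only modulo this verification. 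I would also ask you to be explicit that the intermediate shapes need not be partitions (which is harmless, since the induction is on arbitrary finite sets of cells) and that the NW and SE steps genuinely commute, so the two inductions can be run independently.
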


\begin{remark}
\label{Rq_Skew} \

\begin{enumerate}
\item By setting $(\mu_{1},\ldots,\mu_{m})=\sigma\lambda$ with $\sigma
\in\mathfrak{S}_{m}$ and $\lambda\in\mathcal{P}_{m}$, we get by
(\ref{CharInvolution})%
\[
\overline{\mathrm{\kappa}}_{(\mu_{m},\ldots,\mu_{1})}(x_{n},\ldots
,x_{n-m+1})=\mathrm{char}\left(  \mathrm{\iota}\left(  \overline{\mathrm{B}%
}_{(\mu_{m},\ldots,\mu_{1})}\right)  \right)  =\mathrm{char}\left(
\overline{\mathrm{B}}^{(\mu_{1},\ldots,\mu_{m})}\right)  .
\]

\item Observe that both decompositions $s_{i_{1}}\cdots s_{i_{a}}$ and
$s_{j_{1}}\cdots s_{j_{b}}$ of $\sigma(\Lambda,NW)$ and $\sigma(\Lambda,SE)$
are reduced.
\end{enumerate}
\end{remark}

By using the operators $\Delta_{i}$ on Demazure crystals, one can now deduce
from this identity of characters an analogue of Theorem \ref{Th_LKO} for the
augmented staircases.

\begin{theorem}
\label{Th_RSK_ASC}With the previously introduced notation, the restriction of
the RSK correspondence $\psi$ to $\mathcal{M}_{n,n}^{D_{\Lambda}}$ gives a
one-to-one correspondence%
\begin{equation}
\psi:\mathcal{M}_{n,n}^{D_{\Lambda}}\rightarrow%
%TCIMACRO{\tbigsqcup \limits_{(\mu_{1},\ldots,\mu_{m})\in\mathbb{Z}_{\geq0}%
%^{m}}}%
%BeginExpansion
{\textstyle\bigsqcup\limits_{(\mu_{1},\ldots,\mu_{m})\in\mathbb{Z}_{\geq0}%
^{m}}}
%EndExpansion
\mathrm{\iota}\left(  \dot{\Delta}_{\sigma(\Lambda,NW)}%
(\overline{\mathrm{B}}_{(\mu_{m},\ldots,\mu_{1})})\right)  \times
\Delta_{\sigma(\Lambda,SE)}\left(  \mathrm{B}_{(\mu_{1},\ldots,\mu_{m}%
)}\right)  \label{Image}%
\end{equation}
where $\Delta_{\sigma(\Lambda,SE)}=\Delta_{j_{1}}\cdots\Delta_{j_{b}}$ and
$\dot{\Delta}_{\sigma(\Lambda,NW)}=\dot{\Delta}_{i_{1}%
}\cdots\dot{\Delta}_{i_{a}}$.\footnote{It follows from the definition of
$\dot{\Delta}$ that product sets of the form $\emptyset\times U$ can appear in
the right {hand} side of (\ref{Image}) and then $\emptyset\times U$
$=\emptyset$ as usual.}
\end{theorem}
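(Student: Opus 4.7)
The plan is to combine the character identity of Theorem~\ref{Th_ASC} with the staircase case, Theorem~\ref{Th_LKO}, and a direct analysis of how the bicrystal structure of $\mathcal{M}_{n,n}$ (Remark~\ref{Rq_doublecrystal}) interacts with the support constraint $A \in \mathcal{M}_{n,n}^{D_\Lambda}$. Since $\psi$ is globally injective and weight-preserving by Theorem~\ref{Th_RSK}, it suffices to prove that $\psi$ maps $\mathcal{M}_{n,n}^{D_\Lambda}$ into the right-hand side of~\eqref{Image} and that the total character of that right-hand side equals $\prod_{(i,j)\in\Lambda}(1-x_iy_j)^{-1}=\sum_{A\in\mathcal{M}_{n,n}^{D_\Lambda}}(xy)^A$. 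The character equality follows from Theorem~\ref{Th_ASC} combined with Lemma~\ref{Lemma_Delta_i_atoms}, formula~\eqref{Delta_iCharacter}, and the definition~\eqref{DeltaDot} of $\dot\Delta$, which ensure that the character of each summand on the right matches the corresponding summand in the character identity.

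For the inclusion, I would reduce to the staircase case. By Remark~\ref{Rq_doublecrystal}, the operator $\hat f_j$ transfers a unit of mass from column $j$ to column $j+1$ of a matrix, and $\tilde f_i$ transfers it from row $i$ to row $i+1$. Given $A\in\mathcal{M}_{n,n}^{D_\Lambda}$, one can apply a sequence of $\hat e_j$ and $\tilde e_i$ operators in an order dictated by the readings of the SE and NW parts of $\Lambda$ so as to ``straighten'' the support of $A$ back into the largest staircase $D_{\rho_\Lambda}$ contained in $D_\Lambda$. Theorem~\ref{Th_LKO} (rewritten as in Remark~\ref{Rq_rewriteLKO}) identifies $\psi$ of the straightened matrix as a pair $(P',Q')$ with $\iota(P')\in\overline{\mathrm{B}}_{(\mu_m,\ldots,\mu_1)}$ and $Q'\in\mathrm{B}_{(\mu_1,\ldots,\mu_m)}$ for some $\mu\in\mathbb{Z}_{\geq 0}^m$. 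Undoing the straightening then gives $Q(A)\in\Delta_{\sigma(\Lambda,SE)}(\mathrm{B}_{(\mu_1,\ldots,\mu_m)})$, because reapplying the $\hat f_j$-type operators produces precisely the set of $\tilde f_j$-descendants that $\Delta$ encodes; and it gives $\iota(P(A))\in\dot\Delta_{\sigma(\Lambda,NW)}(\overline{\mathrm{B}}_{(\mu_m,\ldots,\mu_1)})$ once the Lusztig--Sch\"utzenberger involution converts $\tilde e$-operators on $P(A)$ into $\tilde f$-operators on $\iota(P(A))$. The disjointness of the union indexed by $\mu$ on the right of~\eqref{Image} is guaranteed by Lemma~\ref{Lemma_Thomas}~(2), which via the linearization on $\bigoplus_{\sigma\in\mathfrak{S}_n^\lambda}\mathbb{Z}\bar c_{\sigma\lambda}$ ensures that distinct starting atoms yield disjoint images under the composite $\dot\Delta_{\sigma(\Lambda,NW)}$.

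The main obstacle is the combinatorial bookkeeping relating the specific labeling conventions used to define $\sigma(\Lambda,SE)$ (entries $j-1$ in the SE part, read row-by-row from top to bottom with each row read right-to-left) and $\sigma(\Lambda,NW)$ (entries $n-i$ in the NW part, read column-by-column from right to left with each column read top-to-bottom) to the order in which the straightening operators are applied. The braid and idempotency relations satisfied by the $\Delta_i$ (inherited from those of the Demazure operators $D_i$) ensure that the resulting composite $\Delta_{\sigma(\Lambda,SE)}$ does not depend on the chosen reduced decomposition, but one must still verify that the labels $j-1$ and $n-i$ correspond to the correct indices of the crystal operators. A further subtlety is the need for $\dot\Delta$ rather than $\Delta$ on the NW side, forced by Remark~\ref{Remark_Delta_i_mu}~(2): after the involution one is tracking Demazure atoms instead of Demazure crystals, and only $\dot\Delta_i$ faithfully reproduces the action of $D_i$ on atom characters (whereas $\Delta_i$ fails when $\mu_i<\mu_{i+1}$).
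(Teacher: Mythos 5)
Your overall architecture --- disjointness of the right-hand side via Lemma~\ref{Lemma_Thomas}, a set inclusion obtained by relating $\mathcal{M}_{n,n}^{D_\Lambda}$ to the staircase case through the bicrystal operators of Remark~\ref{Rq_doublecrystal}, and a final character count using Theorem~\ref{Th_ASC} --- matches the paper's. But you run the inclusion in the hard direction, and that step has a genuine gap. You assert that any $A\in\mathcal{M}_{n,n}^{D_\Lambda}$ can be ``straightened'' into $\mathcal{M}_{n,n}^{D_{\rho_\Lambda}}$ by applying raising operators in an order dictated by the readings of the NW and SE parts. This cannot be taken for granted: the operators $\hat e_j$, $\tilde e_i$, $\tilde f_i$ are partial maps whose effect on a matrix is governed by the signature rule on the reading word of $L_A$, so you do not get to choose which entry is decremented, and it is not clear --- indeed it is essentially the content of the theorem --- that the mass sitting outside the staircase can always be pushed back into it by the prescribed operators while recording exactly the membership in $\Delta_{\sigma(\Lambda,SE)}(\mathrm{B}_\mu)$ and $\dot\Delta_{\sigma(\Lambda,NW)}(\overline{\mathrm{B}}_{\sigma_0\mu})$. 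As written, ``one can straighten the support of $A$ back into the largest staircase'' begs the question. (There is also a sign slip: to empty the NW part, which lies in rows \emph{above} the staircase, you would need $\tilde f_i$ rather than $\tilde e_i$, since by Remark~\ref{Rq_doublecrystal} it is $\tilde f_i$ that moves a unit from row $i$ down to row $i+1$.)

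The paper avoids this entirely by proving the inclusion in the opposite, easy direction: since $\Delta_i$ and $\dot\Delta_i$ are by definition sets of $\tilde f_i$-descendants, the preimage $\psi^{-1}(\mathcal{I})$ of the right-hand side is exactly the image of $\mathcal{M}_{n,n}^{D_{\rho_\Lambda}}$ under compositions $\hat f_{j_1}^{k_1}\cdots \hat f_{j_b}^{k_b}$ and $\tilde e_{i_1}^{l_1}\cdots\tilde e_{i_a}^{l_a}$, and each such application demonstrably keeps the support inside $D_\Lambda$ by Remark~\ref{Rq_doublecrystal}. This gives $\psi^{-1}(\mathcal{I})\subseteq\mathcal{M}_{n,n}^{D_\Lambda}$ with no combinatorial difficulty, and the character identity of Theorem~\ref{Th_ASC} then forces equality. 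If you reverse the direction of your inclusion in this way, the remainder of your argument --- the disjointness via Lemma~\ref{Lemma_Thomas}, the character computation via \eqref{Delta_iCharacter}, \eqref{DeltaDot} and Remark~\ref{Rq_Skew}, and the correctly identified need for $\dot\Delta$ on the atom side --- is sound and coincides with the paper's proof.
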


\begin{proof}
First we need to prove that the right hand side $\mathcal{I}$ of (\ref{Image})
is indeed a disjoint union.\ To this end, first observe that for any $\nu
\in\mathbb{Z}_{\geq0}^{m}$, we have
\[
\dot{\Delta}_{\sigma(\Lambda,NW)}(\overline{\mathrm{B}}_{\nu
})=\emptyset\Longleftrightarrow D_{\sigma(\Lambda,NW)}(\overline{\mathrm{\kappa}}_{\nu})=0.
\]
When $ \dot{\Delta}_{\sigma(\Lambda,NW)}(\overline{\mathrm{B}%
}_{\nu})\neq\emptyset$, by point~(1) of Lemma \ref{Lemma_Thomas} we get the
existence of a set $A_{\nu}\subset\mathfrak{S}_{n}\lambda$ such that
\[
D_{\sigma(\Lambda,NW)}(\overline{\mathrm{\kappa}}_{\nu}%
)=\sum_{\delta\in A_{\nu}}\overline{\mathrm{\kappa}}_{\delta}\text{ and hence }%
\dot{\Delta}_{\sigma(\Lambda,NW)}(\overline{\mathrm{B}}_{\nu
})=%
%TCIMACRO{\tbigsqcup \limits_{\delta\in A_{\nu}}}%
%BeginExpansion
{\textstyle\bigsqcup\limits_{\delta\in A_{\nu}}}
%EndExpansion
\overline{\mathrm{B}}_{\delta}.
\]
Now by point~(2) of Lemma~\ref{Lemma_Thomas}, we must have $A_{\nu}\cap
A_{\nu^{\prime}}=\emptyset$ for any $\nu^{\prime}\in\mathbb{Z}^{m}$ distinct
from $\nu$. Observe also that $\Delta_{\sigma(\Lambda,SE)}\left(  \mathrm{B}_{(\mu_{1},\ldots,\mu_{m})}\right)$ is a Demazure crystal by Lemma \ref{Lemma_Delta_i_atoms}.\ We also get that
\[%
%TCIMACRO{\tbigsqcup \limits_{\nu\in\mathbb{Z}_{\geq0}^{m}}}%
%BeginExpansion
 {\textstyle\bigsqcup\limits_{(\mu_{1},\ldots,\mu_{m})\in\mathbb{Z}_{\geq0}^{m}}}
%EndExpansion
\dot{\Delta}_{\sigma(\Lambda,NW)}(\overline{\mathrm{B}}_{(\mu_{m},\ldots,\mu_{1})})
\]
is a disjoint union of atoms. % because $\mathrm{\iota}$ is a crystal involution.}
This permits to conclude that the set
\begin{equation}
\mathcal{I}\subset%
%TCIMACRO{\tbigsqcup \limits_{\lambda\in\mathcal{P}_{n}}}%
%BeginExpansion
{\textstyle\bigsqcup\limits_{\lambda\in\mathcal{P}_{n}}}
%EndExpansion
B(\lambda)\times B(\lambda)=\psi(\mathcal{M}_{n,n}) \label{Inclusion}%
\end{equation}
is indeed a disjoint union composed of Cartesian products sets of an opposite
atom and a Demazure crystal which all lie in $\psi(\mathcal{M}_{n,n})$. Indeed, the Cartesian products sets so obtained from distint sequences $(\mu_{1},\ldots,\mu_{m})$ cannot intersect.
%\begin{thomas} I am not sure to understand this last sentence. Of which disjoint union are we talking here ? \end{thomas}

Now, by Theorem \ref{Th_LKO} and its alternative formulation (\ref{OtherDec}),
the RSK correspondence on $\mathcal{M}_{n,n}$ restricts to a bijection%

\[
\psi:\mathcal{M}_{n,n}^{D_{\rho_{\Lambda}}}\rightarrow%
%TCIMACRO{\tbigsqcup \limits_{(\mu_{1},\ldots,\mu_{m})\in\mathbb{Z}^{m}}}%
%BeginExpansion
{\textstyle\bigsqcup\limits_{(\mu_{1},\ldots,\mu_{m})\in\mathbb{Z}^{m}}}
%EndExpansion
\mathrm{\iota}\left(  \overline{\mathrm{B}}_{(\mu_{m},\ldots,\mu_{1})}\right)
\times\mathrm{B}_{(\mu_{1},\ldots,\mu_{m})}.
\]

Then, the pre-image $\psi^{-1}(\mathcal{I})\subset\mathcal{M}_{n,n}$ (which is
well-defined by (\ref{Inclusion})) is obtained as the image of $\mathcal{M}%
_{n,n}^{D_{\rho_{\Lambda}}}$ under compositions of crystal operators of the
form $\hat{f}_{j_{1}}^{k_{1}}\cdots\hat{f}_{j_{b}}^{k_{b}}$ and $\tilde
{e}_{i_{1}}^{l_{1}}\cdots\tilde{e}_{i_{a}}^{l_{a}}$ (because the involution
$\mathrm{\iota}$ changes each $\tilde{f}_{n-i}$ into $\tilde{e}_{i}$). By
Remark \ref{Rq_doublecrystal}, this shows that $\psi^{-1}(\mathcal{I})$ is
contained in $\mathcal{M}_{n,n}^{D_{\rho_{\Lambda}}}$. To get the equality
$\psi^{-1}(\mathcal{I})=\mathcal{M}_{n,n}^{D_{\rho_{\Lambda}}}$, it suffices
to consider the characters of both sets which coincide thanks to Theorem
\ref{Th_ASC}, Equalities (\ref{Delta_iCharacter}) and { Remark \ref{Rq_Skew}}.
\end{proof}

\begin{example}
%%%%%%%%%%%%%%%%%%%%%%%%%%%%%%%%%%%%%%%%%%%%%%%%%%%%%%%%%%%%%%%%%%%%%%%%%%%%%%%%%%%%%%%%%%%%%%%%%%

 Let $ n=8$, and $ \Lambda=(7,4,2,2,2)$. We have  $m=4$,  $\varrho_{\Lambda}=(4,3,2,1)$ and
\[\begin{array}{cccccc}
\Lambda=%
\begin{tabular}
[c]{|l|l|lllll}\cline{1-2}%
$\color{blue}4$ & $\color{blue}4$ &  &  &  &  & \\\cline{1-2}%
$\blacksquare$ & $\color{blue}3$ &  &  &  &  & \\\cline{1-1}\cline{1-2}%
$\blacksquare$ & $\blacksquare$ & $\blacktriangle$ &  &  &  & \\\cline{1-2}%
\cline{1-4}%
$\blacksquare$ & $\blacksquare$ & $\blacksquare$ & \multicolumn{1}{|l}{$\color{magenta}3$} &
\multicolumn{1}{|l}{} &  & \\\hline
$\blacksquare$ & $\blacksquare$ & $\blacksquare$ &
\multicolumn{1}{|l}{$\blacksquare$} & \multicolumn{1}{|l}{$\color{magenta}4$} &
\multicolumn{1}{|l}{$\color{magenta}5$} & \multicolumn{1}{|l|}{$\color{magenta}6$}\\\hline
\end{tabular}
\end{array}
\]
with  $\sigma(\Lambda,NW)=s_{\color{blue}4}s_{\color{blue}3}s_{\color{blue}4}$, $\sigma(\Lambda,SE)=s_{\color{magenta}3}%
s_{\color{magenta}6}s_{\color{magenta}5}s_{\color{magenta}4}$.

Then $\psi$ is the RSK applied to $\mathcal{M}_{8,8}^{D_{(7,4,2,2,2)}}$ the set of $8\times 8$  non negative integer matrices whose positive entries fit the shape $\Lambda=(7,4,2,2,2)$,
\begin{align*}\psi:\mathcal{M}_{8,8}^{D_{(7,4,2,2,2)}}&\rightarrow%
{\textstyle\bigsqcup\limits_{(\mu_{1},\ldots,\mu_{4})\in\mathbb{Z}_{\geq0}%
^{4}}}
%EndExpansion
\mathrm{\iota}\left(  \dot{\Delta}_4 \dot{\Delta}_3 \dot{\Delta}_4 (\overline{\mathrm{B}}_{(\mu_{4},\ldots,\mu_{1})})\right)  \times
{\Delta}_3{\Delta}_6{\Delta}_5{\Delta}_4\left(  \mathrm{B}_{(\mu_{1},\ldots,\mu_{4}%
)}\right)\\
A&\mapsto \psi(A)=(P,Q)
\end{align*}

Let $A\in \mathcal{M}_{8,8}^{D_{(7,4,2,2,2)}}$ be as follows
\begin{align*}\scriptstyle
&A=\begin{pmatrix}
0&0&0&0&0&0&0&0\\
0&0&0&0&0&0&0&0\\
0&0&0&0&0&0&0&0\\
 \textbf{\color{blue}0}&\textbf{\color{blue}1}&0&0&0&0&0&0\\
\textbf{1}&\textbf{\color{blue}1}&0&0&0&0&0&0\\
\textbf{0}&\textbf{0}&0&0&0&0&0&0\\
\textbf{2}&\textbf{0}&\textbf{1}&\textbf{\color{magenta}1}&0&0&0&0\\
\textbf{0}&\textbf{0}&\textbf{0}&\textbf{0}&\textbf{\color{magenta}1}&\textbf{\color{magenta}0}&\textbf{\color{magenta}2}&0\\
\end{pmatrix}& \mbox{encoded by $577\otimes 45\otimes 7\otimes 7\otimes  8\otimes\emptyset\otimes 88\otimes\emptyset$.}
\end{align*}
{ It is useful to write $K(v):=b_v$ with $v\in \mathfrak{S}_n\lambda$.  For $u,v\in \mathfrak{S}_n\lambda$, the entry-wise comparison $K(u)\le K(v)$  is equivalent to $u\le v$ in $\mathfrak{S}_n\lambda$.}
Then
\begin{align*}
&
P=%
\begin{tabular}
[c]{|l|l|lll}\cline{1-2}%
$8$ & $8$ &&&\\\cline{1-3}
$7$ & $7$ & \multicolumn{1}{|l|}{$8$}&&\\\hline
$4$ & $5$ & \multicolumn{1}{|l|}{$5$}&\multicolumn{1}{|l|}{$7$}&\multicolumn{1}{|l|}{$7$}\\\hline
\end{tabular}
&
K^-(P)=\begin{tabular}
[c]{|l|l|lll}\cline{1-2}%
$8$ & $8$ &&&\\\cline{1-3}
$7$ & $7$ & \multicolumn{1}{|l|}{$8$}&&\\\hline
$4$ & $4$ & \multicolumn{1}{|l|}{$4$}&\multicolumn{1}{|l|}{$4$}&\multicolumn{1}{|l|}{$4$}\\\hline
\end{tabular}
=K(0^3,5,0^2,2,3)\\
&Q=\begin{tabular}
[c]{|l|l|lll}\cline{1-2}%
$5$ & $7$ &&&\\\cline{1-3}
$3$ & $4$ & \multicolumn{1}{|l|}{$7$}&&\\\hline
$1$ & $1$ & \multicolumn{1}{|l|}{$1$}&\multicolumn{1}{|l|}{$2$}&\multicolumn{1}{|l|}{$2$}\\\hline
\end{tabular}
 &K_+(Q)=\begin{tabular}
[c]{|l|l|lll}\cline{1-2}%
$7$ & $7$ &&&\\\cline{1-3}
$4$ & $4$ & \multicolumn{1}{|l|}{$7$}&&\\\hline
$2$ & $2$ & \multicolumn{1}{|l|}{$2$}&\multicolumn{1}{|l|}{$2$}&\multicolumn{1}{|l|}{$2$}\\\hline
\end{tabular}=K(0,5,0,2,0^2,3,0).
 \end{align*}
We show that there exists $\mu=(\mu_{1},\mu_{2},\mu_{3},\mu_{4})\in\mathbb{Z}^4_{\geq0}$ such that  \begin{align*}\psi(A)=(P,Q)\in \iota(\dot{\Delta}_4\dot{\Delta}_3\dot{\Delta}_4%
\overline{\mathrm{B}}_{(\sigma_0\mu,0^4)})\times {\Delta}_3{\Delta}_6{\Delta}_5{\Delta}_4B_{(\mu,0^4)},
\end{align*}
 where  $\sigma_0\in \mathfrak{S}_4$ and $\iota $ is the Sch\"utzenberger involution on tableaux on the alphabet $\{1,2,\dots,8\}$.
% \begin{align*}
%P\in \iota(\dot{\Delta}_4\dot{\Delta}_3\dot{\Delta}_4%
%\overline{\mathrm{B}}_{(\sigma_0\mu,0^4)}),\,\sigma_0\in \mathfrak{S}_4
%\end{align*}
%and
One has
\begin{align*}
&\iota (\dot{\Delta}_4\dot{\Delta}_3\dot{\Delta}_4%
\overline{\mathrm{B}}_{(\mu_{4},\ldots,\mu_{1},0^4)})=\\
&=\begin{cases}\iota\overline{\mathrm{B}}_{(\mu_{4},\mu_3,\mu_2,0,0^4)} \bigsqcup \iota\overline{\mathrm{B}}_{(\mu_{4},\mu_3,0,\mu_2,0^4)}\bigsqcup \iota\overline{\mathrm{B}}_{(\mu_{4},\mu_3,0^2,\mu_2,0^3)},\mbox{ if $\mu_2>\mu_1=0$}\\
\iota\overline{\mathrm{B}}_{(\mu_{4},\mu_3,0,0,0^4)}, \mbox{ if $\mu_1=\mu_2=0$}\\
\iota\overline{\mathrm{B}}_{(\mu_{4},\mu_3,\mu_2,\mu_1,0^4)} \bigsqcup \iota\overline{\mathrm{B}}_{(\mu_{4},\mu_3,\mu_2,0,\mu_1, 0^3)}\bigsqcup \iota\overline{\mathrm{B}}_{(\mu_{4},\mu_3,0,\mu_2,\mu_1,0^3)},\mbox{ if $\mu_1=\mu_2>0$}\\
 \emptyset,\; \mbox{ if  $\mu_1>\mu_2\ge 0$}\\
\iota\overline{\mathrm{B}}_{(\mu_{4},\dots,\mu_{1},0^4)}\bigsqcup \iota\overline{\mathrm{B}}_{(\mu_{4},\mu_3,\mu_1,\mu_{2},0^4)}\bigsqcup  \iota\overline{\mathrm{B}}_{(\mu_{4},\mu_3,\mu_2,0,\mu_{1},0^3)}\bigsqcup \iota\overline{\mathrm{B}}_{(\mu_{4},\mu_3,0,\mu_2,\mu_{1},0^3)}
\bigsqcup \iota\overline{\mathrm{B}}_{(\mu_{4},\mu_3,0,\mu_1,\mu_{2},0^3)}\bigsqcup \\ \bigsqcup\iota\overline{\mathrm{B}}_{(\mu_{4},\mu_3,\mu_1,0,\mu_{2},0^3)}, \mbox{  if $\mu_2>\mu_1>0$.}
\end{cases}
\end{align*}
and
\begin{align*}
      {\Delta}_3{\Delta}_6{\Delta}_5{\Delta}_4B_{(\mu,0^4)}= B_{\pi_3\pi_6\pi_5\pi_4(\mu,0^4)}=B_{(\mu_1,\mu_2,0,\mu_3,0,0,\mu_4,0)}.
      \end{align*}
      Then
      \begin{align*}
      %&Q\in{\Delta}_3{\Delta}_6{\Delta}_5{\Delta}_4B_{(\mu,0^4)} = B_{(\mu_1,\mu_2,0,\mu_3,0,0,\mu_4,0)}
      %\Leftrightarrow K_+(Q)=K(0,5,0,2,0^2,3,0)\le K(\mu_1,\mu_2,0,\mu_3,0,0,\mu_4,0)\\
      %&\Rightarrow \mu_1=0\wedge \mu_4>0\Rightarrow B_{\pi_3\pi_6\pi_5\pi_4(\mu,0^4)}=B_{(0,\mu_2,0,\mu_3,0,0,\mu_4,0)}\\
     % &\Rightarrow \dot{\Delta}_4\dot{\Delta}_3\dot{\Delta}_4
%\overline{\mathrm{B}}_{(\mu_4,\mu_3,0^2,\mu_2,0^3)}\\
& K^-(P)=K(0^3,5,0^2,2,3)\Leftrightarrow
P\in \overline{\mathrm{B}}^{(0^3,5,0^2,2,3)}=
\iota
\overline{\mathrm{B}}_{(3,2,0^2,5,0^3)})\Rightarrow \mu_2=5>\mu_1=0,\,\mu_3=2,\,\mu_4=3\\
&\Rightarrow\mu=(0,5,2,3).
      \end{align*}
Indeed
$K_+(Q)=K(0,5,0,2,0^2,3,0)\le K(0,\mu_2,0,\mu_3,0,0,\mu_4,0)=K(0,5,0,2,0^2,3,0)$ and $$ ~~~~Q\in B_{(0,5,0,2,0^2,3,0)}.$$

Therefore,
\begin{align*}(P,Q)\in \overline{\mathrm{B}}^{(0^3,5,0^2,2,3)}\times \mathrm{B}_{(0,5,0,2,0^2,3,0)}\Rightarrow(P,Q)\in \iota(\dot{\Delta}_4\dot{\Delta}_3\dot{\Delta}_4%
\overline{\mathrm{B}}_{(3,2,5,0,0^4)})\times {\Delta}_3{\Delta}_6{\Delta}_5{\Delta}_4B_{(0,5,2,3,0^4)}.
\end{align*}

Given \[
R=%
\begin{tabular}
[c]{|l|l|lll}\cline{1-2}%
$6$ & $7$ &&&\\\cline{1-3}
$2$ & $4$ & \multicolumn{1}{|l|}{$7$}&&\\\hline
$1$ & $1$ & \multicolumn{1}{|l|}{$2$}&\multicolumn{1}{|l|}{$2$}&\multicolumn{1}{|l|}{$2$}\\\hline
\end{tabular}
\ \ \ \ \
\]
 with $K_+(R)= K(0,5,0,2,0^2,3,0) $ by reverse column Schensted insertion we get the matrix
\begin{align*}\psi^{-1}(P,R)=\begin{pmatrix}
0&0&0&0&0&0&0&0\\
0&0&0&0&0&0&0&0\\
0&0&0&0&0&0&0&0\\
 \textbf{\color{blue}1}&\textbf{\color{blue}0}&0&0&0&0&0&0\\
\textbf{1}&\textbf{\color{blue}1}&0&0&0&0&0&0\\
\textbf{0}&\textbf{0}&0&0&0&0&0&0\\
\textbf{0}&\textbf{3}&\textbf{1}&\textbf{\color{magenta}1}&0&0&0&0\\
\textbf{0}&\textbf{0}&\textbf{0}&\textbf{0}&\textbf{\color{magenta}1}&\textbf{\color{magenta}0}&\textbf{\color{magenta}2}&0\\
\end{pmatrix}\in\mathcal{M}_{8,8}^{D_{(7,4,2,2,2)}}.
\end{align*}
%\end{enumerate}

\end{example}
\subsection{The southeast approach for $\tilde\mu$}

\label{SubsecSEmutilde}We now resume the notation of \S \ \ref{Subsec_ASC} and
in particular consider integers $p$ and $q$ such that $1\leq p\leq q\leq n$
and $n-q+1\leq p$ to perform an augmentation in the SE part of the staircase
$\rho=(p,p-1,\ldots,1)$ as an alternative way to describe the truncated staircase
from \S \ \ref{Subsec_truncated}. As illustrated by the figure below, the
element $\sigma(\Lambda(p,q),SE)\in\mathfrak{S}_{q}$ {is obtained from} {top
to bottom row reading} of the $SE$ part of the augmented staircase, each row
being {read} from right to left. We thus get the following reduced decomposition
in $\mathfrak{S}_{q}$:
\begin{equation}
\sigma(\Lambda(p,q),SE)=\prod_{i=1}^{p-(n-q)-1}(s_{i+n-p-1}\dots s_{i}%
)\prod_{i=0}^{n-q}(s_{q-1}\dots s_{p-(n-q)+i}).\label{SE}%
\end{equation}
%\begin{thomas} The upper bound in the first product should be $p-(n-q)-1$, I think. \end{thomas}
%\begin{olga} Corrected. \end{olga}

\begin{figure}[h]
\begin{center}
\begin{tikzpicture}[scale=0.68]
			\filldraw[color=green!15] (-1,0) rectangle (8,1);
			\filldraw[color=green!15] (-1,1) rectangle (8,2);
			\filldraw[color=green!15] (-1,2) rectangle (8,3);
			\filldraw[color=green!15] (-1,3) rectangle (7,4);
			\filldraw[color=green!15] (-1,4) rectangle (6,5);
			\draw[line width=1pt] (-1,0) rectangle (8,5); \draw[line width=1pt] (0,0)
			-- (0,5);
			\draw[line width= 1pt]
			(1,0) -- (1,5);
			\draw[line width= 1pt] (2,0) -- (2,5);
			\draw[line width= 1pt] (3,0) -- (3,5); \draw[line width= 1pt] (4,0)
			-- (4,5);\draw[line width= 1pt] (5,0) -- (5,5);\draw[line width=
			1pt] (6,0) -- (6,5);\draw[line width=
			1pt] (7,0) -- (7,5);
			\draw[line width= 1pt] (-1,1)-- (8,1);
			\draw[line width= 1pt] (-1,2)-- (8,2);
			\draw[line width= 1pt] (-1,3)--(8,3);
			\draw[line width= 1pt] (-1,4)--(8,4);
			\draw[line width= 2 pt, red ] (3.97,0)--(3.97,1);
			\draw[line width= 2 pt, red ] (4,1)--(3,1);
			\draw[line width= 2 pt, red ] (3,1)--(3,2);
			\draw[line width= 2 pt, red ] (3,2)--(2,2);
			\draw[line width= 2 pt, red ] (2,2)--(2,3);
			\draw[line width= 2 pt, red ] (2,3)--(1,3);
			\draw[line width= 2 pt, red ] (1,3)--(1,4);
			\draw[line width= 2 pt, red ] (1,3.95)--(0,3.95);
			\draw[line width= 2 pt, red ] (0,3.95)--(0,4.95);
			\draw[line width= 2 pt, red ] (-1,4.95)--(0,4.95);
			\draw[line width= 2 pt, green ] (8,0)--(8,3);
			\draw[line width= 2 pt, green ] (8,3)--(7,3);
			\draw[line width= 2
			pt, green ] (7,3)--(7,4); \draw[line width= 2 pt, green ]
			(6,4.03)--(7,4.03);\draw[line width= 2 pt, green ]
			(-1,5.03)--(6,5.03);\draw[line width= 2 pt, green ]
			(6,4.03)--(6,5.03); \footnotesize
			%\filldraw[color=blue!25] (4.1,0) rectangle (5,1);
			\node at (0.5, 4.5) {$1$}; \node at (1.5, 4.5) {$2$};
			\node at (1.5, 3.5) {$\ddots$};  \node at (6.5,
			3.5) {$\ddots$};\node at (5.5,
			4.5) {n-p};  \node at (3.5, 1.5) {$\ddots$};
			\node at (3.5,
			2.5) { $\ddots$};  \node at (2.5, 4.5) { $\ddots$};
			\node at (2.5, 2.7) { p-};  \node at (2.5, 2.3) {\scriptsize n+q};  \node
			at (4.5, 0.5) { p};
			\node at (5.5, 0.5) {$\ldots$};
			\node at (7.5, 0.5) { q-1};\node at (7.5, 1.5) {$\vdots$};\node
			at (7.5,
			2.5) { q-1};
			\draw[arrows=<->,line width=1 pt] (-0.8,-0.5)--(8.0,-0.5); \node at
			(4,-1){$q$};
			\draw[arrows=<->,line width=1 pt] (-1.5,0.2)--(-1.5,5.0); \node at
			(-2,2.5){$p$};
		\end{tikzpicture}
\end{center}
\caption{The labels in $\Lambda(p,q)/\rho$, $\Lambda(p,q)=(q^{n-q+1}%
,q-1,\dots,,n-p+1)$, $\rho=(p,\dots,1)$ the maximal staircase contained in
$\Lambda$, indicate the column index of $\Lambda$ minus one. The reading word,
from right to left and from the top to bottom, defines the reduced word
${\sigma(\Lambda(p,q),SE)}$.}%
\label{fig2:word}%
\end{figure}
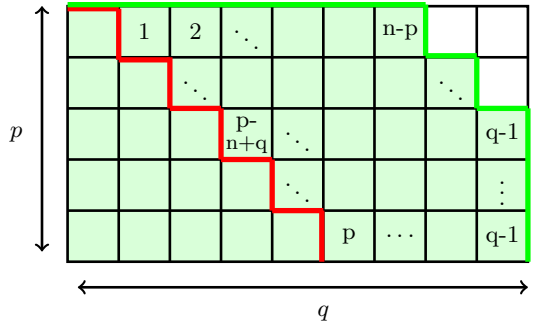

{Resuming the notation of Definition \ref{Def_mu_tilde}, let $\mu=(\mu
_{1},\ldots,\mu_{p})\in\mathbb{Z}_{\geq0}^{p}$ and $\lambda\in\mathcal{P}_{p}$
such that $\mu=\tau\lambda$, $\tau\in\mathfrak{S}_{p}^{\lambda}$, with $1\leq
p\leq q\leq n$ and $p-(n-q)\geq1\Leftrightarrow q\geq n-p+1$. Let
$\widehat{\sigma_{0}\tau}\in\mathfrak{S}_{p}^{\lambda}$ such that
$\widehat{\sigma_{0}\tau}\lambda=\sigma_{0}^{[p]}\mu$ with $\sigma_{0}^{[p]}$
the longest element of $\mathfrak{S}_{p}$ (also recall that }$\sigma_{0}$ is
the longest element of $\mathfrak{S}_{n}$). We build on \cite[Proposition 3]{AE} to show the
following proposition.

\begin{proposition}
\label{Prop_Olga}The element $\tilde{\mu}$ introduced in Definition~\ref{Def_mu_tilde} satisfies%
\[
\tilde{\mu}=(\sigma_{0}\tau)^{I_{q}}(\lambda,0^{n-p})=\pi_{{\sigma
(\Lambda(p,q),SE)}}\pi_{{\widehat{\sigma_{0}\tau}}}(\lambda,0^{n-p}%
)=\pi_{{\sigma(\Lambda(p,q),SE)}}(\sigma_{0}^{[p]}\mu,0^{n-p})
\]
where $\sigma(\Lambda(p,q),SE)\in\mathfrak{S}_{q}$ is defined as in
\eqref{SE}. Equivalently, $\pi_{(\sigma_{0}\tau)^{I_{q}}}$ and $\pi
_{\sigma(\Lambda(p,q),SE)}\pi_{\widehat{\sigma_{0}\tau}}$ have the same action on $(\lambda,0^{n-p})$ and therefore by Lemma~\ref{lem_cox_mon_1}~(2), and Lemma~ \ref{lem_cox_mon_algo} they correspond to the same minimal
representative in $\mathfrak{S}_{n}^{(\lambda,0^{n-p})}$.
\end{proposition}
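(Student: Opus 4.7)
The strategy is to reduce the identity to a single combinatorial matching of reduced decompositions. First, since $(\lambda,0^{n-p})\in\mathcal{P}_n$ is a partition, Lemma~\ref{lem_cox_mon_1}(2) lets me replace group actions with bubble-sort actions freely on both sides: the left hand side becomes $\pi_{(\sigma_0\tau)^{I_q}}(\lambda,0^{n-p})=(\sigma_0\tau)^{I_q}\!\cdot\!(\lambda,0^{n-p})=\widetilde\mu$, and inside the right hand side $\pi_{\widehat{\sigma_0\tau}}(\lambda,0^{n-p})=\widehat{\sigma_0\tau}\cdot(\lambda,0^{n-p})=(\sigma_0^{[p]}\mu,0^{n-p})$. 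Hence the claim reduces to
\[
\widetilde\mu \;=\; \pi_{\sigma(\Lambda(p,q),SE)}\bigl(\sigma_0^{[p]}\mu,\,0^{n-p}\bigr),
\]
the bubble sort on the right being computed with respect to the specific reduced decomposition~\eqref{SE}.

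Next I would produce a length-additive factorization of $\sigma_0\tau$ compatible with the parabolic restriction $I_q$. The standard parabolic identity $\sigma_0 = u\cdot \sigma_0^{[p]}\cdot w_0$ holds, where $u\in\mathfrak{S}_n$ is the minimal coset representative with one-line $[n{-}p{+}1,\ldots,n,1,\ldots,n{-}p]$ and $w_0$ is the longest element of $\mathfrak{S}_{\{p+1,\ldots,n\}}$. Since $\tau\in\mathfrak{S}_p$ commutes with $w_0$, and $\sigma_0^{[p]}\tau=\widehat{\sigma_0\tau}\cdot v$ with $v\in\mathfrak{S}_\lambda$, I obtain the length-additive factorization $\sigma_0\tau=u\cdot\widehat{\sigma_0\tau}\cdot v\cdot w_0$. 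Concatenating reduced decompositions of each factor yields a reduced word for $\sigma_0\tau$.

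The key combinatorial step is to choose the reduced decomposition of $u$ (there are many of them, related by commutation and braid relations) so that the word obtained by deleting all letters $s_k$ with $k\geq q$ is exactly the word~\eqref{SE} encoding $\sigma(\Lambda(p,q),SE)$; this is where one uses the geometry of Figure~\ref{fig2:word}, the cells of $\Lambda(p,q)/\rho$ being in bijection with the inversions of $u$ that remain inside the parabolic block $I_q$. With such a choice made, Algorithm~\ref{algo_cox_mon} applied to the concatenated reduced decomposition of $\sigma_0\tau$ produces, in $\mathfrak{M}_n$, a word of the shape $\sigma(\Lambda(p,q),SE)\cdot\widehat{\sigma_0\tau}\cdot\underline{v}\cdot\underline{w_0'}$, where $\underline{w_0'}$ denotes the letters of the chosen decomposition of $w_0$ with index $<q$.

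Finally I evaluate the bubble-sort action of this word on $(\lambda,0^{n-p})$ from right to left: the letters of $\underline{w_0'}$ act on positions $p+1,\ldots,q$, all of which carry the entry $0$, so none of them swap and they act trivially; the letters of $\underline{v}$ act trivially because $v\in\mathfrak{S}_\lambda$ stabilises $(\lambda,0^{n-p})$, invoking Lemma~\ref{lem_cox_mon_1}(2); then $\pi_{\widehat{\sigma_0\tau}}$ sends the partition to $(\sigma_0^{[p]}\mu,0^{n-p})$; and finally the remaining letters applied via bubble sort are precisely those of the reduced decomposition~\eqref{SE} of $\sigma(\Lambda(p,q),SE)$. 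This yields the desired equality, and then Lemma~\ref{lem_cox_mon_algo} gives the asserted coincidence of minimal representatives in $\mathfrak{S}_n^{(\lambda,0^{n-p})}$. The hardest part is the combinatorial identification in the third paragraph: matching the chosen reduced decomposition of $u$ with the word~\eqref{SE}. This is exactly the content of \cite[Proposition~3]{AE} in the semi-skyline language, on which the statement explicitly builds, so one can either give a direct proof using the bubble-sort description of a reduced decomposition of $u$ read off from Figure~\ref{fig2:word}, or translate the semi-skyline argument through the dictionary relating semi-skyline augmented fillings to keys in Demazure crystals.
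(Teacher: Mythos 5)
Your plan is correct, and its computational core is the same as the paper's: the reduced decomposition of $u=[n-p+1,\ldots,n,1,\ldots,n-p]$ that your third paragraph requires is exactly $\prod_{i=1}^{p}(s_{i+n-p-1}\cdots s_{i})$, whose $I_q$-restriction is \emph{literally} the word \eqref{SE}; this is the computation the paper carries out in \eqref{EQSE00}--\eqref{whale}, so the step you defer to \cite[Proposition 3]{AE} is precisely the content of the paper's proof and should be written out rather than cited. Where you genuinely diverge --- and in fact simplify matters --- is in the bookkeeping. The paper applies Algorithm~\ref{algo_cox_mon} to the word $\prod_{i}(\pi_{i+n-p-1}\cdots\pi_{i})\pi_{\widehat{\sigma_0\tau}}$, which represents an element that merely has the \emph{same action} as $\sigma_0\tau$ on $(\lambda,0^{n-p})$ without being equal to it, and must then argue separately (via the factorizations through $u_0$ and Remark~\ref{rem_prod_cox_mon}) that the two parabolic restrictions still act identically. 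Your length-additive factorization $\sigma_0\tau=u\cdot\widehat{\sigma_0\tau}\cdot v\cdot w_0$ yields an honest reduced decomposition of $\sigma_0\tau$ itself, so the restricted word represents $(\sigma_0\tau)^{I_q}$ on the nose and that discrepancy never arises; the price is the two extra factors $v$ and $w_0$, which you correctly kill by noting that their bubble-sort actions on $(\lambda,0^{n-p})$ are trivial. Two small points to make explicit in a final write-up: the action of the restricted (generally non-reduced) word agrees with that of $\pi_{(\sigma_0\tau)^{I_q}}$ computed on a reduced word because the operators $\pi_i$ satisfy the defining relations of $\mathfrak{M}_n$ (you use this tacitly), and the claimed reduced word for $u$ together with the verification that deleting the letters of index $\geq q$ produces \eqref{SE} is the one piece of your argument that is currently asserted rather than proved.
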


\begin{proof}
On the one hand we have
%in the Coxeter monoid $\mathfrak{M}_{n}$%
\begin{align}
\sigma_{0}\tau(\lambda,0^{n-p})=\sigma_{0}(\mu,0^{n-p})=(0^{n-p}, \mu_p, \dots, \mu_2, \mu_1) \end{align}

We will show that the product $\prod_{i=1}^{p}(\pi_{i+n-p-1}\cdots\pi_{i})\pi_{\widehat{\sigma_{0}\tau}}$ of bubble sort operators has the same action on $(\lambda, 0^{n-p})$. We have
\begin{align}
%\prod_{i=1}^{p}(\pi_{i+n-p-1}\cdots\pi_{i})\pi_{\widehat{\sigma_{0}\tau}}(\lambda,0^{n-p}) & =
\prod_{i=1}^{p}(\pi_{i+n-p-1}\cdots\pi_{i})\pi_{\widehat{\sigma_{0}\tau}}(\lambda,0^{q-p},0^{n-q})&=\nonumber\\
&=\prod_{i=1}^{p}(\pi_{i+n-p-1}\cdots\pi_{i})(\sigma_{0}^{[p]}\mu
,0^{q-p},0^{n-q})\label{EQSE00}\\
&  =\prod_{i=1}^{p-(n-q)-1}(\pi_{i+n-p-1}\cdots\pi_{i})\cdot\label{EQSE0}\\
&  \cdot\prod_{i=0}^{n-q}(\pi_{q-1+i}\cdots\pi_{p-(n-q)+i})(\mu_{p},\ldots
,\mu_{n-q+1},\ldots,\mu_{1},0^{q-p},0^{n-q}) \label{EQSE}%
\end{align}
The bubble sort operators in \eqref{EQSE00} act on the weak composition
$(\sigma_{0}^{[p]}\mu,0^{n-p})=(\mu_{p},\ldots,\mu_{1},0^{n-p})$, shifting
$n-p$ times to the right each of the $p$ entries of $\sigma_{0}^{[p]}\mu$.
This is done by shifting $n-p$ times in $\sigma_{0}^{[p]}\mu$, first in
\eqref{EQSE}, the last $n-q+1$ entries and then, in \eqref{EQSE0}, the
remaining first $p-(n-q)-1\geq0$ entries. That is,
%\begin{thomas} I do not see why we need to split the product into two parts to carry out this computation. I think that this separation makes sense further away when we need to calculate $(\sigma_0\tau)^{I_q}$, but not here. \end{thomas}
%\begin{olga}since we have to do it, we do it earlier\end{olga}
\begin{align}
\eqref{EQSE0},\eqref{EQSE}  &=\prod_{i=1}^{p-(n-q)-1}(\pi_{i+n-p-1}\cdots\pi_{i})\cdot\prod_{i=0}^{n-q}(\pi_{q-1+i}\cdots\pi_{p-(n-q)+i})(\mu_{p},\ldots
,\mu_{n-q+1},\ldots,\mu_{1},0^{q-p},0^{n-q})\nonumber \\
& =\prod_{i=1}^{p-(n-q)-1}(\pi_{i+n-p-1}\cdots
\pi_{i})\cdot\nonumber\\
&  \cdot(\pi_{q-1}\cdots\pi_{p-(n-q)})\cdots(\pi_{n-2}\cdots\pi_{p-1})(\pi
_{n-1}\cdots\pi_{p})(\mu_{p},\ldots,\mu_{n-q+1},\ldots,\mu_{1},0^{q-p}%
,0^{n-q})\label{Turtle}\\
&  =\prod_{i=1}^{p-(n-q)-1}(\pi_{i+n-p-1}\cdots\pi_{i})(\mu_{p},\ldots
,\mu_{n-q+2},0^{n-p},\mu_{n-q+1},\ldots,\mu_{1})\nonumber\\
&  =(\pi_{n-p}\cdots\pi_{1})\cdots(\pi_{q-2}\cdots\pi_{p-(n-q)-1})(\mu
_{p},\ldots,\mu_{n-q+2},0^{n-p},\mu_{n-q+1},\ldots,\mu_{1})\nonumber\\
&  =(0^{n-p},\mu_{p},\ldots,\mu_{n-q+2},\mu_{n-q+1},\ldots,\mu_{1}).\nonumber
\end{align}
%We may reduce our  work to the quotient Coxeter monoid $\mathfrak{M}_{n}^{(\lambda,0^{n-p})}$.
The product

\begin{equation}
\prod_{i=1}^{p-(n-q)-1}(\pi_{i+n-p-1}\cdots\pi_{i})\cdot\prod_{i=0}^{n-q}%
(\pi_{q-1+i}\cdots\pi_{p-(n-q)+i})\pi_{\widehat{\sigma_{0}\tau}}
\label{RedExp}%
\end{equation}
%\begin{thomas} Not convinced by the "by construction". \end{thomas}
%\begin{olga} The argument "by construction" is replaced by the explanation added in magenta below.
%\end{olga}
is a reduced decomposition in $\mathfrak{M}_{n}$ of an element from $\mathfrak{S}_n^{(\lambda, 0^{n-p})}$ which acts on
$(\lambda,0^{n-p})$ in the same way as $\sigma_{0}\tau$.
%{\oo Recall that each reduced decomposition of $\sigma
%\in\mathfrak{S}_{n}$ is sent to the same (still reduced) decomposition
%$\boldsymbol{\sigma}\in\mathfrak{M}_{n}$ and   %Note that  each flip $s_j$ defined by the $\pi_j$ in
%$$\prod_{i=1}^{p-(n-q)-1}(s_{i+n-p-1}\cdots
%s_{i})\prod_{i=0}^{n-q}(s_{q-1+i}\cdots s_{p-(n-q)+i})\widehat{\sigma_{0}\tau}$$
%is a reduced expression in $\mathfrak{S}_{n}.$}
%Note that if $v\in \mathbb{Z}^n_{>0}$  is a strictly decreasing vector any flip $s_j$ defined by a $\pi_j$ in the leftmost factor actingon % $\widehat{\sigma_{0}\tau} v$ comprises two consecutive entries $p\le j, j+1$  of vector in hand where the rightmost is strictly smaller. Thus %the flip $s_j$ always satisfy the first  equivalence of Lemma \ref{lem_equiv_mu_weyl}.}

%\begin{align}\pi_{\sigma_{0}}\pi_\tau&\equiv_{\mathfrak{M}_{n}^{(\lambda,0^{n-p})}}\prod_{i=1}^{p-(n-q)-1}(\pi_{i+n-p-1}\cdots \pi_{i}).
%\prod_{i=0}^{n-q}(\pi_{q-1+i}\cdots \pi_{p-(n-q-p)+i})\pi_{\widehat{\sigma_0\tau}}\nonumber
%\end{align}
Therefore the minimal representative of $\sigma_{0}\tau$ in $\mathfrak{S}%
_{n}^{(\lambda,0^{n-p})}$ is the minimal representative of the element $u$ with reduced decomposition in $\mathfrak{S}_{n}$
\[
\prod_{i=1}^{p-(n-q)-1}(s_{i+n-p-1}\cdots s_{i})\cdot\prod_{i=0}^{n-q}%
(s_{q-1+i}\cdots s_{p-(n-q)+i})\widehat{\sigma_{0}\tau}%
\]
%\begin{olga} this reduced expression for $u$ is not a minimal representative in general ( it is minimal if $\ell(\lambda)=p$). What we want to say  is that  $\sigma_{0}\tau$ and $u$ have the same minimal representative \end{olga}
and hence
\[
u^{I_{q}}=\left(  \prod_{i=1}^{p-(n-q)-1}(s_{i+n-p-1}\cdots
s_{i})\cdot\prod_{i=0}^{n-q}(s_{q-1+i}\cdots s_{p-(n-q)+i})\widehat{\sigma_{0}%
\tau}\right)  ^{I_{q}},
\]
%\begin{olga} We have to add this explanation immediately below originally by Thomas and now generalized to the case where $u$ is not necessarily a minimal representative.
%\end{olga}
which can be calculated using Algorithm~\ref{algo_cox_mon}. Note that $u^{I_q}$ and $(\sigma_0 \tau)^{I_q}$ may not be equal in $\mathfrak{M}_n$, but they have the same action on $(\lambda, 0^{n-p})$: indeed, if $u_0$ is the common minimal representative in $\mathfrak{S}_{(\lambda, 0^{n-p})},$ the elements $\sigma_0 \tau$ and $u$ can be written in the form $u_0 u_\lambda$  and $u_0 x_\lambda$ with $u_\lambda, x_\lambda\in \mathfrak{S}_\lambda$ and $\ell(\sigma_0\tau)=\ell(u_0)+\ell(u_\lambda)$, $\ell(u)=\ell(u_0)+\ell(x_\lambda)$ respectively. By definition of Algorithm~\ref{algo_cox_mon}, we then have $\boldsymbol{(\sigma_0 \tau)}^{I_q}=\boldsymbol{u_0}^{I_q} \boldsymbol{u_{\lambda}}^{I_q}$ and $\boldsymbol{u}^{I_q}=\boldsymbol{u_0}^{I_q} \boldsymbol{x_{\lambda}}^{I_q}$ (where the product is in $\mathfrak{M}_n$; see also Remark~\ref{rem_prod_cox_mon} from the Appendix). It follows that $(\sigma_0 \tau)^{I_q}$ and $u^{I_q}$ are (now in $\mathfrak{S}_n$) of the form $u_0^{I_q} v$ and $u_0^{I_q} z$ for some $v, z\in \mathfrak{S}_{(\lambda, 0^{n-p})}$ respectively, and the second factors $v$ and $z$ thus have trivial action on $(\lambda, 0^{n-p})$.

Passing to $\mathfrak{M}_{n}$ we have a reduced decomposition
\[\boldsymbol{u}=
\left(  \prod_{i=1}^{p-(n-q)-1}(\pi_{i+n-p-1}\cdots\pi_{i})\cdot\prod_{i=0}%
^{n-q}(\pi_{q-1+i}\cdots\pi_{p-(n-q)+i})\pi_{\widehat{\sigma_{0}\tau}}\right),%
\]
hence the first step of Algorithm~\ref{algo_cox_mon} yields the word
%&\equiv_{\mathfrak{M}_{n}^{(\lambda,0^{n-p})}}%
\begin{align}
\prod_{i=1}^{p-(n-q)-1}(\pi_{i+n-p-1}\cdots\pi_{i})\cdot\left(  \prod
_{i=0}^{n-q}(\pi_{q-1+i}\cdots\pi_{p-(n-q)+i})\right)  ^{I_{q}}\pi
_{\widehat{\sigma_{0}\tau}}\nonumber,\end{align} hence \begin{align} \boldsymbol{u}^{I_q} =\prod_{i=1}^{p-(n-q)-1}(\pi_{i+n-p-1}\cdots\pi_{i})\cdot\prod_{i=0}^{n-q}%
(\pi_{q-1}\cdots\pi_{p-(n-q)+i})\pi_{\widehat{\sigma_{0}\tau}}=\pi
_{\sigma(\Lambda(p,q),SE)}\pi_{\widehat{\sigma_{0}\tau}}. \label{whale}%
\end{align}
Note that we omitted in (\ref{RedExp}) the operators with indices $\geq q$, to obtain $\pi_{\sigma
	(\Lambda(p,q),SE)}$ with $\sigma(\Lambda(p,q),SE)$ the reduced
decomposition in $\mathfrak{S}_{q}$ given in~\eqref{SE}. Hence $(\sigma_{0}%
\tau)^{I_{q}}$ and $\pi_{\sigma(\Lambda(p,q),SE)}\pi_{\widehat{\sigma_{0}\tau
}}$ have the same action on $(\lambda,0^{n-p})$ and the reduced decomposition of
the latter explicitly provides $(\sigma_{0}\tau)^{I_{q}}$ in $\mathfrak{S}%
_{n}^{(\lambda,0^{n-p})}$. This gives the desired result.
\end{proof}

We now give a simple algorithm for computing $\tilde{\mu}=(\sigma_{0}%
\tau)^{I_{q}}(\lambda,0^{n-p})$. Recall that $n-q+1\leq p$.

\begin{theorem}
\label{Th_Olga}With the previous notation, we have
\[
\tilde{\mu}=\pi_{\sigma(\Lambda(p,q),SE)}(\sigma_{0}^{[p]}\mu,0^{n-p}%
)=(0^{q-p},\alpha_{1},\dots,\alpha_{p},0^{n-q})
\]
where $\alpha=(\alpha_{1},\dots,\alpha_{p})\in\mathbb{Z}_{\geq0}^{p}$ is
computed by the following algorithm: for $i$ \emph{running from }$p$\emph{ to
}$1$

\begin{itemize}
\item for $j=i+1,\ldots,p,$ successively ignore in $\sigma_{0}^{[p]}\mu=(\mu_{p},\dots
,\mu_{1})$ the rightmost entry equal to $\alpha_{j}$,
\item set $k_{i}%
=\min\{i,n-q+1\}$,

\item then $\alpha_{i}$ is the maximum element among the remaining rightmost
$k_{i}$ entries of $(\mu_{p},\dots,\mu_{1})$.
\end{itemize}
\end{theorem}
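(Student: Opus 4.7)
The plan is to invoke Proposition~\ref{Prop_Olga}, which gives $\tilde{\mu} = \pi_{\sigma(\Lambda(p,q),SE)}(\mu_p,\mu_{p-1},\ldots,\mu_1,0^{n-p})$, and then to compute this right-hand side explicitly by reading the reduced decomposition~\eqref{SE} as a sequence of $p$ bubble-sort passes. Enumerated in the order they are applied (right-to-left in the written product), the $k$-th pass ($1 \leq k \leq p$) has the form $\pi_{e_k-1}\pi_{e_k-2}\cdots\pi_{s_k}$ with $s_k = p - k + 1$ and $e_k = \min(q, n - k + 1)$. The elementary fact underlying the analysis is that such a pass, when applied across the range $[s_k, e_k]$ to a tuple of the form $(x, 0, \ldots, 0, y_1, \ldots, y_m)$ with $y_1 \leq \cdots \leq y_m$, returns the sorted ascending rearrangement of $\{x, y_1, \ldots, y_m\}$ with zeros placed leftmost. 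Since every operator $\pi_j$ in the decomposition satisfies $j \leq q - 1$, the entries at positions $q+1, \ldots, n$ remain zero throughout, and since the starting positions $s_k$ are strictly decreasing in $k$, the value $\mu_k$ still sits at position $s_k$ at the start of pass $k$.

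I will then prove by induction on $k$ the following invariant describing the state after pass $k$. For $k \leq n - q + 1$ (the ``inner'' passes, $e_k = q$), positions $[s_k, q]$ contain the sorted ascending arrangement of $\{\mu_1, \ldots, \mu_k\}$ padded with $q - p$ zeros on the left, while positions $[1, s_k - 1]$ retain the initial values $\mu_p, \ldots, \mu_{k+1}$. For $k > n - q + 1$ (the ``outer'' passes, $e_k = n - k + 1 < q$), positions $[e_k + 1, q]$ lie outside the range of pass $k$ and of every subsequent pass and already hold, by induction, the values $\alpha_{p - q + e_k + 1}, \ldots, \alpha_p$; meanwhile, positions $[s_k, e_k]$ contain the sorted ascending arrangement of $\{\mu_1, \ldots, \mu_k\} \setminus \{\alpha_{p - q + e_k + 1}, \ldots, \alpha_p\}$ padded with zeros on the left. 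The inductive step follows from the bubble-sort ``insertion'' property above: each new pass injects $\mu_k$ into the existing sorted sequence and, for an outer pass, the current maximum gets ``locked'' at position $e_k$ when the range subsequently shrinks.

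The main obstacle will be checking that the value locked at position $e_k$ at the end of an outer pass $k$ truly coincides with $\alpha_{p - q + e_k}$ as defined by the algorithm. This reduces to a combinatorial matching: the multiset of values surviving in positions $[s_k, e_k]$ is $\{\mu_1, \ldots, \mu_k\} \setminus \{\alpha_{p - q + e_k + 1}, \ldots, \alpha_p\}$, of cardinality $n - q + 1 = k_{p - q + e_k}$, and these are precisely the rightmost $k_{p - q + e_k}$ entries of $(\mu_p, \ldots, \mu_1)$ once the rightmost occurrences of $\alpha_{p - q + e_k + 1}, \ldots, \alpha_p$ have been deleted, which is exactly what the algorithm prescribes. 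Combined with the sorted-ascending structure supplied by the invariant, the algorithm's rule ``$\alpha_i$ is the maximum of those entries'' coincides with the value that the bubble-sort pass sends to position $e_k$. Finally, applying the invariant at $k = p$, where the sorted remainder fills positions $[1, n - p + 1]$ with $(0^{q-p}, \alpha_1, \ldots, \alpha_{n - q + 1})$ (using the fact that $\alpha_1 \leq \cdots \leq \alpha_{n - q + 1}$ follows from the algorithm's definition in the regime $k_i = i$) and the locked block $[n - p + 2, q]$ holds $\alpha_{n - q + 2}, \ldots, \alpha_p$, yields $\tilde{\mu} = (0^{q - p}, \alpha_1, \ldots, \alpha_p, 0^{n - q})$, as claimed.
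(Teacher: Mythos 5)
Your proposal is correct and takes essentially the same route as the paper's proof: both unpack the reduced word \eqref{SE} into successive bubble-sort passes acting on $(\mu_p,\dots,\mu_1,0^{n-p})$, observe that each pass is an insertion of one new entry $\mu_k$ into an already sorted segment, and note that as the upper end of the pass range shrinks from $q$ downward the current maximum gets locked in place, producing $\alpha_p,\alpha_{p-1},\dots$ at positions $q,q-1,\dots$. Your version is in fact somewhat more carefully organized than the paper's (the uniform indexing $s_k=p-k+1$, $e_k=\min(q,n-k+1)$, the explicit loop invariant, and the explicit matching between the surviving multiset in $[s_k,e_k]$ and the algorithm's ``rightmost $k_i$ entries after deleting rightmost occurrences'' are all correct and fill in details the paper leaves implicit).
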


\begin{example}\label{ex_olga_1}
Let $n=6$, $p=4$, $q=5$, $n-q+1=2$,

$(a)$ If $\mu=(2,1,2,3)$ and $\sigma_{0}^{[4]}\mu=(3,2,1,2)=\pi_{3}(3,2,2,1)$ then
$\alpha=(1,3,2,2)$ is obtained as follows: $\alpha_{4}=2$ is the maximum among
the rightmost $\min\{4,2\}=2$ entries of $(3,2,1,2)$, $\alpha_{3}=2$ is the
maximum among the rightmost $\min\{3,2\}=2$ entries of $(3,2,1)$, $\alpha_{2}=3$
is the maximum among the rightmost $\min\{2,2\}=2$ entries of $(3,1)$,
$\alpha_{1}=1$ is the maximum among the rightmost $\min\{1,2\}=1$ entries of
$(1)$.

$(b)$ If $\mu=(1,2,3,2)$ and $\sigma_{0}^{[4]}\mu=(2,3,2,1)=\pi_{1}(3,2,2,1)$ then
$\alpha=(1,2,3,2)$ is given by $\alpha_{4}=2$ is the maximum among the rightmost
$\min\{4,2\}=2$ entries of $(2,3,2,1)$, $\alpha_{3}=3$ is the maximum among the
rightmost $\min\{3,2\}=2$ entries of $(2,3,1)$, $\alpha_{2}=2$ is the maximum
among the rightmost $\min\{2,2\}=2$ entries of $(2,1)$, $\alpha_{1}=1$ is the
maximum among the rightmost $\min\{1,2\}=1$ entries of $(1)$.
\end{example}

\begin{proof}
The bubble sort operators in
\begin{align}
\pi_{\sigma(\Lambda(p,q),SE)}  &  =\prod_{i=1}^{p-(n-q)-1}(\pi_{i+n-p-1}%
\cdots\pi_{i})\cdot\prod_{i=0}^{n-q}(\pi_{q-1}\cdots\pi_{p-(n-q)+i})\nonumber\\
&  =(\pi_{n-p}\cdots\pi_{1})\cdots(\pi_{q-1}\cdots\pi_{p-(n-q)-2})(\pi
_{q-2}\cdots\pi_{p-(n-q)-1})\cdot\label{Shark0}\\
&  \cdot(\pi_{q-1}\cdots\pi_{p-(n-q)})\cdots(\pi_{q-1}\cdots\pi_{p-1})(\pi
_{q-1}\cdots\pi_{p}) \label{Shark}%
\end{align}
act on the weak composition $(\mu_{p},\ldots,\mu_{1},0^{q-p},0^{n-q})$, first
in \eqref{Shark}, shifting $q-p$ times to the right the last $n-q+1$ entries,
$\mu_{n-q+1},\ldots,\mu_{1}$, of $(\mu_{p},\ldots,\mu_{n-q+1},\ldots,\mu_{1}%
)$, and one checks that it sorts them in ascending order $(\mu_{n-q+1},\ldots,\mu_{1})_{\uparrow}%
$, to get%
\begin{align}
(\mu_{p},\ldots,\mu_{n-q+2}, 0^{q-p},(\mu_{n-q+1},\ldots,\mu_{1})_{\uparrow
},0^{n-q}). \label{Snale}%
\end{align}

Let $\alpha_{p}$ be the entry $q$ of \eqref{Snale}. Next, the operators in
\eqref{Shark0} act similarly on the resulting vector \eqref{Snale},
%shifting $n-p$ times to the right the  $p-(n-q+1)$ entries  $\mu_p,\ldots,\mu_{n-q+2}$,
reordering in ascending order $\mu_{n-q+2}$ and $(\mu_{n-q+1},\ldots,\mu
_{1})\setminus\{\alpha_{p}\}$, that is, ignoring the entry $q$, in
$(\mu_{n-q+1},\ldots,\mu_{1})_{\uparrow}$, \eqref{Snale}, to get the vector%
\begin{align}
(\mu_{p},\ldots,\mu_{n-q+3}, 0^{q-p},(\mu_{n-q+2},(\mu_{n-q+1},\ldots,\mu
_{1})\setminus\{\alpha_{p}\})_{\uparrow},\alpha_{p},0^{n-q}). \label{Snale1}%
\end{align}
Let $\alpha_{p-1}$ be the entry $q-1$ of \eqref{Snale1}. Then reordering
$\mu_{n-q+3}$ with the just new previous vector \eqref{Snale1}, ignoring the
entries $q-1$ and $q$, and so on. Observe that after some point, the number of remaining
entries in $\sigma_{0}^{[p]}\mu$ is less than $n-q+1$ and just the $i$
remaining entries are considered.
\end{proof}

Let us give two examples illustrating the notation and the results of
Proposition~\ref{Prop_Olga} and Theorem~\ref{Th_Olga}:

\begin{example}
Let $n=6$, $p=4$, $q=5$ and $\Lambda=(5^{2},4,3)$ where $n-p+1=6-4+1=3<q$ and
$n-q+1=2$,
\[
\Lambda=%
\begin{matrix}
\square & 1 & 2 &  & \\
\square & \square & 2 & 3 & \\
\square & \square & \square & 3 & 4\\
\square & \square & \square & \square & 4
\end{matrix}
\qquad\sigma(\Lambda(4,5),SE)=s_{2}s_{1}\,s_{3}s_{2}\,s_{4}s_{3}\,s_{4},
\]

$(a)$ Let $\mu=(2,1,2,3)=\tau\lambda\in\mathbb{Z}_{\geq0}^{4}$, and
$\lambda=(3,2,2,1)$, $\widehat{\sigma_{0}\tau}\lambda=s_{3}\lambda=(3,2,1,2)=\sigma
_{0}^{[4]}\mu$. Then on the one hand we have
\[
\sigma_{0}\tau(3,2,2,1,0,0)=\sigma_{0}(2,1,2,3,0,0)=\sigma_{0}(\mu,0^{2})=(0,0,3,2,1,2).\] On the other hand, mimicking the proof of Proposition~\ref{Prop_Olga}, we have $\pi_{\widehat{\sigma_{0}\tau}}=\pi_{3}$ and the product of the bubble sort operators $\prod_{i=1}^{p}(\pi_{i+n-p-1}\cdots\pi_{i})\pi_{\widehat{\sigma_{0}\tau}}$ is given in this case by $\pi
_{2}\pi_{1}\pi_{3}\pi_{2}\pi_{4}\pi_{3}\pi_{5}\pi_{4}\pi_{3}$ and we have
\[\pi
_{2}\pi_{1}\pi_{3}\pi_{2}\pi_{4}\pi_{3}\pi_{5}\pi_{4}\pi_{3}(3,2,2,1,0,0)=(0,0,3,2,1,2)=\sigma_0\tau(3,2,2,1,0,0).
\]
The decomposition $s_2 s_1 s_3 s_2 s_4 s_3 s_5 s_4 s_3$ is reduced and lies in $\mathfrak{S}_6^\lambda$. We calculate
\begin{align*}
(\pi_{2}\pi_{1}\pi_{3}\pi_{2}\pi_{4}\pi_{3}\pi_{5}\pi_{4}\pi_{3})^{I_{5}} &=\pi_{2}\pi_{1}\pi_{3}\pi_{2}\pi_{4}\pi_{3}\widehat{\pi_{5}}\pi_{4}\pi_{3}=\pi_{2}\pi_{1}\pi_{3}\pi_{2}\pi_{4}\pi_{3}\pi_{4}\pi_{3}\\ &=\pi_{2}\pi_{1}\pi_{3}\pi_{2}\pi_{4}\pi_{4}\pi_3 \pi_4 =\pi_{2}\pi_{1}\pi_{3}\pi_{2}\pi_{4}\pi_3 \pi_4.
\end{align*}
Note that $(\pi_{2}\pi_{1}\pi_{3}\pi_{2}\pi_{4}\pi_{3}\pi_{5}\pi_{4}%
\pi_{3})^{I_{5}}= \pi_{\sigma(\Lambda(4,5),SE)}$ in this case.
%\begin{align}
%\equiv_{\mathfrak{M}_{6}^{322100}} \pi_{2}\pi_{1}\pi_{3}\pi_{2}\pi_{4}\pi
%_{3}\pi_{4}.\pi_{3}= \pi_{\sigma(\Lambda(4,5),SE)}.\pi_{3}\nonumber\\
%&  =\pi_{2}\pi_{1}\pi_{3}\pi_{2}\pi_{4}\pi_{4}\pi_{3}\pi_{4}=\pi_{2}\pi_{1}%
%\pi_{3}\pi_{2}\pi_{4}\pi_{3}\pi_{4}=\pi_{\sigma(\Lambda(4,5),SE)}.\nonumber
%\end{align}
%In this case, $(\pi_{2}\pi_{1}\pi_{3}\pi_{2}\pi_{4}\pi_{3}\pi_{5}\pi_{4}%
%.\pi_{3})^{I_{5}}= \pi_{\sigma(\Lambda(4,5),SE)}$ in $\mathfrak{M}%
%_{6}^{322100}$.
Now we have
\begin{align}
\pi_{\sigma(\Lambda(4,5),SE)}\pi_{3}(3,2,2,1,0,0)  &  =\pi_{\sigma(\Lambda
(4,5),SE)}(3,2,1,2,0,0)= \pi_{2}\pi_{1}\pi_{3}\pi_{2}\pi_{4}\pi_{3}\pi
_{4}(3,2,1,2,0,0)\nonumber\\
&  =\pi_{2}\pi_{1}\pi_{3}\pi_{2}\pi_{4}\pi_{3}(3,2,1,0,2,0) =\pi_{2}\pi_{1}\pi
_{3}\pi_{2}\pi_{4}(3,2,0,1,2,0)\nonumber \\ &=\pi_{2}\pi_{1}\pi_{3}\pi_{2}(3,2,0,1,2,0)
  =\pi_{2}\pi_{1}\pi_{3}(3,0,2,1,2,0) =\pi_{2}\pi_{1}(3,0,1,2,2,0)\nonumber\\ &=\pi_{2}%
(0,3,1,2,2,0)
 =(0,1,3,2,2,0)=(0,\alpha,0).\nonumber
\end{align}
%\begin{thomas} There were several mistakes in the computation above. I corrected them, please check. \end{thomas}
%\begin{olga}  checked. \end{olga}
Note that $\alpha$ was also computed in part $(a)$ of Example~\ref{ex_olga_1}.

$(b)$ Let $\mu=(1,2,3,2)=\tau\lambda\in\mathbb{Z}^{4}_{\ge}0$, and $\lambda
=(3,2,2,1)$, $\widehat{\sigma_{0}\tau}\lambda=s_{1}\lambda=(2,3,2,1)=\sigma
_{0}^{[4]}\mu$. Then on the one hand we have
\[
\sigma_{0}\tau(3,2,2,1,0,0)=\sigma_{0}(1,2,3,2,0,0)=\sigma_{0}(\mu,0^{2})=(0,0,2,3,2,1).\]
On the other hand, mimicking the proof of Proposition~\ref{Prop_Olga}, we have $\pi_{\widehat{\sigma_{0}\tau}}=\pi_{1}$ and the product of the bubble sort operators $\prod_{i=1}^{p}(\pi_{i+n-p-1}\cdots\pi_{i})\pi_{\widehat{\sigma_{0}\tau}}$ is given in this case by $\pi
_{2}\pi_{1}\pi_{3}\pi_{2}\pi_{4}\pi_{3}\pi_{5}\pi_{4}\pi_{1}$ and we have
\[\pi
_{2}\pi_{1}\pi_{3}\pi_{2}\pi_{4}\pi_{3}\pi_{5}\pi_{4}\pi_{1}(3,2,2,1,0,0)=(0,0,2,3,2,1)=\sigma_0\tau(3,2,2,1,0,0).
\]
The decomposition $s_{2}s_{1}s_{3}s_{2}s_{4}s_{3}s_{5}s_{4}s_{1}$ is reduced and lies in $\mathfrak{S}_6^\lambda$. We calculate
\begin{align}
(\pi_{2}\pi_{1}\pi_{3}\pi_{2}\pi_{4}\pi_{3}\pi_{5}\pi_{4}\pi_{1})^{I_{5}}  &
= \pi_{2}\pi_{1}\pi_{3}\pi_{2}\pi_{4}\pi_{3}\pi_{4}\pi_{1}= \pi
_{\sigma(\Lambda(4,5),SE)}\pi_{1}.\nonumber
\end{align}
Now we have
\[
\pi_{\sigma(\Lambda(4,5),SE)}\pi_{1}(3,2,2,1,0,0)=\pi_{2}\pi_{1}\pi_{3}\pi_{2}%
\pi_{4}\pi_{3}\pi_{4}\pi_{1}(3,2,2,1,0,0)=(0,1,2,3,2,0)=(0,\alpha,0).
\]
Note that $\alpha$ was also computed in part $(b)$ of Example~\ref{ex_olga_1}.
\end{example}
%\begin{olga}
%In case of interest, we may write, for $1\leq q\leq p$,
%\begin{align}
%\label{lastone}\prod_{%
%\begin{smallmatrix}
%(i,j)\in\Lambda\\
%q\le p
%\end{smallmatrix}
%}(1-x_{i}y_{j})^{-1}  &  = \sum_{\mu\in\mathbb{N}^{q}} \pi_{\sigma
%(\Lambda,NW)}\bar\kappa_{\mu}(x) \kappa_{\sigma_{0}\mu}(y)\nonumber\\
%&  =\sum_{\mu\in\mathbb{N}^{q}} \bar{\kappa}_{\mu}(y)\pi_{\sigma(\Lambda,NW)}
%\kappa_{\sigma_{0}\mu}(x)\nonumber\\
%&  =\sum_{%
%\begin{smallmatrix}
%\mu\in\mathbb{N}^{q}%
%\end{smallmatrix}
%}\kappa_{(0^{p-q},\alpha^{\prime})}(x)\overline{\kappa}_{\mu}(y),\nonumber
%\end{align}
%where $\alpha^{\prime}\in\mathbb{N}^{q}$ is defined similarly, swapping $p$
%with $q$.
%\end{olga}

\section{Last passage percolation in a Young diagram}\label{Secpercolation}

\subsection{LPP on rectangle Young diagrams}

We resume the notation of \S \ \ref{SubsecRSK}.\ Let $u_{1},\ldots,u_{m}$ and
$v_{1},\ldots,v_{m}$ be two sets of real numbers in the interval $[0,1[$ and
consider a family $w_{i,j}$ of independent random variables, with values in
$\mathbb{Z}_{\geq0}$, and such that%
\begin{equation}
\mathbb{P}(w_{i,j}=k)=(1-u_{i}v_{j})(u_{i}v_{j})^{k}\text{ for any }%
k\in\mathbb{Z}_{\geq0}. \label{geometricw(i,j)}%
\end{equation}
In other words, each $w_{i,j}$ follows a geometric distribution of parameter
$u_{i}v_{j}$. We then obtain a random matrix $\mathcal{W}$ with values in
$\mathcal{M}_{m\text{,}n}$ whose entry at position $(i,j)$ is defined as
$w_{i,j}$ for $1\leq i\leq m$ and $1\leq j\leq m$.\ Since the random variables
$w_{i,j}$ are independent, for any $A\in\mathcal{M}_{m\text{,}n}$ we get
\[
\mathbb{P}(\mathcal{W}=A)=\left(  \prod_{1\leq i\leq m,1\leq j\leq n}%
(1-u_{i}v_{j})\right)  (uv)^{A}%
\]
where $(uv)^{A}=\prod_{1\leq i\leq m,1\leq j\leq n}(u_{i}v_{j})^{a_{i,j}}.$

Now consider the paths in the matrices in $\mathcal{M}_{m\text{,}n}$ starting
at entry $(1,n)$ and ending at entry $(m,1)$ with possible steps
$\longleftarrow$ or $\downarrow$. The length of such a path is defined as the
sum of all the entries that it contains. Let us define de map $\mathrm{perc}$
which associates to each matrix $A$ in $\mathcal{M}_{m\text{,}n}$ the maximum
of the length path of all possible aforementioned paths in the matrix $A$. By
Assertion 4 of Theorem \ref{Th_RSK}, the integer $\mathrm{perc}(A)$ coincides
with the longest row of the tableaux $P(A)$ and $Q(A)$. This is the
\textsf{last passage percolation} associated to $A$. We then define the random
variable $G=\mathrm{perc}\circ\mathcal{W}$. Thanks to the above observation
and Theorem \ref{Th_RSK}, it becomes easy to give the law of the random
variable $G$. Set $\Delta_{m,n}=\prod_{1\leq i\leq m,1\leq j\leq n}%
(1-u_{i}v_{j})$. The following theorem was established in \cite{joh}.

\begin{theorem}
For any nonnegative integer $k$, we have
\[
\mathbb{P}(G=k)=\Delta_{m,n}\sum_{\lambda\in\mathcal{P}_{\min(m,n)}\mid
\lambda_{1}=k}s_{\lambda}(u)s_{\lambda}(v).
\]

\end{theorem}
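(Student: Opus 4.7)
The plan is to compute $\mathbb{P}(G=k)$ by summing $\mathbb{P}(\mathcal{W}=A)$ over all matrices $A\in\mathcal{M}_{m,n}$ with $\mathrm{perc}(A)=k$, and then to transport the sum through the RSK bijection $\psi$ to reduce it to a sum of products of Schur polynomials indexed by partitions with first part equal to $k$.

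First I would write, by independence of the entries $w_{i,j}$,
\[
\mathbb{P}(G=k)=\sum_{\substack{A\in\mathcal{M}_{m,n}\\ \mathrm{perc}(A)=k}}\mathbb{P}(\mathcal{W}=A)=\Delta_{m,n}\sum_{\substack{A\in\mathcal{M}_{m,n}\\ \mathrm{perc}(A)=k}}(uv)^{A}.
\]
The key step is then to translate both the constraint on $A$ and the monomial $(uv)^{A}$ under RSK. By Assertion~4 of Theorem~\ref{Th_RSK}, the condition $\mathrm{perc}(A)=k$ is equivalent to saying that the common shape $\lambda$ of $P(A)$ and $Q(A)$ satisfies $\lambda_{1}=k$. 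Moreover, as observed just after the definition of the weight of a matrix, we have $(uv)^{A}=u^{\mathrm{wt}(P(A))}v^{\mathrm{wt}(Q(A))}$. Using the bijection
\[
\psi:\mathcal{M}_{m,n}\longrightarrow\bigsqcup_{\lambda\in\mathcal{P}_{\min(m,n)}}B_{m}(\lambda)\times B_{n}(\lambda),
\]
the sum rewrites as
\[
\sum_{\substack{A\in\mathcal{M}_{m,n}\\ \mathrm{perc}(A)=k}}(uv)^{A}=\sum_{\substack{\lambda\in\mathcal{P}_{\min(m,n)}\\ \lambda_{1}=k}}\sum_{P\in B_{m}(\lambda)}\sum_{Q\in B_{n}(\lambda)}u^{\mathrm{wt}(P)}v^{\mathrm{wt}(Q)}.
\]

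Finally I would factor the inner double sum and apply the tableau expression (\ref{Schur}) of the Schur polynomials to obtain
\[
\sum_{P\in B_{m}(\lambda)}u^{\mathrm{wt}(P)}\cdot\sum_{Q\in B_{n}(\lambda)}v^{\mathrm{wt}(Q)}=s_{\lambda}(u)\,s_{\lambda}(v),
\]
which combined with the previous displays yields the announced formula. There is no real obstacle: the proof is essentially a direct application of the RSK bijection together with the product formula for $\mathbb{P}(\mathcal{W}=A)$ coming from independence and the geometric distribution (\ref{geometricw(i,j)}); the only point requiring care is to make sure the restriction $\lambda\in\mathcal{P}_{\min(m,n)}$ in the target of $\psi$ is preserved when filtering by $\lambda_{1}=k$, which is automatic.
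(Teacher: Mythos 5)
Your proof is correct and follows exactly the route the paper intends: the paper only sketches the argument (citing Johansson and Theorem \ref{Th_RSK}), and your write-up is the straightforward completion of that sketch — sum $\mathbb{P}(\mathcal{W}=A)$ over matrices with $\mathrm{perc}(A)=k$, use Assertion 4 of Theorem \ref{Th_RSK} to convert the constraint into $\lambda_1=k$ on the common shape under $\psi$, and factor the resulting sum into $s_\lambda(u)s_\lambda(v)$ via the weight-preserving property of RSK and the tableau formula (\ref{Schur}) for Schur polynomials. No gaps.
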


In fact the results in \cite{joh} also give a law of large numbers of the
variable $G$ and also a Tracy-Widom renormalization theorem, both of which are
outside the scope of this note.

\subsection{LPP on staircases and non-symmetric Cauchy Kernel}

Thanks to Theorem \ref{Th_LKO}, the non-symmetric Cauchy kernel identity also
yields an interesting last percolation model. This time, we assume $m=n$ and
only consider independent random variables $w_{i,j}$ when $1\leq j\leq i\leq
n$ with geometric distributions as in (\ref{geometricw(i,j)}).\ This defines a
lower random square matrix $\mathcal{L}$ with nonnegative integer entries and
we get%
\[
\mathbb{P}(\mathcal{L}=A)=\prod_{1\leq j\leq i\leq n}(1-u_{i}v_{j})(pq)^{A}.
\]
One can interpret this model as follows. Consider paths from position $(1,n)$
to position $(n,1)$ where only the entries in the lower part of $A$ contribute
to the length of the paths.\ We can then define the random variable
$L=\mathrm{perc}\circ\mathcal{L}$ and try to determine its law. Since
Theorem~\ref{Th_LKO} gives a bijective correspondence obtained as the
restriction to lower triangular matrices of the RSK map defined on
$\mathcal{M}_{n,n}$, the value of $L$ still corresponds to the length of the
largest part of the partitions appearing in the right hand side of
(\ref{StaircaseKernel}). By Remark~\ref{Rq_rewriteLKO}, this yields the
following theorem.

\begin{theorem}
For any nonnegative integer $k$ we have%
\[
\mathbb{P}(L=k)=S_{n}\sum_{\mu\in\mathbb{Z}_{\geq0}^{n}\mid\max(\mu
)=k}\overline{\mathrm{\kappa}}^{\mu}(u)\mathrm{\kappa}_{\mu}(v)=S_{n}\sum_{\mu
\in\mathbb{Z}^{n}\mid\max(\mu)=k}\overline{\mathrm{\kappa}}_{\sigma_{0}(\mu)}%
(u_{n},\ldots,u_{1})\mathrm{\kappa}_{\mu}(v_{1},\ldots,v_{n}),
\]
where
\[
S_{n}=\prod_{1\leq j\leq i\leq n}(1-u_{i}v_{j}).
\]

\end{theorem}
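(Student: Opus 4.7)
The plan is to combine the explicit geometric distribution for the random matrix $\mathcal{L}$ with the bijective content of Theorem~\ref{Th_LKO} (refined in Remark~\ref{Rq_rewriteLKO}), and interpret the length $L$ via Assertion~4 of Theorem~\ref{Th_RSK}. The argument is essentially a reading-off of the weight statistic through the RSK bijection.

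First I would split the event $\{L=k\}$ according to the underlying matrix. Since the entries $w_{i,j}$ are independent and $\mathcal{L}\in\mathcal{M}_{n,n}^{D_{\varrho}}$ almost surely,
\[
\mathbb{P}(L=k)=\sum_{\substack{A\in\mathcal{M}_{n,n}^{D_{\varrho}}\\\mathrm{perc}(A)=k}}\mathbb{P}(\mathcal{L}=A)=S_{n}\sum_{\substack{A\in\mathcal{M}_{n,n}^{D_{\varrho}}\\\mathrm{perc}(A)=k}}(uv)^{A}.
\]

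Next I would transport the sum to the tableau side via RSK. By Theorem~\ref{Th_LKO}, $\psi$ restricts to a bijection $\mathcal{M}_{n,n}^{D_{\varrho}}\to\bigsqcup_{\lambda\in\mathcal{P}_{n}}\bigsqcup_{\sigma\in\mathfrak{S}_{n}^{\lambda}}\overline{\mathrm{B}}^{\sigma}(\lambda)\times\mathrm{B}_{\sigma}(\lambda)$, and from the RSK identity $(uv)^{A}=u^{\mathrm{wt}(P(A))}v^{\mathrm{wt}(Q(A))}$ together with Assertion~4 of Theorem~\ref{Th_RSK} (which states $\mathrm{perc}(A)=\lambda_{1}$, where $\lambda$ is the common shape of $P(A)$ and $Q(A)$), the event $\{\mathrm{perc}(A)=k\}$ corresponds to shapes $\lambda$ with $\lambda_{1}=k$. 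Therefore
\[
\mathbb{P}(L=k)=S_{n}\sum_{\substack{\lambda\in\mathcal{P}_{n}\\\lambda_{1}=k}}\sum_{\sigma\in\mathfrak{S}_{n}^{\lambda}}\sum_{(P,Q)\in\overline{\mathrm{B}}^{\sigma}(\lambda)\times\mathrm{B}_{\sigma}(\lambda)}u^{\mathrm{wt}(P)}v^{\mathrm{wt}(Q)}.
\]

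Then I would reindex using Remark~\ref{Rq_rewriteLKO}, which rewrites the double disjoint union as $\bigsqcup_{\mu\in\mathbb{Z}_{\geq0}^{n}}\iota\!\left(\overline{\mathrm{B}}_{\sigma_{0}\mu}\right)\times\mathrm{B}_{\mu}$ with $\lambda$ the unique partition in the orbit $\mathfrak{S}_{n}\mu$. Under this reindexing, the condition $\lambda_{1}=k$ becomes $\max(\mu)=k$. Using the character identities $\sum_{P\in\iota(\overline{\mathrm{B}}_{\sigma_{0}\mu})}u^{\mathrm{wt}(P)}=\overline{\mathrm{\kappa}}^{\mu}(u)=\overline{\mathrm{\kappa}}_{\sigma_{0}\mu}(u_{n},\ldots,u_{1})$ and $\sum_{Q\in\mathrm{B}_{\mu}}v^{\mathrm{wt}(Q)}=\mathrm{\kappa}_{\mu}(v)$ (cf.\ Theorem~\ref{Th_KL} and item~7 of Remark~\ref{Rq_Involution}), the double sum factors into
\[
\mathbb{P}(L=k)=S_{n}\sum_{\substack{\mu\in\mathbb{Z}_{\geq0}^{n}\\\max(\mu)=k}}\overline{\mathrm{\kappa}}^{\mu}(u)\,\mathrm{\kappa}_{\mu}(v),
\]
which is the claimed formula, and the second equality in the theorem is then just a rewriting via $\overline{\mathrm{\kappa}}^{\mu}(u_{1},\ldots,u_{n})=\overline{\mathrm{\kappa}}_{\sigma_{0}\mu}(u_{n},\ldots,u_{1})$.

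There is no serious obstacle: every ingredient (the bijection, the weight-preservation property $(uv)^{A}=u^{\mathrm{wt}(P(A))}v^{\mathrm{wt}(Q(A))}$, the reformulation of the disjoint union, and the interpretation of $\mathrm{perc}$ as $\lambda_{1}$) is already established in the paper. The only point that needs a small verification is the bookkeeping of the conventions: one checks that the "longest row" statistic on $P(A)$ (equivalently on $Q(A)$), which coincides with the length of the longest decreasing subword of $w(L_{A})$, is indeed invariant under the key decomposition indexed by $\mu$, since it depends only on $\lambda$, and that the indexing orbit representative $\mu$ ranges over $\mathbb{Z}_{\geq 0}^{n}$ with the constraint $\max(\mu)=\lambda_{1}=k$.
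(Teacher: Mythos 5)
Your proposal is correct and follows essentially the same route as the paper: the paper's argument likewise combines the product form of $\mathbb{P}(\mathcal{L}=A)$ with the restricted RSK bijection of Theorem~\ref{Th_LKO}, identifies $\mathrm{perc}(A)$ with the longest row $\lambda_1$ via Assertion~4 of Theorem~\ref{Th_RSK}, and reindexes by Remark~\ref{Rq_rewriteLKO} so that $\lambda_1=k$ becomes $\max(\mu)=k$. Your version merely spells out the intermediate summations more explicitly than the paper does.
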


\subsection{LPP and parabolic restrictions in non-symmetric Cauchy Kernel}

Given $p$ and $q$ as in \S \ \ref{Subsec_representation}, one can similarly
use Theorem \ref{Th_truncatedRectangle} to study the percolation model on
random matrices $\mathcal{T}_{p,q}$ with nonnegative random integer
coefficients having zero entries in each position $(i,j)$ such that $i\leq
n-p$ and $j>q$. Each random variable $w_{i,j}$ with $i\geq n-p+1$ and $j\leq
q$ follows a geometric distribution of parameter $u_{i}v_{j}$. Using the same
arguments as in \S \ \ref{SubsecPR}, we can obtain the law of the random
variable $T_{p,q}=\mathrm{perc}\circ\mathcal{T}_{p,q}$.

\begin{theorem}
For any nonnegative integer $k$, we have%
\[
\mathbb{P}(T_{p,q}=k)=T_{p,q}\sum_{(\mu_{1},\ldots,\mu_{p})\in\mathbb{Z}%
_{\geq0}^{p}\mid\max(\mu)=k}\overline{\mathrm{\kappa}}_{(\mu_{p},\ldots,\mu_{1}%
)}(u_{n},\ldots,u_{n-p+1})\mathrm{\kappa}_{\widetilde{\mu}}(v_{1},\ldots,v_{q}),
\]
where
\[
T_{p,q}=\prod_{(i,j)\in D_{\Lambda(p,q)}}(1-u_{i}v_{j}).
\]

\end{theorem}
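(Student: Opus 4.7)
The plan is to follow the same template as the two previous theorems in this section, now feeding in Theorem~\ref{Th_truncatedRectangle} in place of Theorem~\ref{Th_RSK} and Theorem~\ref{Th_LKO}. First, I would unpack the distribution of $\mathcal{T}_{p,q}$: since the $w_{i,j}$'s are independent with $w_{i,j}\sim\mathrm{Geom}(u_i v_j)$ for $(i,j)\in D_{\Lambda(p,q)}$ and $w_{i,j}=0$ elsewhere, one gets
\[
\mathbb{P}(\mathcal{T}_{p,q}=A)=T_{p,q}\,(uv)^{A}
\qquad\text{for all }A\in\mathcal{M}_{n,n}^{D_{\Lambda(p,q)}},
\]
with $T_{p,q}=\prod_{(i,j)\in D_{\Lambda(p,q)}}(1-u_iv_j)$ as in the statement.

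Next, I would transport the event $\{T_{p,q}=k\}$ through the bijection
\[
\psi:\mathcal{M}_{n,n}^{D_{\Lambda(p,q)}}\longrightarrow
\bigsqcup_{\mu\in\mathbb{Z}_{\geq 0}^{p}}
\iota(\overline{\mathrm{B}}_{p,\mu})\times \mathrm{B}_{q,\widetilde{\mu}}
\]
of Theorem~\ref{Th_truncatedRectangle}. By Assertion~4 of Theorem~\ref{Th_RSK}, $\mathrm{perc}(A)$ equals the length of the first row of $P(A)$ (equivalently $Q(A)$). Now the tableaux appearing in the $\mu$-component of the right hand side all have shape $\lambda$, where $\lambda\in\mathcal{P}_p$ is the unique partition in the $\mathfrak{S}_p$-orbit of $\mu$. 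Consequently the longest row of $P(A)$ and $Q(A)$ is $\lambda_1=\max(\mu_1,\ldots,\mu_p)$, so that
\[
\{T_{p,q}=k\}=\psi^{-1}\Bigl(\bigsqcup_{\mu\in\mathbb{Z}_{\geq 0}^p,\ \max(\mu)=k}
\iota(\overline{\mathrm{B}}_{p,\mu})\times \mathrm{B}_{q,\widetilde{\mu}}\Bigr).
\]

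Since $\psi$ preserves the weight, namely $(uv)^{A}=u^{\mathrm{wt}(P(A))}v^{\mathrm{wt}(Q(A))}$, summing over the fiber $\{T_{p,q}=k\}$ gives
\[
\mathbb{P}(T_{p,q}=k)=T_{p,q}\!\!\sum_{\substack{\mu\in\mathbb{Z}_{\geq 0}^p\\ \max(\mu)=k}}
\Bigl(\sum_{P\in\iota(\overline{\mathrm{B}}_{p,\mu})}u^{\mathrm{wt}(P)}\Bigr)
\Bigl(\sum_{Q\in\mathrm{B}_{q,\widetilde{\mu}}}v^{\mathrm{wt}(Q)}\Bigr).
\]
By \eqref{CharInvolution} the first inner sum is $\overline{\mathrm{\kappa}}_{(\mu_p,\ldots,\mu_1)}(u_n,\ldots,u_{n-p+1})$, while Theorem~\ref{Th_KL}(1) identifies the second inner sum with $\mathrm{\kappa}_{\widetilde{\mu}}(v_1,\ldots,v_q)$. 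This yields exactly the claimed formula.

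No step is genuinely difficult: the whole argument is essentially a bookkeeping exercise assembling results already established in the paper, the only point requiring care being the identification of the first row length of $P(A)$ with $\max(\mu)$, which boils down to remembering that $\lambda$ is the dominant rearrangement of $\mu$ and that the variable $\widetilde{\mu}$ lives in $\mathfrak{S}_n\,(\lambda,0^{n-p})$. The main conceptual obstacle has already been overcome in Theorem~\ref{Th_truncatedRectangle}, namely the non-trivial fact that restricting RSK to $\mathcal{M}_{n,n}^{D_{\Lambda(p,q)}}$ produces exactly the Demazure atoms / Demazure crystals indexed by $\mu$ and $\widetilde{\mu}$.
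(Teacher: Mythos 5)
Your proposal is correct and follows exactly the route the paper intends: the paper gives no separate argument for this theorem beyond invoking "the same arguments" as for the rectangular and staircase cases, and your write-up supplies precisely those steps — independence of the geometric entries, transport of the event through the bijection of Theorem~\ref{Th_truncatedRectangle}, the identification of $\mathrm{perc}(A)$ with $\lambda_1=\max(\mu)$ via Theorem~\ref{Th_RSK}(4), and the weight-preserving summation yielding the atom and Demazure characters. Nothing further is needed.
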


\subsection{LPP and augmented staircases}

We now resume the notation of \S \ \ref{Subsec_ASC}. For a fixed partition
$\Lambda$ in $\mathcal{P}_{n}$, we consider random matrices $\mathcal{A}%
_{\Lambda}$ with nonnegative random integer coefficients having zero entries
in each position $(i,j)$ such that $(i,j)\notin\Lambda$. Here again each
random variable $w_{i,j}$ for $(i,j)\in\Lambda$ follows a geometric
distribution of parameter $u_{i}v_{j}$. Let us define the random variable
$A_{\Lambda}=\mathrm{perc}\circ\mathcal{A}_{\Lambda}$. Then, by Theorems
\ref{Th_ASC} and \ref{Th_RSK_ASC}, we get the law of $A_{\Lambda}$.

\begin{theorem}
For any nonnegative integer $k$, we have%
\begin{multline*}
\mathbb{P}(A_{\Lambda}=k)=\\
T_{\Lambda}\sum_{(\mu_{1},\ldots,\mu_{m})\in\mathbb{Z}^{m}\mid\max(\mu
)=k}D_{\sigma(\Lambda,NW)}\overline{\mathrm{\kappa}}_{(\mu_{m},\ldots,\mu_{1}%
)}(u_{n},\ldots,u_{n-m+1})D_{\sigma(\Lambda,SE)}\mathrm{\kappa}_{(\mu_{1}%
,\ldots,\mu_{m})}(v_{1},\ldots,v_{m}),
\end{multline*}
where
\[
T_{\Lambda}=\prod_{(i,j)\in D_{\Lambda}}(1-u_{i}v_{j}).
\]

\end{theorem}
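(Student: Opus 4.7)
The plan is to combine Theorem~\ref{Th_RSK_ASC}, which gives a bijective description of $\psi(\mathcal{M}_{n,n}^{D_\Lambda})$, with the character identity of Theorem~\ref{Th_ASC}, and then to track the event $\{\mathrm{perc}=k\}$ through the bijection.

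First, I would write $\mathbb{P}(A_\Lambda = k) = T_\Lambda \sum (uv)^A$, the sum running over $A\in\mathcal{M}_{n,n}^{D_\Lambda}$ with $\mathrm{perc}(A)=k$. The prefactor $T_\Lambda$ comes from independence of the $w_{i,j}$ and the fact that $w_{i,j}=0$ almost surely when $(i,j)\notin D_\Lambda$. Next, by Assertion~4 of Theorem~\ref{Th_RSK}, $\mathrm{perc}(A)$ equals the length of the first row of $P(A)$ (equivalently of $Q(A)$), which in turn equals $\lambda_1$ where $\lambda$ is the common shape. Since $(P(A),Q(A))=\psi(A)$ and $(xy)^A = x^{\mathrm{wt}(P(A))} y^{\mathrm{wt}(Q(A))}$, one can rewrite the sum as a character sum over pairs $(P,Q)$ in the image of the bijection of Theorem~\ref{Th_RSK_ASC}.

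Second, I would use Theorem~\ref{Th_RSK_ASC} to decompose the image as the disjoint union, over $(\mu_1,\ldots,\mu_m)\in\mathbb{Z}_{\geq 0}^m$, of $\iota(\dot\Delta_{\sigma(\Lambda,NW)}\overline{\mathrm{B}}_{(\mu_m,\ldots,\mu_1)})\times \Delta_{\sigma(\Lambda,SE)}(\mathrm{B}_{(\mu_1,\ldots,\mu_m)})$. The crucial remark is that in each piece of this disjoint union the common shape $\lambda$ satisfies $\lambda = \mathrm{sort}(\mu)$, so that $\lambda_1 = \max(\mu)$; hence the condition $\mathrm{perc}(A)=k$ is exactly $\max(\mu)=k$. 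The summation over $A$ therefore decouples into a sum over $\mu$ with $\max(\mu)=k$ of a product of two independent character sums (one in the $u$-variables coming from the first factor, one in the $v$-variables coming from the second).

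Third, I would identify each of these two character sums with the corresponding Demazure operator applied to a Demazure atom or character. For the SE factor, equation~\eqref{Delta_iCharacter} combined with iterated application along the reduced word $\sigma(\Lambda,SE)=s_{j_1}\cdots s_{j_b}$ (see Remark~\ref{Rq_Skew}(2)) yields
\[
\sum_{b\in\Delta_{\sigma(\Lambda,SE)}(\mathrm{B}_{(\mu_1,\ldots,\mu_m)})} y^{\mathrm{wt}(b)} = D_{\sigma(\Lambda,SE)}\mathrm{\kappa}_{(\mu_1,\ldots,\mu_m)}(y_1,\ldots,y_m).
\]
For the NW factor, iterated application of Lemma~\ref{Lemma_Delta_i_atoms} and its refinement~\eqref{DeltaDot} along the reduced word $\sigma(\Lambda,NW)=s_{i_1}\cdots s_{i_a}$ gives $\mathrm{char}(\dot\Delta_{\sigma(\Lambda,NW)}\overline{\mathrm{B}}_{(\mu_m,\ldots,\mu_1)}) = D_{\sigma(\Lambda,NW)} \overline{\mathrm{\kappa}}_{(\mu_m,\ldots,\mu_1)}$; finally, applying $\iota$ and invoking~\eqref{CharInvolution} replaces $(x_1,\ldots,x_n)$ by $(x_n,\ldots,x_1)$, producing the stated specialization $(u_n,\ldots,u_{n-m+1})$ after restricting to the relevant variables.

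Combining these steps and substituting $x_i=u_i$, $y_j=v_j$ yields the announced formula. The main bookkeeping obstacle, and the only non-formal point, is the first observation above: that inside each piece of the Theorem~\ref{Th_RSK_ASC} disjoint union the shape is exactly $\mathrm{sort}(\mu)$, so that $\lambda_1 = \max(\mu)$; but this is immediate from the construction since $B_{(\mu_1,\ldots,\mu_m)}$ is a Demazure subcrystal of $B(\mathrm{sort}(\mu))$ and the operators $\Delta_i$, $\dot\Delta_i$ and $\iota$ all preserve the underlying highest-weight partition.
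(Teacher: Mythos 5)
Your proposal is correct and follows essentially the same route as the paper, which simply invokes Theorems~\ref{Th_ASC} and~\ref{Th_RSK_ASC} together with the identification $\mathrm{perc}(A)=\lambda_1$ from Assertion~4 of Theorem~\ref{Th_RSK}, exactly as you do. Your observation that each piece of the disjoint union in Theorem~\ref{Th_RSK_ASC} lives inside $B(\lambda)\times B(\lambda)$ with $\lambda=\mathrm{sort}(\mu)$, so that the event $\{\mathrm{perc}=k\}$ becomes $\{\max(\mu)=k\}$, is precisely the point the paper leaves implicit.
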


\section{Appendix}\label{Appe}

Let $(W,S)$ be a Coxeter system. Let $\mathfrak{M}_{W}$ be the attached
\emph{Coxeter monoid}, that is, the monoid with generators a copy $\mathbf{S}$
of $S$, the same braid relations as $(W,S)$, and relations $\mathbf{s}%
^{2}=\mathbf{s}$ for all $s\in S$ replacing the relations $s^{2}=1$ for all
$s\in S$. Here by \textit{braid relations} we mean the defining relations $st
\cdots ts\cdots$, where $t\neq s$ and both sides are strictly alternating
products of $s$ and $t$ with $m_{s,t}=m_{t,s}$ factors, where $m_{s,t}$ is the
entry of the Coxeter matrix. These relations first appeared in work of
Demazure~\cite[Section 5.6]{Dem2}, and the Coxeter monoid was first
investigated by Richardson and Springer~\cite[Section 3.10]{RS}. It is
well-known (and a consequence of Matsumoto's Lemma) that there is a canonical
set-theoretic bijection between $W$ and $\mathfrak{M}_{W}$: it just sends any
reduced decomposition of an element of $W$ or $\mathfrak{M}_{W}$ to the same decomposition.

Let $I\subseteq S$, $w\in W$ and $s_{1}s_{2}\cdots s_{k}$ a reduced
decomposition of $w$. Consider the subword $s_{i_{1}}s_{i_{2}}\cdots
s_{i_{\ell}}$ of $s_{1}s_{2}\cdots s_{k}$ consisting of those letters in
$s_{1}s_{2}\cdots s_{k}$ lying in $I$. Set
\[
M((s_{1},s_{2},\dots,s_{k}),I):=\mathbf{s}_{i_{1}}\mathbf{s}_{i_{2}}%
\cdots\mathbf{s}_{i_{\ell}}\in\mathfrak{M}_{W}.
\]

\begin{lemma}\label{lem_indep_algo}
\label{lem1} The element $M((s_{1}, s_{2},\dots, s_{k}),I)$ is independent of
the choice $s_{1} s_{2}\cdots s_{k}$ of reduced decomposition for $w$, and we
simply denote it by $M(w,I)$.
\end{lemma}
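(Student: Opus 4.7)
The plan is to reduce the statement to a local check using Matsumoto's theorem, which asserts that any two reduced decompositions of $w\in W$ are related by a finite sequence of braid moves (no $s^2=1$ moves are required, precisely because we stay inside the set of reduced decompositions). Consequently it suffices to verify that $M((s_1,\ldots,s_k),I)$ is unchanged when a single braid move is applied at some position inside the decomposition.

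A braid move is local: it replaces a factor $\underbrace{sts\cdots}_{m_{s,t}\text{ letters}}$ by $\underbrace{tst\cdots}_{m_{s,t}\text{ letters}}$, leaving all other letters untouched. The $I$-subwords of $s_1\cdots s_k$ before and after the move therefore agree outside of this replaced block, so it is enough to prove that the $I$-subwords of $\underbrace{sts\cdots}_{m_{s,t}}$ and $\underbrace{tst\cdots}_{m_{s,t}}$ represent the same element of $\mathfrak{M}_W$. I would split this into four cases according to which of $s,t$ lie in $I$.

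If both $s,t\in I$, the $I$-subwords are the full alternating strings of length $m_{s,t}$, and they are equal in $\mathfrak{M}_W$ by the defining braid relations of the Coxeter monoid. If neither $s$ nor $t$ lies in $I$, both $I$-subwords are empty, and the equality is trivial. The remaining two cases are symmetric, so consider $s\in I$, $t\notin I$: the $I$-subword of $\underbrace{sts\cdots}_{m_{s,t}}$ is $\mathbf{s}^{\lceil m_{s,t}/2\rceil}$, and that of $\underbrace{tst\cdots}_{m_{s,t}}$ is $\mathbf{s}^{\lfloor m_{s,t}/2\rfloor}$. Since $m_{s,t}\geq 2$, both exponents are at least $1$, and the defining relation $\mathbf{s}^{2}=\mathbf{s}$ in $\mathfrak{M}_W$ forces both expressions to collapse to $\mathbf{s}$.

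Having established invariance under a single braid move, iteration along a Matsumoto chain linking any two reduced decompositions of $w$ gives the claim. The only mildly delicate point is the mixed case where exactly one of $s,t$ belongs to $I$ and $m_{s,t}$ is odd, since then the numbers of retained letters on the two sides differ; but this is precisely where the monoid relation $\mathbf{s}^{2}=\mathbf{s}$ is used, and it handles both parities uniformly. No other subtleties arise, and no additional structural results beyond Matsumoto's theorem and the defining relations of $\mathfrak{M}_W$ are needed.
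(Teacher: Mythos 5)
Your proposal is correct and follows essentially the same route as the paper's proof of Lemma~\ref{lem_indep_algo}: reduce to a single braid move via Matsumoto's Lemma, then check the cases according to whether $s$, $t$ lie in $I$, using the braid relations of $\mathfrak{M}_W$ when both do and the idempotence $\mathbf{s}^2=\mathbf{s}$ to absorb the discrepancy of one retained copy in the mixed case. Nothing to add.
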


\begin{proof}
By Matsumoto's Lemma, we know that any two reduced decompositions of $w$ are
related by applying a sequence of braid relations. It therefore suffices to
show that applying a braid relation to a reduced word for $w$ does not change
the element of $\mathfrak{M}_{W}$ obtained by keeping only those letters in
the words which lie in $I$.

Let $s_{1}s_{2}\cdots s_{k}$ be the first reduced decomposition of $w$, and
$s_{1}^{\prime}s_{2}^{\prime}\cdots s_{k}^{\prime}$ be the one obtained after
application of a single braid relation. A braid relation involves only two
letters $s,t\in S$ ($s\neq t$). Then $s_{1}s_{2}\cdots s_{k}$ (as a word) is
of the form $xsts\cdots y$ while $s_{1}^{\prime}s_{2}^{\prime}\cdots
s_{k}^{\prime}$ is of the form $xtst\cdots y$. If $s,t\notin I$, then it is
clear that the two subwords of $s_{1}s_{2}\cdots s_{k}$ and $s_{1}^{\prime
}s_{2}^{\prime}\cdots s_{k}^{\prime}$ consisting of those letters which are
not in $I$ coincide, hence that $M((s_{1},s_{2},\dots,s_{k}),I)=M((s_{1}%
^{\prime},s_{2}^{\prime},\dots,s_{k}^{\prime}),I)$. If $s,t\in I$, then the
two subwords differ by a single braid relation, which holds in $\mathfrak{M}%
_{W}$, hence define the same element of $\mathfrak{M}_{W}$. Finally, if only
one letter among $s$ and $t$, say $s$, is in $I$, then since $t\notin I$, it
follows that the substring $sts\cdots$ contributes $k$ consecutive copies of
$s$ to the subword of $s_{1}s_{2}\cdots s_{k}$ obtained by deleting the letter
not in $S$, while $tst\cdots$ contributes $k$ or $k-1$ copies of $s$ to the
subword of $s_{1}^{\prime}s_{2}^{\prime}\cdots s_{k}^{\prime}$, depending on
whether $m_{s,t}$ is odd or even. Moreover, since $s$ appears in both sides of
the braid relation $sts\cdots=tst\cdots$, then at least one copy of $s$ is
contributed in each word. Thanks to the relation $\mathbf{s}^{2}=\mathbf{s}$,
these consecutive copies of $s$ get reduced to $\mathbf{s}$ in $\mathfrak{M}%
_{W}$, again yielding $M((s_{1},s_{2},\dots,s_{k}),I)=M((s_{1}^{\prime}%
,s_{2}^{\prime},\dots,s_{k}^{\prime}),I)$.
\end{proof}

We denote by $\leq$ the strong Bruhat order on $W$ (or $\mathfrak{M}_{W}$). We
recall that, for $u,v\in W$, the following three conditions are equivalent
(see~\cite[Corollary 2.2.3]{BB})

\begin{enumerate}
\item $u\leq v$,

\item There is a reduced decomposition of $v$ having a reduced decomposition
of $u$ as a subword,

\item Every reduced decomposition of $v$ has a reduced decomposition of $u$ as
a subword.
\end{enumerate}

\begin{lemma}
\label{lem2}

\begin{enumerate}
\item Let $\mathbf{s_{1}s_{2}\cdots s_{k}}$ be a word in the generators of
$\mathfrak{M}_{W}$ and $1\leq i_{1}<i_{2}<\dots<i_{\ell}\leq k$ such that
$s_{i_{1}}s_{i_{2}}\cdots s_{i_{\ell}}$ is a reduced decomposition of an
element $w$ of $W$. Let $\mathbf{s_{1}^{\prime}s_{2}^{\prime}\cdots
s_{m}^{\prime}}$ be a word obtained from $\mathbf{s_{1}s_{2}\cdots s_{k}}$ by
applying a single defining relation of $\mathfrak{M}_{W}$. Then there is a
sequence $1\leq j_{1}<j_{2}<\dots<j_{\ell}\leq m$ such that $s_{j_{1}}%
^{\prime}s_{j_{2}}^{\prime}\cdots s_{j_{\ell}}^{\prime}$ is a reduced
decomposition of $w$.

\item Let $\mathbf{s_{1}s_{2}\cdots s_{k}}$ and $\mathbf{s_{1}^{\prime}%
s_{2}^{\prime}\cdots s_{m}^{\prime}}$ be two (not necessarily reduced) words
for the same element $\mathbf{w}$ of $\mathfrak{M}_{W}$. Let $\Omega_{1}$
(resp. $\Omega_{2}$) be the set of elements of $W$ having a reduced
decomposition which is a subword of $\mathbf{s_{1}s_{2}\cdots s_{k}}$ (resp.
$\mathbf{s_{1}^{\prime}s_{2}^{\prime}\cdots s_{m}^{\prime}}$). Then
$\Omega_{1}=\Omega_{2}$. In particular, this set $\Omega(\mathbf{w})$ depends
only on $\mathbf{w}$, and we have
\[
\Omega(\mathbf{w})=\{x\in W~|~x\leq w\}.
\]

\end{enumerate}
\end{lemma}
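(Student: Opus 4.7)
The plan is to prove part (1) by a case analysis on which defining relation of $\mathfrak{M}_{W}$ is applied, and then deduce part (2) from part (1) by induction on the length of a sequence of single-relation steps connecting two words representing the same $\mathbf{w}$. The crucial preliminary observation used throughout part (1) is a length-additivity property: if $s_{i_{1}}\cdots s_{i_{\ell}}$ is a reduced decomposition of $w$ (so $\ell=\ell(w)$), then for any partition of this subword into consecutive blocks according to an interval of $\{1,\ldots,k\}$, each block is itself a reduced decomposition of its product and the lengths of the products add up to $\ell(w)$. This follows directly from $\ell(xy)\leq\ell(x)+\ell(y)$ together with the equality $\ell=\ell(w)$.

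The idempotent relation $\mathbf{s}^{2}\to\mathbf{s}$ (and its inverse) is easy: the two consecutive $\mathbf{s}$'s in the old word cannot both be chosen by the subword, since otherwise the reduced subword would contain the consecutive substring $ss$, contradicting its reducedness via $s^{2}=1$ in $W$. Hence at most one of the two is picked, and the correspondingly re-indexed subword in the new word has the same length and same product, so remains a reduced decomposition of $w$.

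The braid relation $\mathbf{sts\cdots}\to\mathbf{tst\cdots}$, with $m:=m_{s,t}$ factors on each side, is the main obstacle. By the preliminary observation applied to the three blocks (before, inside, and after the braid portion), the letters of the subword lying inside the braid portion form a reduced word in the dihedral subgroup $\langle s,t\rangle$ of some length $\beta\leq m$; this forces the picked positions to alternate in parity, and the product $u$ to be $sts\cdots$ or $tst\cdots$ of length $\beta$. The task is then to exhibit a reduced decomposition of the same $u$ as a length-$\beta$ subword of the new braid word $tst\cdots$, so that plugging it in between the unchanged outer blocks produces a reduced decomposition of $w$ of total length $\ell$. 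If $\beta<m$, the element $u$ has a unique reduced decomposition, and a direct dihedral-group computation (e.g., choose positions $a+2,a+3,\ldots,a+\beta+1$ to read off $sts\cdots$, or positions $a+1,a+2,\ldots,a+\beta$ to read off $tst\cdots$) exhibits it as required. If $\beta=m$, then $u$ is the longest element of the dihedral group and the new braid word itself is a reduced decomposition of $u$, so one simply takes all $m$ positions.

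For part (2), since $\mathfrak{M}_{W}$ is defined by those relations, any two words representing $\mathbf{w}$ are linked by a finite sequence of single-relation steps; iterating part (1) yields $\Omega_{1}\subseteq\Omega_{2}$, and applying the same argument to the reverse sequence gives equality, so $\Omega(\mathbf{w})$ depends only on $\mathbf{w}$. To identify $\Omega(\mathbf{w})$ with $\{x\in W\mid x\leq w\}$, it suffices to choose any reduced decomposition of $w$ in $W$: under the canonical bijection $W\leftrightarrow\mathfrak{M}_{W}$ this is a reduced word for $\mathbf{w}$, and the subword characterization of the strong Bruhat order recalled just before the lemma identifies $\Omega$ of this particular word with $\{x\in W\mid x\leq w\}$.
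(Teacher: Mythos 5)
Your proof is correct and follows essentially the same route as the paper's: the idempotent relation is handled by noting that at most one of the two copies of $\mathbf{s}$ can be selected, and the braid relation by observing that the selected block is a reduced (hence alternating) word $u$ in the dihedral group, then splitting into the cases $\ell(u)<m_{s,t}$ (unique reduced decomposition, found again as a subword of the other side) and $\ell(u)=m_{s,t}$ (replace the whole braid word), with part (2) deduced by iterating part (1) and invoking the subword characterization of the Bruhat order. Your explicit length-additivity observation and the explicit choice of positions in the new braid word only make more precise steps the paper leaves implicit.
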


\begin{proof}
The second point is an immediate corollary of the first one; the last
statement is used by taking as word $\mathbf{s_{1}s_{2}\cdots s_{k}}$ any
reduced decomposition of $\mathbf{w}$.

Let us show the first point. The result is clear if the relation which is
applied to the word $\mathbf{s_{1} s_{2} \cdots s_{k}}$ is $\mathbf{s}%
\rightarrow\mathbf{s}^{2}$ or $\mathbf{s}^{2}\rightarrow\mathbf{s}$, since in
the case where we have to consecutive copies of $\mathbf{s}$ in the first or
the last word, then at most one can contribute to a reduced decomposition as
$ss$ is not reduced in $W$. Hence $s_{i_{1}} s_{i_{2}}\cdots s_{i_{\ell}}$
also appears as a reduced word of $\mathbf{s_{1}^{\prime}s_{2}^{\prime}\cdots
s_{m}^{\prime}}$ in this case. Hence assume that the relation which is applied
is a braid relation $\mathbf{w_{1}}=\mathbf{s} \mathbf{t} \cdots
\rightarrow\mathbf{t} \mathbf{s} \cdots=\mathbf{w_{2}}$. That is, we have
$k=m$ and (as words) $\mathbf{s_{1} s_{2} \cdots s_{k}}=\mathbf{s_{1} s_{2}
\cdots s_{i} w_{1} s_{j} s_{j+1}\cdots s_{k}}$ while $\mathbf{s_{1}^{\prime
}s_{2}^{\prime}\cdots s_{k}^{\prime}}=\mathbf{s_{1} s_{2} \cdots s_{i} w_{2}
s_{j} s_{j+1}\cdots s_{k}}$.

Denote by $p$ the number $\ell(\mathbf{w_{1}})$ of factors in either side of
the braid relation. The subword $\mathbf{u}$ of $\mathbf{w_{1}}$ which
contributes to the reduced word $s_{i_{1}} s_{i_{2}}\cdots s_{i_{k}}$ is
necessarily and alternating product of $s$ and $t$, otherwise it is not
reduced. Moreover, it contributes a subword $u$ of $s_{i_{1}} s_{i_{2}}\cdots
s_{i_{k}}$ which is made of consecutive letters, the letters before that
subword (resp. after that subword) coming from $\mathbf{s_{1} s_{2} \cdots
s_{i}}$ (resp. $\mathbf{s_{j} s_{j+1} \cdots s_{k}}$). If $\ell(u)<p$, then
$\mathbf{u}$ has a unique reduced decomposition, and $\mathbf{w_{2}}$ also has
$\mathbf{u}$ as a subword. Hence the claim holds true in this case. If
$\ell(u)=p$, then the whole left side $\mathbf{w_{1}}$ of the braid relation
is contributed as a consecutive subword of $s_{i_{1}} s_{i_{2}}\cdots
s_{i_{k}}$. Replacing that subword by the right side $ts\cdots$ of the braid
relation yields the required subword of $\mathbf{s_{1}^{\prime}s_{2}^{\prime
}\cdots s_{k}^{\prime}}$. It stays reduced as it is just obtained from a
reduced decomposition by applying a braid relation.
\end{proof}

\begin{proposition}
Let $w\in W$. The set $w_{I}^{\leq}:=\{x\in W~|~x\in W_{I}~\text{and}~x\leq
w\}$ is equal to $\{x\in W~|~x\leq M(w,I)\}$. In particular, it has a unique
maximal element for $\leq$, given by $M(w,I)$.
\end{proposition}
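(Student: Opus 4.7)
The plan is to reduce both inclusions to Lemma~\ref{lem2}(2), which relates the Bruhat ideal below an element of $W$ to the set of elements of $W$ admitting a reduced decomposition as a subword of any (not necessarily reduced) expression of that element in $\mathfrak{M}_W$. First, I fix a reduced decomposition $s_1 s_2 \cdots s_k$ of $w$ and let $\mathbf{v} := \mathbf{s_{i_1} s_{i_2} \cdots s_{i_\ell}}$ denote the subword obtained by keeping exactly those $s_{i_j}$ that lie in $I$. By the definition preceding Lemma~\ref{lem1}, the word $\mathbf{v}$ is an expression (possibly non-reduced) of $M(w,I) \in \mathfrak{M}_W$, and crucially $\mathbf{v}$ is itself a subword of $s_1 s_2 \cdots s_k$.

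For the inclusion $\{x \leq M(w,I)\} \subseteq w_I^{\leq}$, I apply Lemma~\ref{lem2}(2) to the expression $\mathbf{v}$: any $x \leq M(w,I)$ admits a reduced decomposition that is a subword of $\mathbf{v}$. All letters of such a subword lie in $I$, so $x \in W_I$; and the same subword of $\mathbf{v}$ is a subword of $s_1 s_2 \cdots s_k$, so $x \leq w$ by the subword characterization of the strong Bruhat order recalled before Lemma~\ref{lem2}.

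The reverse inclusion $w_I^{\leq} \subseteq \{x \leq M(w,I)\}$ relies on the observation that every reduced decomposition of an element of $W_I$ uses only letters in $I$. This is a standard consequence of Matsumoto's lemma: any two reduced decompositions of a given element are connected by braid moves, and a braid move replaces a factor $st s \cdots$ by $tst\cdots$ without introducing any new generator, hence preserves the property of using only letters from $I$. Granting this, let $x \in W_I$ with $x \leq w$: by the subword characterization, $x$ has a reduced decomposition appearing as a subword of $s_1 s_2 \cdots s_k$; since $x \in W_I$, every letter of this subword lies in $I$, so it is in fact a subword of $\mathbf{v}$. A second application of Lemma~\ref{lem2}(2) then yields $x \leq M(w,I)$.

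The final assertion is immediate: $\{x \leq M(w,I)\}$ has $M(w,I)$ as its unique maximal element for $\leq$, and $M(w,I)$ itself lies in $W_I$ since $\mathbf{v}$ uses only letters in $I$, so the same holds for $w_I^{\leq}$. The only nontrivial step is the Matsumoto-lemma observation invoked in the reverse inclusion; everything else is a direct bookkeeping around Lemma~\ref{lem2}(2) and the subword characterization of Bruhat order.
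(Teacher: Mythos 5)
Your proof is correct and follows essentially the same route as the paper's: both arguments fix a reduced decomposition of $w$, pass to the subword $\mathbf{v}$ of letters in $I$ (a word for $M(w,I)$ in $\mathfrak{M}_W$), and reduce everything to Lemma~\ref{lem2}~(2) together with the subword characterization of the Bruhat order. The only cosmetic difference is that you handle the inclusion $\{x\leq M(w,I)\}\subseteq w_I^{\leq}$ by applying Lemma~\ref{lem2}~(2) directly to $\mathbf{v}$, whereas the paper phrases it as first establishing $M(w,I)\leq w$; your version makes explicit the support argument (all reduced words of an element of $W_I$ use only letters of $I$) that the paper leaves implicit.
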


\begin{proof}
Let $s_{1} s_{2} \cdots s_{k}$ be a reduced decomposition of $w$.

Let $x\in w_{I}^{\leq}$. Since $x\leq w$, there is a subword $s_{j_{1}}
s_{j_{2}}\cdots s_{j_{m}}$, $1\leq j_{1} <j_{2} < \dots< j_{m} \leq k$ which
is a reduced decomposition of $x$. Since $x\in W_{I}$, all the letters of
$s_{j_{1}}s_{j_{2}}\cdots s_{j_{m}}$ lie in $I$. In particular, the reduced
decomposition $s_{j_{1}}s_{j_{2}}\cdots s_{j_{m}}$ is a subword of the subword
$s_{i_{1}}s_{i_{2}}\cdots s_{i_{\ell}}$ of $s_{1} s_{2}\cdots s_{k}$
consisting of those letters which lie in $I$. Putting Lemmas~\ref{lem1}
and~\ref{lem2}~(2) together we get that $x\leq M(w,I)$.

To conclude the proof, it therefore suffices to see that $M(w,I)\leq w$. By
Lemma~\ref{lem2}~(2), we know that any (not necessarily reduced) word for
$M(w,I)$ in $\mathfrak{M}_{W}$ has a subword which is a reduced word for
$M(w,I)$, as this property is independent of the chosen word, and it holds if
we take any reduced decomposition of $M(w,I)$ in $\mathfrak{M}_{W}$. But by
definition of $M(w,I)$, there is a subword if $s_{1}s_{2}\cdots s_{k}$ which
is a (not necessarily reduced) decomposition of $M(w,I)$ in $\mathfrak{M}_{W}%
$. Hence $s_{1}s_{2}\cdots s_{k}$ must have a reduced decomposition of
$M(w,I)$ appearing as a subword.
\end{proof}

\begin{example}
Let $W$ be of type $A_{3}$ and let $w=s_{1} s_{2} s_{3} s_{1} s_{2}$. The list
of reduced words for $w$ is given by

\begin{itemize}
\item $s_{1} s_{2} s_{3} s_{1} s_{2}, $

\item $s_{1} s_{2} s_{1} s_{3} s_{2},$

\item $s_{2} s_{1} s_{2} s_{3} s_{2},$

\item $s_{2} s_{1} s_{3} s_{2} s_{3},$

\item $s_{2} s_{3} s_{1} s_{2} s_{3}$.
\end{itemize}

Extracting the subword with letters in $I$ from every such decomposition yields

\begin{itemize}
\item $s_{1} s_{2} s_{1} s_{2}, $

\item $s_{1} s_{2} s_{1} s_{2},$

\item $s_{2} s_{1} s_{2} s_{2},$

\item $s_{2} s_{1} s_{2} ,$

\item $s_{2} s_{1} s_{2} $.
\end{itemize}

In $\mathfrak{M}_{W}$ we get

\begin{itemize}
\item $\mathbf{s_{1} s_{2} s_{1} s_{2}}=\mathbf{s_{1} s_{1} s_{2} s_{1}%
}=\mathbf{s_{1} s_{2} s_{1}}, $

\item $\mathbf{s_{1} s_{2} s_{1} s_{2}}=\mathbf{s_{1} s_{1} s_{2} s_{1}%
}=\mathbf{s_{1} s_{2} s_{1}}, $

\item $\mathbf{s_{2} s_{1} s_{2} s_{2}}=\mathbf{s_{2} s_{1} s_{2}%
}=\mathbf{s_{1} s_{2} s_{1}},$

\item $\mathbf{s_{2} s_{1} s_{2}}=\mathbf{s_{1} s_{2} s_{1}} ,$

\item $\mathbf{s_{2} s_{1} s_{2}}=\mathbf{s_{1} s_{2} s_{1}}$.
\end{itemize}

Note that the obtained is element is distinct from $w_{I}$, the element from
the canonical decomposition $w=w^{I} w_{I}$, which is given here by $s_{1}
s_{2}$.
\end{example}

\begin{remark}\label{rem_prod_cox_mon}
\label{rmqadd} It is a consequence of the definition of $M(w,I)$ and
Lemma~\ref{lem1} that if $u,v\in W$ with $\ell(uv)=\ell(u)+\ell(v)$, then
$M(uv,I)=M(u,I)M(v,I)$ (where the product is taken in $\mathfrak{M}_{W}$).
\end{remark}

\bigskip

\noindent Olga Azenhas: University of Coimbra, CMUC, Department of Mathematics.

\noindent{oazenhas@mat.uc.pt}

\bigskip

\noindent Thomas Gobet: Institut Denis Poisson Tours.

\noindent Universit\'{e} de Tours Parc de Grandmont, 37200 Tours, France.

\noindent{thomas.gobet@lmpt.univ-tours.fr}

\bigskip

\noindent C\'{e}dric Lecouvey: Institut Denis Poisson Tours.

\noindent Universit\'{e} de Tours Parc de Grandmont, 37200 Tours, France.

\noindent{cedric.lecouvey@lmpt.univ-tours.fr}

\bigskip

\end{document}